\newtheorem*{theorem*}{Theorem}
\newtheorem{maintheorem}{Theorem}[section]
\newtheorem{maincorollary}[maintheorem]{Corollary}
\newenvironment{manualtheorem}[1]{%
  \IfBlankTF{#1}
    {}
    {}%
  \manualtheoreminner
}{\endmanualtheoreminner}
\newtheorem{theorem}{Theorem}[section]
\newtheorem{lemma}[theorem]{Lemma}
\newtheorem{proposition}[theorem]{Proposition}
\newtheorem{corollary}[theorem]{Corollary} 
\theoremstyle{definition}
\newtheorem{definition}[theorem]{Definition}
\newtheorem{example}[theorem]{Example}
\newtheorem{examples}[theorem]{Examples}
\newtheorem{remark}[theorem]{Remark}
\newtheorem*{remark*}{Remark}
\newtheoremstyle{myitemstyle}						
	{}			
	{}			
	{}			
	{}			
	{}			
	{.}			
	{ }			
	{}			
\theoremstyle{myitemstyle}
\newtheorem{myitemthm}{}
\newcommand{\hooklongrightarrow}{\lhook\joinrel\longrightarrow}
\newcommand{\R}{\mathbb{R}}
\newcommand{\Rbar}{\overline{\mathbb{R}}}
\newcommand{\Z}{\mathbb{Z}}
\newcommand{\ZZ}{\mathbb{Z}}
\newcommand{\C}{\mathbb{C}}
\newcommand{\N}{\mathbb{N}}
\renewcommand{\P}{\mathbb{P}}
\newcommand{\A}{\mathbb{A}}
\newcommand{\G}{\mathbb{G}}
\newcommand{\GG}{\mathbb{G}}
\newcommand{\bbX}{\mathbb{X}}
\newcommand{\bbY}{\mathbb{Y}}
\newcommand{\bbG}{\mathbb{G}}
\newcommand{\bbH}{\mathbb{H}}
\newcommand{\bft}{t}
\newcommand{\bfg}{g_0}
\newcommand{\bfl}{\ell_0}
\newcommand{\bfh}{h_0}
\newcommand{\Deltabar}{\overline{\Sigma}}
\newcommand{\Sigmabar}{\overline{\Sigma}}
\newcommand{\sigmabar}{\overline{\sigma}}
\newcommand{\calA}{\mathcal{A}}
\newcommand{\calB}{\mathcal{B}}
\newcommand{\calD}{\mathcal{D}}
\newcommand{\calE}{\mathcal{E}}
\newcommand{\calF}{\mathcal{F}}
\newcommand{\calH}{\mathcal{H}}
\newcommand{\calM}{\mathcal{M}}
\newcommand{\calN}{\mathcal{N}}
\newcommand{\calO}{\mathcal{O}}
\newcommand{\calP}{\mathcal{P}}
\newcommand{\calS}{\mathcal{S}}
\newcommand{\calU}{\mathcal{U}}
\newcommand{\calV}{\mathcal{V}}
\newcommand{\calX}{\mathcal{X}}
\newcommand{\frakS}{\mathfrak{S}}
\newcommand{\frakF}{\mathfrak{F}}
\newcommand{\fD}{\mathfrak{D}}
\newcommand{\bfp}{\mathbf{p}}
\newcommand{\bfq}{\mathbf{q}}
\newcommand{\gnor}{\vert \! \vert \cdot\vert\! \vert}
\newcommand{\nor}[1]{\vert\! \vert {#1} \vert \! \vert}
\DeclareMathOperator{\Spec}{Spec}
\DeclareMathOperator{\Hom}{Hom}
\DeclareMathOperator{\val}{val}
\DeclareMathOperator{\won}{won}
\DeclareMathOperator{\Trop}{\trop}
\DeclareMathOperator{\id}{id}
\DeclareMathOperator{\Span}{Span}
\DeclareMathOperator{\GL}{GL}
\DeclareMathOperator{\PGL}{PGL}
\DeclareMathOperator{\trop}{trop}
\DeclareMathOperator{\SL}{SL}
\DeclareMathOperator{\coker}{coker}
\DeclareMathOperator{\ev}{ev}
\DeclareMathOperator{\build}{\textrm{\faHome}}
\DeclareMathOperator{\vspan}{span}
\DeclareMathOperator{\Star}{star}
\DeclareMathOperator{\an}{an}
\DeclareMathOperator{\swap}{swap}
\DeclareMathOperator{\opp}{opp}
\DeclareMathOperator{\gp}{gp}
\DeclareMathOperator{\quot}{quot}
\DeclareMathOperator{\Berk}{Berk}
\DeclareMathOperator{\weak}{weak}
\DeclareMathOperator{\rig}{rig}
\title{Towards the tropicalization of reductive groups}%
\date{}
\author{Desmond Coles}
\address{The University of Texas at Austin,
Department of Mathematics,
Austin, TX 78712,
USA
}
\email{dcoles@utexas.edu}
\author{Martin Ulirsch}
\address{Institut f\"ur Mathematik, Goethe--Universit\"at Frankfurt,
60325 Frankfurt am Main, Germany}
\email{ulirsch@math.uni-frankfurt.de}
\begin{document}

\begin{abstract}
Let $G$ be a connected reductive algebraic group over an algebraically closed field of characteristic zero carrying the trivial valuation. In this article we discuss two candidates for what could be the tropicalization of $G$. 

Our first suggestion is the extended affine building associated to $G$. This perspective makes makes use of Berkovich's embedding of the extended affine building into the Berkovich analytic space $G^{\an}$ and expands on work of Mumford by associating a toroidal bordification of $G$ to the choice of stacky fan in the building. We show that the natural retraction onto the building is compatible with the tropicalization map associated to a toroidal bordification. 

Our second suggestion is a Weyl chamber of $G$, a special instance of spherical tropicalization, where we think of $G$ as a spherical $G\times G$-variety with respect to left-right-multiplication. We show that the spherical tropicalization map may be identified with the toroidal tropicalization map associated to a wonderful compactification of $G$. This map also has a moduli-theoretic interpretation expanding on the compactifications of $G$ as moduli spaces of framed $\mathbb{G}_m$-equivariant principal bundles on chains of projective lines introduced by Martens and Thaddeus. 
\end{abstract}

\maketitle

\setcounter{tocdepth}{1}
\tableofcontents


\newpage

\section*{Introduction}

Let $k$ be an algebraically closed field of characteristic 0 equipped with the trivial valuation. Let $T$ be an algebraic torus over $k$ and $N$ its cocharacter lattice. We denote by $T^{\trop}=N_\R=N\otimes \R$ the \textit{tropicalization} of $T$. Let $M=\Hom(N,\Z)$ be the character lattice of $T$. 
Denote by $T^{\an}$ the Berkovich analytification of $T$. There is a natural continuous and proper surjective tropicalization map $\trop\colon T^{\an}\rightarrow T^{\trop}$ that restricts to 
\begin{equation*}
T(K)\ni \gamma\longmapsto \big(m\mapsto -\log \nor{\chi^m(\gamma)}_K\big)\in \Hom(M,\R)=T^{\trop}
\end{equation*}
for every non-Archimedean extension $K$ of $k$. 

The tropicalization map admits a canonical section $T^{\trop}\hookrightarrow T^{\an}$ and  
can be extended to toric varieties. This is a starting point for what one might refer to as \emph{embedded tropical geometry}, where one studies the geometry of projections of closed subvarieties of $T$ under the tropicalization map (see \cite{MaclaganSturmfels} and \cite{Gubler_guide} for introductions to (embedded) tropical geometry).

Algebraic tori are a particular example of reductive groups, which leads us to ask the following question: Given a (connected) reductive algebraic group $G$, what does it mean to tropicalize $G$? 

This article aims to give two (partial) answers to this question. Our first answer procceds by exploring the connections between a generalization of Mumford's toroidal bordifications of $G$ coming from \cite[Section 4.2]{KKMSD_toroidal} and their associated toroidal tropicalization maps on the one side as well as the geometry of Bruhat-Tits buildings and its realization within Berkovich analytic geometry on the other side (see e.g. \cite[Section 5.4]{Berkovich_book} as well as  \cite{RemyThuillierWernerI, RemyThuillierWernerII, RemyThuillierWerner_wonderful, RemyThuillierWener_Intrinsic, BKKUV} for more details on the latter). Our second answer focuses on the interaction between the spherical tropicalization of $G$ (as e.g.\ in \cite{TevelevVogiannou, Nashpreprint, Coles}) and the wonderful compactification of $G$ (see \cite{deConciniProcesi, BrionKumar}).

\subsection*{The extended affine building} Let $G$ be a connected reductive group over $k$. Then we define the \emph{(extended affine) building} of $G$ to be the colimit of sets
\begin{equation}\label{eq_Gbuild}
G^{\build}:=\varinjlim_{T\le G} T^{\trop}.
\end{equation}
The building $G^{\build}$ can be endowed with two a priori different topologies. The first is the \emph{weak topology}: A subset is closed if and only if its intersection with every $T^{\trop}$ is closed. We write $G^{\build}_{\weak}$ to denote $G^{\build}$ with the weak topology. The second topology is a bit more coarse. As explained in \cite[Section 5.4]{Berkovich_book} and Section \ref{subsec_BTbuildings} below, the extended affine building $G^{\build}$ admits a natural injection into the Berkovich analytification $G^{\an}$ of $G$ (see Section \ref{subsec_BTbuildings} below for details). The \emph{Berkovich topology} on $G^{\an}$ is induced from the one on $G^{\an}$. We write $G^{\build}_{\Berk}$ for the resulting topological space. In this notation the identity defines a continuous bijection 
\begin{equation*}
G^{\build}_{\weak}\xrightarrow{\id_{G^{\build}}} G^{\build}_{\Berk}.
\end{equation*} Note that this map is not in general a homeomorphism.

\begin{remark*}\label{Remark_DefinitionOfTheBuilding}
    Note that our definition of the extended affine building is slightly different from what is usually found in the literature on Bruhat-Tits buildings. 
    \begin{itemize}
    \item When $G$ is semisimple, one usually considers the  \emph{spherical building} associated to $G$. In this case, the space $G^{\build}$ is the open cone over the spherical building. 
    \item For a more general reductive group, many authors often only consider the building associated its semisimplification $G/Z(G)$. In this convention the building associated to an algebraic torus would simply be a point.  
    \end{itemize}
We decided against both of these conventions and in favor of our definition \eqref{eq_Gbuild} in order to emphasize the similarity to the well-known tropicalization of algebraic tori. Our convention aligns with the perspective taken in \cite{KMBundles} and \cite{KSBundles} where toric principal bundles are classified in terms of piecewise linear maps from fans to extended affine buildings.
\end{remark*}

When $G=\textrm{GL}_n$, we may identify $G^{\build}$ with the collection of all norms on the vector space $k^n$. When $G=\textrm{PGL}_n$, then $G^{\build}$ is the collection of norms on $k^{n+1}$, equivalent up to homothety. The natural topology on these spaces is the coarsest topology such that evaluation at any element of $V$ is continuous, which is even coarser than the Berkovich topology. We refer the reader to Section \ref{section_examples} below for further details as well as to \cite{BKKUV} and \cite{DressTerhalle} for a recent and a not so recent exposition of this story and its connection to tropical linear spaces/valuated matroids. We expect that a generalization of this perspective beyond the $A_n$ case can be obtained by employing the Tannakian formalism  introduced in \cite{Ziegler_Tannakianbuilding}.

\subsection*{Extended buildings and toroidal bordifications} The central fact of toric geometry (as e.g. introduced in \cite[Section I]{KKMSD_toroidal}) is that to every rational polyhedral fan $\Delta$ in $N_\R$ we may associate a toric variety $X=X(\Delta)$ whose $T$-orbits are in natural order-reversing one-to-one correspondence with the cones in $\Delta$ (and that, vice versa, every toric variety arises in this fashion). This construction generalizes to \emph{toric stacks} that are constructed from so-called \emph{stacky fans} (see \cite{GeraschenkoSatrianoI, GeraschenkoSatrianoII} for an overview on the different approaches to this construction).

 In Section \ref{section_stackyfans} below we shall see how to define a notion of a \emph{stacky fan} $(\Delta,\Phi)$ in $G^{\build}$, consisting of a rational polyhedral fan $\Delta$ in $G^{\build}$ and a compatible collection of Kummer homomorphisms $\phi_\sigma\colon S_\sigma\rightarrow \widetilde{S}_\sigma$ for all $\sigma\in\Delta$. In Section \ref{section_toroidalbord} we expand on \cite[Section IV.2]{KKMSD_toroidal} and explain how to associate to $(\Delta,\Phi)$ a natural toroidal bordification $\calX_G(\Delta,\Phi)$ of $G$ that generalizes the construction of a toric variety (or more generally a toric stack) from its (stacky) rational polyhedral fan. 

\begin{maintheorem}\label{mainthm_stackyfan=toroidalbordification}
Let $G$ be a reductive group. For every stacky rational polyhedral fan $(\Delta,\Phi)$ in $G^{\build}$ there is a separated Deligne-Mumford stack $\calX_G(\Delta)$ locally of finite type over $k$ that contains $G$ as a dense open subset such that the following properties hold:
\begin{enumerate}[(i)]
\item The embedding $G\hookrightarrow \calX_G(\Delta)$ is a toroidal embedding;
\item there is a natural order-reversing one-to-one correspondence between the toroidal strata of $\calX_G(\Delta)$ and the cones in $\Delta$;
\item the Deligne-Mumford stack $\calX_G(\Delta)$ is smooth if and only if every cone in $\Delta$ is generated by a basis of in the dual lattice to $\widetilde{S}_\sigma$; 
\item for a one-parameter subgroup $u\colon \GG_m\hookrightarrow G$ we have
\begin{equation*}
\lim_{t\rightarrow 0}u(t)\in \calX_G(\Delta)
\end{equation*}
if and only if the $u\in \Delta$;
\item the multiplication $G\times G\rightarrow G$ extends to a natural left operation of $G$ on $\calX_G(\Delta)$; and,
\item if the operation of $G$ on $G^{\build}$ lifts to an operation of $G$ on the stacky fan $\Delta$, then the conjugation operation of $G$ on itself extends to a set-theoretic operation of $G(k)$ on $\calX_G(\Delta)$.
\end{enumerate}
\end{maintheorem}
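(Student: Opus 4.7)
The plan is to adapt Mumford's toroidal compactification construction \cite[Section IV.2]{KKMSD_toroidal}: I assemble local affine toric stack charts indexed by the cones of $\Delta$ and glue them. For each $\sigma \in \Delta$, by the definition of $G^{\build}$ as a colimit there is a maximal torus $T_\sigma \subseteq G$ whose apartment $T_\sigma^{\trop}$ contains $\sigma$. The Kummer datum $\phi_\sigma \colon S_\sigma \to \widetilde{S}_\sigma$ together with $\sigma$ determines an affine toric stack $V_\sigma$ in the sense of \cite{GeraschenkoSatrianoI}. Picking a Borel $B_\sigma \supseteq T_\sigma$ with unipotent radical $U_\sigma$ and opposite unipotent $U_\sigma^{-}$, I replace the $T_\sigma$-factor inside the big cell $U_\sigma^{-} \cdot T_\sigma \cdot U_\sigma \subseteq G$ by $V_\sigma$ and enlarge by the $G$-action to obtain an open chart $W_\sigma$ of the would-be bordification. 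For face inclusions $\tau \preceq \sigma$ inside a common apartment, compatibility of the Kummer data produces a canonical open immersion $W_\tau \hookrightarrow W_\sigma$ exactly as in the classical toric picture. To glue charts coming from different apartments I invoke the fact underlying the colimit \eqref{eq_Gbuild}: the pointwise stabilizer of a face in $G(k)$ acts transitively on the maximal tori whose apartments contain that face, yielding canonical identifications of the corresponding local models.

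The gluing produces a stack $\calX_G(\Delta,\Phi)$ locally of finite type over $k$, and separatedness follows from the valuative criterion by reducing apartment-by-apartment to the classical toric case. Property (i) then holds because the local charts are étale locally toric stacks, so $G \hookrightarrow \calX_G(\Delta,\Phi)$ is toroidal. Property (ii) is inherited from the toric strata in each $V_\sigma$ via the gluing. Property (iii) is the classical smoothness criterion for stacky cones applied chart by chart. Property (iv) reduces to the standard one-parameter limit criterion for toric varieties/stacks, applied in the apartment containing $u$. Property (v) is built into the construction, since the charts $W_\sigma$ are $G$-stable unions. Property (vi) follows because, when the action of $G$ on $G^{\build}$ preserves $\Delta$, conjugation permutes the charts $W_\sigma$ compatibly and descends to a set-theoretic action of $G(k)$ on $\calX_G(\Delta,\Phi)$.

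The substantive new input beyond Mumford's original case, and hence the main obstacle, will be verifying that the gluing of charts across apartments is well defined and yields a separated Deligne--Mumford stack, that is, that the local models $W_\sigma$ are canonically isomorphic independently of the choices of maximal torus and Borel. This amounts to a compatibility statement for the Bruhat--Tits--type data encoded in the colimit \eqref{eq_Gbuild} and is where the construction genuinely goes beyond the toric situation.
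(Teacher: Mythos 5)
Your proposal follows the same Mumford-inspired blueprint as the paper: take the affine toric (stacky) model $\calU_{(\sigma,\phi_\sigma)}$ for each cone, lift it to a partial bordification of $G$ using a Borel/torus, and glue along common faces. The itemized verifications of (i)--(vi) that you sketch are, item by item, the same reductions the paper uses (toric charts for the toroidal structure, orbit-cone correspondence for the strata, the toric one-parameter-limit criterion applied apartment-wise, etc.). So the overall route is not genuinely different.

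Where there is a genuine gap is precisely the point you flag as the ``main obstacle'': independence of the local model from the choice of maximal torus and Borel, and coherent gluing across apartments. Your proposed mechanism is to invoke transitivity of the pointwise stabilizer of a face on the apartments containing it and declare this gives ``canonical identifications.'' But transitivity of a group action does not, by itself, yield a canonical identification: two stabilizing elements $g, g'$ taking $T^{\trop}$ to $(T')^{\trop}$ give two a priori different isomorphisms of local models, and you would have to verify a cocycle condition (that the resulting identification is independent of the chosen $g$) before the gluing is well defined. You never verify this, so the construction is not actually shown to exist. The paper avoids this entirely by reformulating the local model as the twisted product
\begin{equation*}
\calX_G(\sigma,\phi_\sigma)=\calU_{(\sigma,\phi_\sigma)}\times^T G \ ,
\end{equation*}
which is manifestly independent of the choice of Borel, and by proving the key reduction lemma (Lemma \ref{lemma_Borel}): for any subtorus $T'\leq T$ with $\sigma\subseteq (T')^{\trop}$, one has $\calU_{(\sigma,\phi_\sigma)}\times^T G\cong \calU'_{(\sigma,\phi_\sigma)}\times^{T'}G$ canonically. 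Since any common face $\tau=\sigma\cap\sigma'$ of cones in different apartments lies in $(T'')^{\trop}$ for the subtorus $T''=(T\cap T')^{\circ}$, this lemma makes $\calX_G(\tau,\phi_\tau)$ \emph{literally} the same open substack of $\calX_G(\sigma,\phi_\sigma)$ and of $\calX_G(\sigma',\phi_{\sigma'})$, so the colimit in Definition \ref{Defintion_ToroidalBordification} is automatically well defined and no cocycle argument is needed. That reformulation and Lemma \ref{lemma_Borel} are exactly the ingredients your proposal is missing.
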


Theorem \ref{mainthm_stackyfan=toroidalbordification} is a generalization of a construction of Mumford in \cite[Section IV.2]{KKMSD_toroidal} that proposes a (non-stacky) toroidal bordification of semisimple algebraic groups (in an effort to find an algebro-geometric interpretation of spherical buildings). The central idea of his and subsequently our construction is to first $T$-equivariantly bordify each torus $T$ of $G$ separately by the $T$-toric stack $\calX_T(\Delta_T)$ associated to the stacky fan $\Delta_T=\Delta\cap T^{\trop}$ in $T^{\trop}$. We then lift this bordification to a bordification $\calX_T(\Delta_T)\times^T G$ of $G$ and glue by forming the colimit
\begin{equation*}
\calX_G(\Delta)=\varinjlim_{T\le G} \big(\calX_T(\Delta_T)\times^T G\big) \ .
\end{equation*}

Our insistence on including the stacky perspective comes from the fact that for example in the case $G=\SL_n$ there is a minimal choice for $\Delta$ which is only a simplicial fan, but not a smooth one when $n\geq 3$. The resulting bordification will only be at the same time smooth and canonical if we allow for a stacky resolution of the resulting quotient singularities (see Section \ref{subsection_PGLSL} and, in particular, Figure \ref{fig_thefigure} below). Our approach via stacky resolutions is inspired by the analogous problem in the case of  wonderful compactifications of $G$ that was observed and resolved by Martens and Thaddeus in \cite{MartensThaddeus}.

\subsection*{Toroidal bordifications and tropicalization} There are two ways of associating a natural tropicalization map to $G$ that lands in the affine extended building $G^{\build}$.

For the \textbf{first approach} we recall that, for any torus $T$ in $G$, there is a continuous map  $\trop_T\colon T^{\an}\rightarrow T^{\trop}\subseteq G^{\build}_{\Berk}$ given by the tropicalization map for $T$. Thus, we have a continuous tropicalization map
\[
\trop_{\build}\colon \bigcup_{T\leq G}T^{\an}\rightarrow G^{\build}_{\Berk}.
\]
We may enlarge the domain of this map as follows: 
\begin{itemize}
\item Any two maximal tori in $G$ are conjugate by some element of $G(k)$, and thus if we fix a maximal torus $T$ we can represent a point in $G^{\build}$ as a pair $(g,\lambda)$ where $\lambda \in T^{\trop}$ and $g\in G(k)$. 
\item Denote by  $G^{\beth}$ Thuillier's generic fiber of $G$ (in the sense of \cite[Prop.\ et D\'ef.\ 1.3]{Thuillier_toroidal}), which, in this case, is an analytic subdomain of $G^{\an}$ by \cite[Prop. 1.10]{Thuillier_toroidal}. Then $G^{\beth}$ is a maximal compact subgroup of $G^{\an}$ and we have that $G^{\an}=G^{\beth}T^{\an}G^{\beth}$ (see Proposition \ref{Prop_CartanDecomp} below). 
\item Consider now the space $\fD:=G(k)T^{\an}G^{\beth}$. Note that, because all maximal tori are conjugate, and $G(k)\subseteq G^{\beth}$, this space is independent of the choice of $T$ and it contains $S^{\an}$ for any torus $S$.
For a point $p\in \fD$, with $p$ of the form $gth$, we can define $\trop_{\build}(p)=(g,\trop(t))$. 
\end{itemize}
This defines a map from $\trop_{\build}\colon \fD\rightarrow G^{\build}_{\Berk}$ and in Section \ref{Sec_SkeletonAndBuilding} below we show that this map is well-defined and continuous. 

In \cite[Chapter 5.5]{Berkovich_book}, Berkovich defines a map $\Theta\colon G^{\build}_{\Berk}\rightarrow G^{\an}$ that lands in $\fD$ and is a continuous section of $\trop_{\build}$. The map is given by $(g,\lambda)\mapsto g\bft_{\lambda}\ast \bfg$ where $\bfg$ is the Shilov boundary point of $G^{\beth}$. Furthermore, there is a deformation retraction $\bfq\colon \fD\rightarrow \Theta(G^{\build}_{\Berk})$ given by $p\mapsto p\ast\bfg$ (see Section \cite[Corollary 6.1.2]{Berkovich_book}), thereby making the diagram
\begin{equation*}\begin{tikzcd}
 & \fD \arrow[rd, "\bfq"]\arrow[ld,"\trop_{{\build}}"']& \\
G^{\build}_{\Berk}\arrow[rr, "\Theta"'] & & \Theta(G^{\build}_{\Berk}) 
\end{tikzcd}\end{equation*} 
commute.  

For the \textbf{second approach} towards a tropicalization map landing in $G^{\build}$ we consider a stacky fan $\Delta$ in $G^{\build}$ as well as the associated toroidal bordification $\calX_G(\Delta)$ as in Theorem \ref{mainthm_stackyfan=toroidalbordification}. Denote by $\Sigma_\Delta$ the (rational polyhedral) cone complex associated to $\Delta$. We note that the stack $\calX_G(\Delta)$ is not quasi-paracompact when $\Delta$ contains cones in infinitely many distinct maximal tori, so $\calX_G(\Delta)^{\beth}$ does not exist as a Berkovich space. But we can construct $\calX_G(\Delta)^{\beth}$ in the category of topological spaces as a colimit of the affinoid spaces $\calX_G(\sigma)$ where $\sigma$ is a cone in $\Delta$ (see Section \ref{Sec_SkeletonAndBuilding} below for further details). The space $\calX_G(\Delta)^{\an}$ does exist and by the universal property of the colimit there is a continuous map $c\colon \calX_G(\Delta)^{\beth}\rightarrow \calX_G(\Delta)^{\an}$.

By \cite{Ulirsch_functroplogsch, Ulirsch_nonArchArtin} there is a natural continuous tropicalization map $\trop_{\calX_G(\Delta)}\colon \calX_G(\Delta)^{\beth}\rightarrow \Sigmabar_{\Delta}$ that, using the work of \cite{Thuillier_toroidal}, admits a continuous section $J=J_{\calX_G(\Delta)}\colon \Sigmabar_\Delta\rightarrow \calX_G(\Delta)^{\beth}$ making the composition $\bfp_{\calX_G(\Delta)}=J_{\calX_G(\Delta)}\circ \trop_{\calX_G(\Delta)}$ into a strong deformation retraction onto the \emph{toroidal skeleton} of $\calX_G(\Delta)^\beth$. The (non-extended) cone complex $\Sigma_{\Delta}$ admits an embedding into $G^{\build}_{\weak}$ denoted $\iota_{\Delta\subseteq G^{\build}}$.

\begin{maintheorem}\label{mainthm_skeletonbuilding}
Let $\Delta$ be a stacky fan in $G^{\build}$. 
\begin{enumerate}[(i)]
\item\label{item_mainthmb1} The two tropicalization maps $\trop_{\build}$ and $\trop_{\calX_G(\Delta)}$ as well as the two retractions $\bfp_{\calX_G(\Delta)}$ and $\bfq$ are equal. To be precise; we have that $\trop_{\calX_G(\Delta)}^{-1}(\Sigma_{\Delta})$ is equal to $c^{-1}(G^{\an})$, the continuous map $c\colon \calX_G(\Delta)^{\beth}\rightarrow \calX_G(\Delta)^{\an}$ sends $c^{-1}(G^{\an})$ into $\fD$, and
 the diagram 
\begin{equation*}
    \begin{tikzcd}
        c^{-1}(G^{\an})\arrow[dd,"\trop_{\calX_G(\Delta)}"]\arrow[rr,"c"]& &  \fD \arrow[dd,"\trop_{\build}"'] \\ \\\Sigma_{\Delta}\arrow[r,hookrightarrow,"i_{\Delta\subseteq G^{\build}}"]\arrow[uu,bend left,"J"] &  G_{\weak}^{\build} \arrow[r,"\id_{G^{\build}}"] & G^{\build}_{\Berk} \arrow[uu,bend right,"\Theta"'] 
    \end{tikzcd}
\end{equation*}
commutes. 
\item\label{item_mainthmb2} When the support of the stacky fan $\Delta$ covers all of $G^{\build}$, the map $i_{\Delta\subseteq G^{\build}}\colon \Sigma_{\Delta}\hookrightarrow G^{\build}_{\weak}$ is a homeomorphism and $c$ defines a continuous bijection between $c^{-1}(G^{\an})$  and $\fD$.
\end{enumerate}
\end{maintheorem}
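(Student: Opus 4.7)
My plan is to reduce everything to the case of a single maximal torus $T \leq G$, where the identification of the tropicalization map on $T^{\an}$ with the retraction onto its skeleton is classical (going back to \cite{Thuillier_toroidal}, and extended to stacky toric varieties in \cite{Ulirsch_nonArchArtin}), and then spread out using the Cartan decomposition $G^{\an} = G^\beth T^{\an} G^\beth$ together with the colimit construction of $\calX_G(\Delta) = \varinjlim_T \calX_T(\Delta_T) \times^T G$ from Theorem \ref{mainthm_stackyfan=toroidalbordification}. All of the objects on both sides of the big diagram have been constructed, essentially by definition, one torus at a time, so the diagram will commute as soon as we verify it over each apartment.

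For part (\ref{item_mainthmb1}), I fix a maximal torus $T \leq G$, set $\Delta_T = \Delta \cap T^{\trop}$ (a stacky fan in $T^{\trop}$), and observe that the $T$-toric stack $\calX_T(\Delta_T) \subseteq \calX_G(\Delta)$ is a $T$-equivariant open substack whose generic fiber $\calX_T(\Delta_T)^{\beth}$ is the union of the affinoids $\calX_T(\sigma)^{\beth}$ for $\sigma \in \Delta_T$. The classical identification of the toroidal tropicalization of a toric (stacky) variety with the coordinatewise valuation map shows that $\trop_{\calX_G(\Delta)}$ restricted to $\calX_T(\Delta_T)^{\beth}$ equals $\trop_T$ wherever both are defined, that the preimage of $\Sigma_{\Delta_T}$ (i.e.\ the non-extended part) is exactly the locus of points lying over the open stratum $T \subseteq \calX_T(\Delta_T)$, and that the section $J$ of $\trop_{\calX_G(\Delta)}$ agrees with Berkovich's section $\Theta|_{T^{\trop}}$. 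Since, in the non-quasi-paracompact setting, $c$ is simply the map induced from the inclusions of affinoid charts into $\calX_G(\Delta)^{\an}$, the identity $c^{-1}(G^{\an}) = \trop_{\calX_G(\Delta)}^{-1}(\Sigma_\Delta)$ and the landing of $c(c^{-1}(G^{\an}))$ in $T^{\an} \subseteq \fD$ follow on this apartment. To extend to a general cone $\sigma \in \Delta$ lying in an arbitrary torus $T' = gTg^{-1}$, I translate by $g \in G(k)$: both $\trop_{\build}$ (by construction $(g,\lambda) \mapsto g \cdot \lambda$) and $\trop_{\calX_G(\Delta)}$ (via the $G$-action of Theorem \ref{mainthm_stackyfan=toroidalbordification}(v)) are $G(k)$-equivariant with respect to this translation, so the torus computation transports. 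Commutativity of the square involving $\Theta$ and $J$ follows from the uniqueness of continuous sections of $\trop_{\calX_G(\Delta)}$ picking out the Shilov boundary point on each $\calX_T(\sigma)^{\beth}$, together with Berkovich's explicit formula $\Theta(g,\lambda) = g \bft_\lambda \ast \bfg$ which, on an apartment, is precisely the skeleton embedding of the toric stack $\calX_T(\Delta_T)$.

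For part (\ref{item_mainthmb2}), suppose $|\Delta| = G^{\build}$. Then for every maximal torus $T \leq G$ the restricted fan $\Delta_T$ has support all of $T^{\trop}$, so $\Sigma_{\Delta_T} \to T^{\trop}$ is a homeomorphism for the weak topologies on both sides (a standard fact about cone complex structures on $N_\R$), and $\iota_{\Delta \subseteq G^{\build}}$ is surjective since every point of $G^{\build}$ lies in some apartment. Injectivity is built into the definition of $G^{\build}$ as the quotient of $\bigsqcup_T T^{\trop}$ identifying points via $G(k)$-conjugation, and the weak topology on $\Sigma_\Delta$ matches the weak topology on $G^{\build}$ apartment-by-apartment. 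For the bijectivity of $c$ between $c^{-1}(G^{\an})$ and $\fD$, part (\ref{item_mainthmb1}) plus the Cartan decomposition identifies $\fD = \bigcup_{g \in G(k)} g \cdot T^{\an}$, and each $g \cdot T^{\an}$ is the image of a single affinoid chart in $\calX_G(\Delta)^{\beth}$ inside the open stratum, so $c$ is bijective onto its image $\fD$.

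The main obstacle I expect is the careful handling of the colimit structure on $\calX_G(\Delta)^{\beth}$: since $\calX_G(\Delta)$ fails to be quasi-paracompact as soon as $\Delta$ meets infinitely many maximal tori, one cannot simply quote Thuillier's theorem globally, and the map $c$ is not an isomorphism in general. The proof must keep careful track of which topology is in play on each side (weak vs.\ Berkovich on $G^{\build}$, colimit vs.\ subspace on $\calX_G(\Delta)^{\beth}$), and the statement that $c$ is only a continuous bijection in (\ref{item_mainthmb2}) (and not a homeomorphism) is exactly the phenomenon of the two topologies on $G^{\build}$ failing to coincide. Beyond this bookkeeping, all the compatibility checks are local on an apartment and reduce to the torus case.
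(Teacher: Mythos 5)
Your overall strategy — reduce to a single maximal torus and propagate — is in the right spirit, but the execution has a genuine gap exactly where the paper supplies its key Lemma~\ref{lemma_bfpisbfq}. The central issue is that your propagation mechanism is $G(k)$-translation, whereas what is actually needed is $G^\beth$-bi-invariance of $\trop_{\calX_G(\Delta)}$. Recall $\fD = G(k)\,T^{\an}\,G^\beth$, and the factor $h \in G^\beth$ in a point of the form $g\, t\, h$ is \emph{not} a $k$-point. Similarly, the Cartan decomposition $G^{\an} = G^\beth T^{\an} G^\beth$ writes points using $G^\beth$-factors, not $G(k)$-factors. So showing that the two tropicalization maps agree on the image of $\calX_T(\Delta_T)^\beth$ and then translating by $g\in G(k)$ only covers $G(k) T^{\an}$, not all of $c^{-1}(G^{\an})$, nor does it show $\trop_{\calX_G(\Delta)}(gth)$ is independent of $h \in G^\beth$. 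The paper gets around this by establishing $\bfp_{\calX_G(\Delta)} = \bfq$, i.e.\ that the toroidal retraction equals $p\mapsto p \ast \bfg$ ($\ast$-multiplication by the Shilov point of $G^\beth$), which is manifestly $G^\beth$-bi-invariant; this is proved via functoriality of $\bfp$ along the dominant surjective morphism of toroidal embeddings $\calX_T(\sigma)\times G \to \calX_G(\sigma)$ and an explicit computation of the Shilov boundary of $\calM(\bft_\lambda)\times\calM(\bfg)$. You never supply this input, and without it the commutativity of the bottom square does not follow. Your alternative appeal to ``uniqueness of continuous sections'' is circular: it presupposes that $\Theta$, pulled back along $c$, is a section of $\trop_{\calX_G(\Delta)}$, which is what needs to be proved.

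There are also two smaller errors. First, $\calX_T(\Delta_T)$ is not an open substack of $\calX_G(\Delta)$: it sits inside $\calX_G(\sigma) = \calU_\sigma \times^T G$ as the closed fiber $\calU_\sigma \times^T T$ over the identity coset, of codimension $\dim G - \dim T > 0$ whenever $G\neq T$. Thus ``restricting'' $\trop_{\calX_G(\Delta)}$ to it does not describe the ambient tropicalization; the relevant Zariski-local toric charts have target $\G_m^{\dim U + \dim G/B} \times \calX_T(\sigma,\phi_\sigma)$, not $\calX_T(\sigma,\phi_\sigma)$ alone (cf.\ Proposition~\ref{prop_toroidalstructure}). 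Second, the identification $\fD = \bigcup_{g\in G(k)} g\cdot T^{\an}$ you use in part~\eqref{item_mainthmb2} is false: $T^{\an}G^\beth$ is strictly larger than $\bigcup_{g\in G(k)} (gT)^{\an}\cap T^{\an}G^\beth$ already for $\mathrm{SL}_2$ (take $th$ with $t$ diagonal of nonzero valuation and $h$ a nontrivial unipotent integral matrix). The paper instead observes that for a fixed maximal torus $T$ one has $c^{-1}(G^{\an}) \cap \calX_G(\Delta_T)^\beth = T^{\an}G^\beth$ and uses quasi-paracompactness of $\calX_G(\Delta_T)$ to get that $c$ is a homeomorphism on that piece.
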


Theorem \ref{mainthm_skeletonbuilding} exhibits a close relationship between the extended building, embedded into $G^{\an}$, and the toroidal skeleton associated to the toroidal bordification. It would be very interesting to understand the precise relationship between the embedded building in the case of non-constant coefficients and toroidal bordifications over valuation rings in the sense of \cite[Section IV.3]{KKMSD_toroidal}. We hope to return to this question in a future article.

\subsection*{Spherical tropicalization and toroidal tropicalization}

Recall that a $G$-variety $X$ is said to be a spherical variety if it is normal and there is a Borel subgroup $B\subseteq G$ with an open orbit in $G$. Spherical varieties are nonabelian analogues of toric varieties, and they can also be studied using combinatorial data similiar to that of toric varieties as originally described in \cite{LunaVust}.

Let $X$ be a spherical $G$-variety. As introduced in \cite{TevelevVogiannou}, \cite{Nashpreprint}, and \cite{Coles}, one can define a \textit{spherical tropicalization} $X^{\trop_G}$ of $X$, which is the target of a spherical tropicalization map $\trop_{G}\colon X^{\an}\rightarrow X^{\trop_G}$ (see \cite[Section 2]{NashExtendedVIaToric} or \cite[Subsection 5.2]{Coles} for details). As in the toric case the spherical tropicalization map admits section $J_G\colon X^{\trop_{G}}\rightarrow X^{\an}$ and the composition $\bfp_G=J_G\circ\trop_G$ defines a retraction from $X^{\an}$ onto a closed subset of $X^{\an}$. 

Again in a similar manner to toric varieties we can restrict the spherical tropicalization map to the space $X^{\beth}$, we will denote the image by $\Sigmabar_X^G$. The retraction $\bfp_G$ defines a strong deformation retraction from $X^{\beth}$ onto a closed subset that is equal to $J_G(\Sigmabar_X^G)$.
Suppose that the embedding $G/H\hookrightarrow X$ is toroidal. Then, as above, there is a tropicalization map $\trop_X\colon X^\beth\rightarrow \Sigmabar_X$ associated to $X$ that admits a continuous section $J$ making the composition $\bfp$ into a strong deformation retraction onto $J(\Sigmabar_X)$. The following Theorem shows that the two tropicalizations, $\Sigmabar_X$ and $\Sigmabar_X^G$, are canonically homeomorphic and that the tropicalization maps agree.

\begin{maintheorem}\label{mainthm_sphertroplogtrop}
Let $G$ be a connected reductive group and let $X$ be a spherical $G$-variety with open $G$-orbit $G/H$. Assume the embedding $G/H\hookrightarrow X$ is a toroidal embedding. Then we have that the spherical tropicalization map and the tropicalization map for toroidal embeddings agree. To be precise, we have $J_G(\Sigmabar_X^G)=J(\Sigmabar_X)$ and the two retraction maps $\bfp$ and $\bfp_G$  are the same.
\end{maintheorem}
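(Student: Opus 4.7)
The plan is to use the local structure theorem for toroidal spherical varieties to reduce both tropicalization maps on $X^\beth$ to the toric tropicalization of a common affine toric slice, and then conclude equality of the sections and retractions directly from the agreement on each such slice together with $G$-equivariance.

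First, I would apply the local structure theorem (see e.g.\ \cite{BrionKumar}): for each closed $G$-orbit $Y \subseteq X$ there is a parabolic $P = L \cdot R_u(P)$ in $G$ and a $P$-stable affine open $X_0 \subseteq X$ meeting $Y$ such that multiplication induces an isomorphism
\begin{equation*}
R_u(P) \times Z \xrightarrow{\sim} X_0,
\end{equation*}
where $Z$ is an $L$-stable affine toric variety under a quotient torus $A$ of $L$. Since $X$ is toroidal, the $G$-translates of such $X_0$ cover $X$, and the induced covering of $X^\beth$ reduces the comparison of the two tropicalization maps to the case of $X_0^\beth$, provided both maps are $G$-equivariant with trivial action on the target. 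This is clear for $\trop_G$ by construction, and holds for $\trop_X$ because in the toroidal case $G$ preserves each toroidal stratum and hence acts trivially on $\Sigmabar_X$.

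Second, I would identify both tropicalizations of $X_0$ with the toric tropicalization of $Z$. The product decomposition yields $\Sigmabar_{X_0} \cong \Sigmabar_Z$, and by \cite{Thuillier_toroidal, Ulirsch_functroplogsch} the toroidal tropicalization $\trop_{X_0}$ factors as the projection $X_0 \to Z$ followed by the toric tropicalization of $Z$. For the spherical side, the toroidal hypothesis forces the colored fan of $X$ to have empty set of colors, so $\Sigmabar_X^G$ is the support of the fan of $X$ inside the extended valuation cone, which under the local structure theorem is canonically $N_{A,\R}$. Moreover, the $B$-semi-invariant rational functions on $X_0$ agree, up to units pulled back from $R_u(P)$, with characters of $A$ pulled back from $Z$; evaluating $-\log\|\cdot\|$ of such functions on $\xi \in X_0^\beth$ therefore gives the toric tropicalization of the image of $\xi$ in $Z^\beth$. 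This produces a canonical identification $\Sigmabar_X^G \cong \Sigmabar_X$ under which $\trop_G$ and $\trop_X$ coincide on $X_0^\beth$.

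Finally, the sections $J$ and $J_G$ both send a cone point $v \in \Sigmabar_Z$ to the associated monomial semi-norm on $Z^\beth$, extended trivially over $R_u(P)$ and translated by $G$ to the appropriate chart. Since both sections are continuous, $G$-equivariant, and agree on each chart, they agree globally, giving $J(\Sigmabar_X) = J_G(\Sigmabar_X^G)$; equality of the retractions $\bfp = J \circ \trop_X$ and $\bfp_G = J_G \circ \trop_G$ is then immediate. I expect the main technical obstacle to be verifying that these local identifications glue consistently across $G$-translates of different local charts attached to different closed orbits: this should reduce to the intrinsic characterization of both $\Sigmabar_X$ and $\Sigmabar_X^G$ as the space of $G$-invariant divisorial valuations on $k(X)$ supported on the toroidal boundary, which guarantees compatibility independent of the choice of chart.
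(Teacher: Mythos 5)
The paper's proof also begins from the local structure theorem, but its execution is genuinely different from yours: rather than covering $X^{\beth}$ by $G(k)$-translates of $X_0^{\beth}$ and matching the two tropicalization maps chart by chart, it uses the formula $\bfp_G(p) = \bfg \ast p$ from \cite[Theorem A]{Coles}, the dominant surjective toroidal morphism $m\colon G\times Z\rightarrow X$, and functoriality of the toroidal retraction to write
\begin{equation*}
\bfp(p) = m^{\an}\big(\bfp_{G\times Z}(q)\big) = \bfg \ast \bft_{\trop_S(s)} = \bfg \ast s = \bfg\ast p,
\end{equation*}
so both retractions are the single operator $\bfg\ast(-)$. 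This is a one-step $\ast$-multiplication computation, not a cover-and-glue argument.

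The step in your argument that I do not think is justified as written is the sentence ``evaluating $-\log\|\cdot\|$ of such functions on $\xi \in X_0^\beth$ therefore gives the toric tropicalization of the image of $\xi$ in $Z^\beth$ \ldots under which $\trop_G$ and $\trop_X$ coincide on $X_0^\beth$.'' The spherical tropicalization of a point $\xi$ is \emph{not} defined by evaluating $B$-semi-invariants directly at $\xi$; it is defined via a generic $G(k)$-translate, i.e.\ $\trop_G(\xi)(\chi_f) = -\log\|f\|_{\bfg\ast\xi}$, and one has $-\log\|f\|_{\bfg\ast\xi} \leq -\log\|f\|_\xi$ in general with equality only when $\xi$ is already ``generic'' for $f$. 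Asserting that on $X_0^{\beth}$ these two quantities coincide is essentially the theorem itself restated, and your ``therefore'' carries none of the weight needed. In fact this is exactly what the paper's $\ast$-multiplication argument establishes: it shows that after pushing the product skeleton of $G^{\beth}\times Z^{\beth}$ forward under the action map, the result is $\bfg\ast\xi$ and also equals $J(\trop_X(\xi))$. Without something replacing that computation --- for instance, an explicit Cartan-type argument that $\xi\in X_0^{\beth}$ forces $\|f\|_{\bfg\ast\xi} = \|f\|_\xi$ for $B$-semi-invariant $f$ --- your reduction to $Z^{\beth}$ does not close. (Your remarks on $G(k)$-equivariance of both $\trop_G$ and $\trop_X$ and on reducing to $X_0^{\beth}$ are fine; the gluing concern you flag at the end is minor by comparison and handled by the intrinsic formulation you suggest.)
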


Theorem \ref{mainthm_sphertroplogtrop} is of independent interest, since it fills a significant gap in the literature on the tropical geometry of spherical varieties. In this article we mostly care about the case when $G$ itself is considered as a spherical variety. 

\subsection*{Affine buildings and spherical tropicalization}

The group $G$ is acted on by $G\times G$ via $(g,h)\cdot x=gxh^{-1}$ which gives $G$ the structure of a spherical $G\times G$-variety. Here we have that the spherical tropicalization $G^{\trop_{G\times G}}$, which we will denote $\calV$, can be identified with $N_{\R}/W$, where $N$ is the cocharacter lattice of some maximal torus in $T$ and $W$ is the Weyl group associated to $T$. Given another maximal torus $S=gTg^{-1}$ in $G$, we can define a map from $S^{\trop}$ to $\calV$ via $x\mapsto g^{-1}xgW$. This way we obtain a continuous map $\pi \colon G^{\build}_{\Berk}\rightarrow \calV$.

\begin{maintheorem}
The map $\pi\colon G^{\build}\rightarrow \calV$ is well-defined, independent of the choice of $T$, and its restriction to the Weyl chambers in $G^{\build}$ is a linear isomorphism. Moreover, the diagram
        \begin{equation}\label{eq_factorization}\begin{tikzcd}
            & \fD \arrow[ld, "\trop_{\build}"'] \arrow[r,"\subseteq"]&G^{\an}\arrow[rd,"\trop_{G\times G}"]& \\
            G^{\build}_{\Berk}\arrow[rrr, "\pi"]  & && \calV 
\end{tikzcd}\end{equation} 
commutes and the factorization \eqref{eq_factorization} is functorial with respect to homomorphisms of reductive groups. To be precise, given $\xi: G_1\rightarrow G_2$ be a morphism of algebraic groups then the following diagram commutes:
     \begin{equation*}
\begin{tikzcd}
G_1^{\build }\arrow[d,"\xi^{\build}"'] \arrow[rr,"\pi"] & & \calV_1\arrow[d,"\trop(\xi)"]\\
G_2^{\build} \arrow[rr,"\pi"] & & \calV_2.
\end{tikzcd}
\end{equation*}
\end{maintheorem}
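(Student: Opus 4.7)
The plan is to verify the five assertions --- well-definedness of $\pi$, independence of the choice of $T$, linearity on Weyl chambers, commutativity of the diagram, and functoriality --- in turn, with the commutativity being the substantive step.

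The well-definedness, torus-independence, and linearity statements are essentially combinatorial. For a maximal torus $S = gTg^{-1}$, two conjugators $g,g'$ differ by $n = g^{-1}g' \in N_G(T)$, and conjugation by $n$ realizes the Weyl element $[n]\in W$ acting on $N_\R$, which is killed by the quotient to $\calV = N_\R/W$. The same observation handles compatibility of the apartment-wise maps on the colimit defining $G^{\build}$, and shows that a different choice of base torus $T' = hTh^{-1}$ yields a canonical isomorphism $N'_\R/W'\cong N_\R/W$, because the conjugating element $h$ is unique up to $N_G(T)$. A closed Weyl chamber $C\subset N_\R$ is a fundamental domain for $W$, so $\pi|_C\colon C\hookrightarrow N_\R\twoheadrightarrow N_\R/W$ is a linear bijection of closed cones.

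The crux is the commutativity of the large square. Writing a general $p\in\fD$ as $p = gth$ with $g\in G(k)$, $t\in T^{\an}$ and $h\in G^{\beth}$, we have by definition $\pi(\trop_{\build}(p)) = \pi((g,\trop_T(t))) = [\trop_T(t)]\in\calV$, so the task is to show $\trop_{G\times G}(p) = [\trop_T(t)]$. I will apply Theorem~\ref{mainthm_sphertroplogtrop} to the wonderful compactification $\overline{G}$: since $G\hookrightarrow\overline{G}$ is a toroidal $(G\times G)$-equivariant spherical embedding and $\overline{G}$ is proper, spherical and toroidal tropicalizations agree on all of $G^{\an}\subseteq\overline{G}^{\an} = \overline{G}^{\beth}$, and on the maximal torus $T$ the toroidal tropicalization is manifestly $\trop_T$ composed with the quotient by $W$. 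The reduction of the general case to $p = t$ uses that $\trop_{G\times G}$ is invariant under the action of $G(k)\times G^{\beth}$: the first factor because $k$ carries the trivial valuation, the second because elements of $G^{\beth}$ have trivial tropical valuation and hence act trivially on $\calV$.

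For functoriality, a homomorphism $\xi\colon G_1\to G_2$ sends a chosen maximal torus $T_1$ into some maximal torus $T_2$ of $G_2$, inducing $N_1\to N_2$; the composition $N_1\to N_2\to\calV_2$ descends to the map $\trop(\xi)\colon\calV_1\to\calV_2$ via compatibility of the $W_1$- and $W_2$-actions on the image. Since both $\xi^{\build}$ and $\pi$ are defined apartment-by-apartment from this same tori-level data, the outer functoriality square commutes. The principal obstacle I anticipate is the identification of spherical tropicalization on $T^{\an}$ with the composition of the toric tropicalization and the Weyl quotient; routing through Theorem~\ref{mainthm_sphertroplogtrop} and the wonderful compactification is the efficient way to handle this without a direct calculation with $B\times B$-invariant rational functions.
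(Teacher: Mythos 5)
Your proposal is correct in outline and rests on the same underlying engine as the paper's proof---the $\ast$-multiplication formulas for the various retractions---but it takes a more circuitous route that introduces a step the paper avoids entirely. The paper proves commutativity intrinsically: it records that $\Theta\circ\trop_{\build}$ is $p\mapsto p\ast\bfg$, that $J\circ\trop_{G\times G}$ is $p\mapsto \bfg\ast p\ast\bfg$, and that the map $\tilde\pi := J\circ\pi\circ\Theta^{-1}$ on $\Theta(G^{\build}_{\Berk})$ is $p\mapsto\bfg\ast p$; commutativity and torus-independence then both follow by associativity of $\ast$, with no need to decompose $p=gth$ or to compute $\trop_{G\times G}$ on $T^{\an}$ separately. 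You instead decompose $p=gth$, invoke invariance under $G(k)\times G^{\beth}$ (which is the same $\ast$-multiplication fact in disguise, since $G(k)\subseteq G^{\beth}$ and $\bfg\ast h=\bfg$ for $h\in G^{\beth}$), and then route through Theorem~C and the wonderful compactification to identify $\trop_{G\times G}|_{T^{\an}}$ with $\trop_T$ followed by the Weyl quotient. That last identification is \emph{not} ``manifest'' in the way you suggest: the toroidal tropicalization restricted to $T^{\an}$ is a map into the Weyl chamber $\calV$, not into all of $T^{\trop}$, and seeing that it factors as $\trop_T$ followed by projection modulo $W$ requires an argument (e.g.\ via the Cartan decomposition and the $\ast$-multiplication description, which you've already invoked for the invariance step). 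So your ``principal obstacle'' is a real one, and the paper's choice to stay at the level of $\Theta$, $J$, $\tilde\pi$, and $\ast$-products bypasses it cleanly; it also handles torus-independence as a by-product, whereas you argue that separately by conjugation bookkeeping. Your functoriality argument matches the paper's.
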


\subsection*{Wonderful compactifications and toroidal tropicalization}

The map $\pi$ given above has a direct connection with a well-known class of equivariant compacitifications of $G$, which, due to their particularly nice properties, are known as the \emph{wonderful compactifications} of $G$. We consider the group $G\times G$ acting on $G$ via $(g,h)\cdot x=gxh^{-1}$. From the perspective of spherical varieties, a wonderful compactification of $G$ is a $G\times G$-equivariant compactification of $G$ such that the inclusion of $G$ into the compactificaiton is a toroidal embedding.
Contrary to our construction of a toroidal bordification of $G$ above, a wonderful compactification depends on the choice of a maximal torus $T$ of $G$ (note however that isomorphism classes of wonderful compactifications do \textit{not} depend on a choice of torus).

The construction of wonderful compactifications goes back to de Concini and Procesi \cite{deConciniProcesi} in the case of a semisimple algebraic group of adjoint type over the complex numbers. When the group $G$ is semi-simple there is a unique minimal wonderful compactification, it is minimal in the sense that it has the smallest possible number of $G\times G$-orbits, furthermore every other wonderful compactification of $G$ admits a proper $G\times G$-equivariant morphism to it.
The construction has been extended to reductive groups by Brion and Kumar \cite{BrionKumar}. If $X$ is the wonderful compactification of $G$, then the open embedding $G\hookrightarrow X$ is a toroidal embedding, and thus there is a tropicalization map $\trop_{\won} \colon X^{\beth}\rightarrow \overline{\Sigma}_X$. Furthermore, $X$ is a spherical $G\times G$-variety, and the action of $G\times G$ on $X$ extends the action of $G\times G$ on $G$ described above. The wonderful compactification of $G$ is complete, so $X^{\beth}=X^{\an}$, and, thus, Theorem \ref{mainthm_sphertroplogtrop} implies:

\begin{maincorollary}
We have $J_{G\times G}(\calV)=J(\Sigmabar_X)$ and the retraction maps $\bfp_{G\times G} $ and $\bfp$, from $X^{\an}$ onto $J(\Sigmabar_X)$ are equal. In this sense the maps $\trop_{G\times G}$ and $\trop_{\won}$ agree.
\end{maincorollary}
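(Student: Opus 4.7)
The plan is to verify that the hypotheses of Theorem~\ref{mainthm_sphertroplogtrop} are satisfied when we take the group to be $G\times G$ and the spherical variety to be the wonderful compactification $X$, and then simply to identify the spherical tropicalization target with $\calV$. First I would recall that a wonderful compactification $X$ of $G$ is, by construction, a spherical $G\times G$-variety: the $(G\times G)$-action is the extension of $(g,h)\cdot x=gxh^{-1}$ on $G$, and the open $(G\times G)$-orbit is $G\simeq (G\times G)/\Delta G$, where $\Delta G\subseteq G\times G$ is the diagonal. The defining property of the wonderful compactification is that $G\hookrightarrow X$ is a toroidal embedding, and hence all assumptions of Theorem~\ref{mainthm_sphertroplogtrop} hold.

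Next I would verify that the target $\Sigmabar_X^{G\times G}$ of the spherical tropicalization (restricted to $X^{\beth}$) coincides with $\calV$. Since $X$ is complete, one has $X^{\beth}=X^{\an}$, so the restricted spherical tropicalization agrees with the full spherical tropicalization $\trop_{G\times G}\colon X^{\an}\to X^{\trop_{G\times G}}$. Now the spherical tropicalization of a spherical variety depends only on the open orbit together with the colored fan of the embedding, and for a complete toroidal spherical embedding the image of $\trop_{G\times G}$ is precisely the valuation cone of the open orbit. Identifying the valuation cone of $(G\times G)/\Delta G$ with $\calV=N_\R/W$ as discussed above gives $\Sigmabar_X^{G\times G}=\calV$.

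Having checked both ingredients, the corollary is then an immediate specialization of Theorem~\ref{mainthm_sphertroplogtrop}: the equality $J_{G\times G}(\Sigmabar_X^{G\times G})=J(\Sigmabar_X)$ given by that theorem becomes $J_{G\times G}(\calV)=J(\Sigmabar_X)$, and the theorem's equality of retractions $\bfp_{G\times G}=\bfp$ on $X^{\beth}=X^{\an}$ gives the remaining assertion. The claim that $\trop_{G\times G}$ and $\trop_{\won}$ agree is then the natural reformulation of these two equalities, since both maps are characterized by their sections and associated retractions.

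The only part requiring any real work — and thus the main obstacle — is the identification $\Sigmabar_X^{G\times G}=\calV$, which relies on the general compatibility between spherical tropicalization and the combinatorial Luna–Vust data of the toroidal embedding; everything else is a direct invocation of Theorem~\ref{mainthm_sphertroplogtrop} together with completeness of $X$.
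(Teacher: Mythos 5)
Your proof is correct and follows the same route as the paper: the paper's argument for this corollary is exactly the observation that the wonderful compactification $X$ is a complete spherical $G\times G$-variety whose open orbit $G\simeq(G\times G)/\Delta G$ sits inside it as a toroidal embedding, so that $X^\beth=X^{\an}$ and Theorem~\ref{mainthm_sphertroplogtrop} applies verbatim with $G$ replaced by $G\times G$. The one extra step you flag — identifying the target of the spherical tropicalization with $\calV$ — is left implicit in the paper (which tacitly writes $\calV$ where, strictly speaking, $\Sigmabar_X^{G\times G}$ is the canonical compactification of $\calV$ since $X$ is complete); your sketch of that identification via the Luna--Vust data of the open orbit is the intended one.
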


The intimate connection between wonderful compactifications and spherical tropicalizations explains why we sometimes like to refer to the spherical tropicalization of $G$ as the \emph{wonderful tropicalization}.

In \cite{MartensThaddeus}, the authors explain how the wonderful compactification(s) can be seen as special cases of a family of (partial) compactifications of $G$ as moduli spaces of $\G_m$-equivariant principal $G$-bundles on chains of projective lines. In Section \ref{section_bundles} below we use this perspective to describe the tropicalization of $G$ in terms of a suitable map of moduli spaces. When $G=\GL_n$, this may be seen as an instance of the geometry of principal bundles on metric graphs developed in \cite{GrossUlirschZakharov}. This also gives a --as far as we are aware-- first instance of spherical tropicalization that can be described using a moduli space of tropical objects.

\subsection*{Funding} This project has received funding from, the Deutsche Forschungsgemeinschaft (DFG, German Research Foundation) TRR 326 \emph{Geometry and Arithmetic of Uniformized Structures}, project number 444845124, from the DFG Sachbeihilfe \emph{From Riemann surfaces to tropical curves (and back again)}, project number 456557832, as well as the DFG Sachbeihilfe \emph{Rethinking tropical linear algebra: Buildings, bimatroids, and applications}, project number 539867663, within the  
SPP 2458 \emph{Combinatorial Synergies}. This project has also received funding from the U.S. National Science Foundation's Division of Mathematical Sciences grants, award numbers 2302475 and 2053261.

\subsection*{Acknowledgements} We thank Daniel Allcock, H\"ulya Arg\"uz, David Ben-Zvi, Lorenzo Fantini, Tom Gannon, Gary Kennedy, Chris Manon, Diane Maclagan, Steffen Marcus, Evan Nash, Sam Payne, Dhruv Ranganathan, Alberto San Miguel Malaney, Matt Satriano, Michael Temkin, Jenia Tevelev, Amaury Thuillier, Tassos Vogiannou, Annette Werner, Dmitry Zakharov, and Paul Ziegler for useful conversations en route to this article.

\bigskip

\part{Berkovich spaces and extended affine buildings}
\bigskip

\section{Berkovich analytic geometry}\label{Sec_BerkovichGeometry}

\subsection{Analytification}

Let $X$ be a scheme that is locally finite type over $k$.  Then the Berkovich analytification $X^{\an}$ is a topological space, which can be equipped with a sheaf of analytic functions and then called a $k$-analytic space, which allows us to study the global non-Archimedean analytic geometry of $X$ (see \cite{Berkovich_book, BerkEtale} for a detailed exposition). In the remainder of this article, we will be primarily concerned with the underlying set and topology of these spaces. Notice that the valuation on $k$ induces a \emph{non-Archmiedean norm} on $k$ given by $\nor{\gamma}_k= e^{-\val_k(\gamma)}$ if $\val_k$ is the valuation on $k$.
Let $A$ be a any $k$-algebra then we say a \textit{(multiplicative) seminorm} on $A$ is a map 
 \begin{equation*}
\gnor \colon A\rightarrow{\R} 
 \end{equation*}
 subject to the following axioms:
 
 \begin{enumerate}[\ \ \ \ (a)]
 \item $\nor{\gamma}=\nor{\gamma}_k$ for any $\gamma \in k$ 
 \item $\nor{f}\geq  0$ for all $f\in A$;
 \item $\nor{f+g}\leq \max\{ \nor{f},\nor{g}\}$ for all $f,g\in A$. 
 \item $\nor{fg}=\nor{f}\cdot \nor{g}$ for all $f,g \in A$.
 \end{enumerate}

 \begin{remark}
    If $K$ is any valued field then there is a norm on $K$ given by $e^{-\val_K(\cdot)}$ and conversely if $K$ has a non-Archimedean norm then there is a valuation on $K$ given by $-\log(\gnor_K)$. In this sense non-Archimedean norms and valuations are `equivalent data'. In particular, because any normed extension of a non-Archimedean normed field is automatically non-Archimedean, normed extensions of the ground field $k$ are equivalent to valued extensions of $k$.
 \end{remark}
 
We say that $\gnor$ is \textit{norm} if $\nor{f}=0$ already implies $f=0$.
We may define the set of points of $X^{\an}$ as
\[
X^{\an}=\big\{(x,\gnor) \mid \textrm{ $\gnor$ is a norm on the residue field $k(x)$.}\big\}.
\]
Consider the structure morphism $\rho:X^{\an}\rightarrow X$ given by $(x,\gnor)\mapsto x$. The topology on $X^{\an}$ is the coarest such that $\rho$ is continuous, and for each $U\subseteq X$ a Zariski open and $f\in \calO_X(U)$ the \emph{evaluation map} $\textrm{ev}_f:\rho^{-1}(U)\rightarrow \R$ given by
\[
\gnor\longmapsto \nor{f(x)}
\]
is continuous. In particular, if $X=\Spec A$ then $X^{\an}$ is the set of all seminorms on $A$ and the topology is the coarsest such that for each $f\in A$ the evaluation map $\gnor\mapsto \nor{f}$ is continuous.  The Berkovich analytic space $X^{\an}$ is a locally path connected and locally compact topological space that is Hausdorff if and only if $X$ is separated, and compact if and only if $X$ is proper over $k$ (see \cite[Theorem 3.5.3]{Berkovich_book}).

Given a morphism $f:X\rightarrow Y $ of schemes that are locally of finite type over $k$, there is an induced morphism of $k$-analytic spaces $f^{\an}:X^{\an}\rightarrow Y^{\an}$. On the level of topological spaces it is given by $(x,\gnor)\mapsto (f(x), f_*\gnor)$ where $f_*\nor{h}=\nor{f_x(h)}$ for $h\in k(f(x))$ and $f_x$ is the morphism of reside fields $k(f(x))\rightarrow k(x)$. 

Given any valued extension $K/k$ there is a map $X(K)\rightarrow X^{\an}$ defined by $(\Spec K \rightarrow X)\mapsto (x,\gnor_K)$ where $x$ is the image of the unique point in $\Spec K$ and $\gnor_K$ is the restriction of the norm on $K$ to the residue field $k(x)$. The norm on $K$ induces a topology on $X(K)$, and the map $X(K)\rightarrow X^{\an}$ is continuous with respect to this topology. We can express the topological space of $X^{\an}$ as
\[
X^{\an}=\bigsqcup_{K/k} X(K)/ \sim
\]
where $\sim$ is the equivalence relation defined by $p\sim q$ if $p$ and $q$ define the same pair in $X^{\an}$.

\subsection{The $(.)^\beth$ analytification}\label{Subsec_} Recall that a scheme $X$ that is locally of finite type over $k$ is called \emph{quasi-paracompact} if $X$ admits a locally finite open covering by open affine schemes that are of finite type over $k$. In \cite{Thuillier_toroidal} another type of $k$-analytic space is associated to a quasi-paracompact locally finite type scheme $X$, which is denoted $X^{\beth}$. If $X=\Spec A$ then as a set it is given by
\[
X^{\beth}:=\big\{ \gnor \in X^{\an}\mid  \nor{f}\leq 1 \textrm{ for all $f\in A$} \big\}
\]
and its topology is the coarsest such that the evaluation maps are continuous, i.e. the topology of $X^{\beth}$ is the subspace topology given by viewing $X^{\beth}$ as a subset of $X^{\an}$. One may construct $X^{\beth}$ for an arbitrary locally finite type quasi-paracompact scheme by taking an affine open cover $\{X_i\}_{i\in I}$ and gluing the spaces $X_i^{\beth}$ along the the closed subdomains $(X_i\cap X_j)^{\beth}$ using \cite[Proposition 1.3.3]{BerkEtale}. Similar to above, any morphism $f:X\rightarrow Y$ of quasi-paracompact schemes locally of finte type over $k$ induces a map of analytic spaces $f^{\beth}:X^{\beth}\rightarrow Y^{\beth}$.

By \cite[Section 1]{Thuillier_toroidal} the following properties hold:
\begin{itemize}
    \item If $X$ is finite type, then $X^{\beth}$ is compact.
    \item If $X$ is separable then $X^{\beth}$ the natural morphism $X^\beth\rightarrow X^{\an}$ identifies $X^\beth$ with an analytic subdomain of $X^{\an}$.
    \item If $X$ is of finite type and separated, then we have that $X^{\beth}=X^{\an}$ if an only if $X$ is proper over $k$.
\end{itemize}
Furthermore, given any valued extension $K/k$, the intersection $X(K)\cap X^{\beth}$ is given by $X(K^{\circ})$ where $K^{\circ}$ is the valuation ring of $K$, i.e. those elements $\lambda \in K$ with $\nor{\lambda}_K\leq 1$. We can similarly write:
\[
X^{\beth}=\bigsqcup_{K/k} X(K^{\circ})/ \sim.
\]

\begin{remark}\label{remark_bethvsquasiparacompact}
In the remainder of this article we will also want to apply Thuillier's $\beth$-functor to $\calX_G(\Delta)$, which, in general, is a scheme that is locally of finite type over $k$ but may not be quasi-paracompact. 

Let $X$ be a possibly not quasi-paracompact scheme that is locally of finite type over $k$. Choose a cover by open affine subschemes $U_i$ of finite type over $k$. Then the $U_i^\beth$ are affinoid spaces and we may glue them in the category of rigid analytic spaces giving rise to a rigid analytic space $X^{\rig}$, but not in the category of Berkovich analytic spaces (see \cite[Proposition 1.3.3]{BerkEtale} for a discussion on gluing of Berkovich analytic spaces). We may nevertheless associate a topological space of "Berkovich points" to $X^{\rig}$ by gluing all $U_i^\beth$ over their intersections. 

In what follows, we are only interested in the topological properties of the space of Berkovich points of $X^{\rig}$. In order to avoid unnecessary and unenlightening complications, we choose to ignore this technical peculiarity and simply denote this topological space by $X^\beth$ as well.
\end{remark}

\subsection{The Berkovich spectrum} The last type of $k$-analytic space we will encounter is the \textit{Berkovich spectrum} of a Banach $k$-algebra $A$ (i.e. a $k$-algebra with a norm that is complete with respect to that norm). Define the Berkovich spectrum $\calM(A)$ of $A$ to be the set of seminorms on $A$ bounded by the norm on $A$. Endow $\calM(A)$ with the coarsest topology such that evaluating at an element of $A$ is always continuous. As a topological space $\calM(A)$ is a compact Hausdorff space \cite[Theorem 1.2.1]{Berkovich_book}. This construction is an analogue of $\Spec$ in algebraic geometry. Note that, if $A$ is a finitely generated $k$-algebra equipped with the trivial valuation, then $\calM(A)=(\Spec A)^{\beth}$. Let $p=(x,\gnor)\in X^{\an}$ and let $\calH(p)$ be the completion of $k(X)$ with respect to $\gnor$; this is referred to as \textit{the complete residue field at $p$}. The space $\calM(\calH(p))$ is a one point space and there is a canonical inclusion $\calM\big(\calH(p)\big)\hookrightarrow X^{\an}$. 

\subsection{Berkovich's $\ast$-multiplication}\label{subsubsec_starmult}

One construction that we will repeatedly make use of is $\ast$-multiplicative in the sense of \cite[Chapter 5]{Berkovich_book}. If $G$ is an algebraic group then $G^{\an}$ and $G^{\beth}$ are $k$-analytic groups. If $G$ acts on a variety $X$ then $G^{\an}$ acts on $X^{\an}$ (and similarly $G^{\beth}$ acts on $X^{\an}$ and $X^{\beth}$). Let $\bbG$ be a $k$-analytic group. The underlying set of points of $\bbG$ does not in general form a group (although $\bbG(k)$ is a group). If $\bbG$ acts on a $k$-analytic space $\bbX$ (such as $G^{\an}$ acting on $X^{\an}$ when $G$ acts on $X$) then $\ast$-mutliplication extends the operation on $\bbX$ of left multiplication by an element of $\bbG(k)$ to any element of $\bbG$. We refer the reader to \cite[Chapter 5]{Berkovich_book}, and \cite[Section 3]{Coles} for more details.

\begin{remark}
    Let $g\in \bbG$ and $x\in \bbX$, and denote the multiplication map by $m\colon \bbG\times \bbX \rightarrow \bbX$.  When we write $gx$ we mean the set 
    \begin{equation*}
    \big\{p \in \bbX \mid p=m(y) \textrm{ where $y\in \bbG\times \bbX$ and $p_1(y)=g$ and $p_2(y)=x$}\big\}.
    \end{equation*}
    When $g\in \bbG(k)$ the set $gx$ is a point, but this is not  a priori true for general $g$. When we say a `point of the form' $gx$ we mean a point $p\in gx$.
\end{remark}

Let $A$ and $B$ be Banach rings over $k$ then we can define a  norm on the tensor product $A\otimes_k B$ by
\[
\nor{f}_{A\otimes_k B}:=\inf\big\{\max\nor{a_i}\nor{b_i} \mid f=\sum\limits_i a_i\otimes b_i \big\}
\]
for $f\in A\otimes B$. We must remark that without assumptions on $k$ the seminorm $\nor{f}_{A\otimes_k B}$ is \textit{not} in general multiplicative, i.e. it is not true that, given any $f,g\in A\otimes_k B$, we have $\nor{fg}_{A\otimes_k B}=\nor{f}_{A\otimes_k B}\nor{g}_{A\otimes_k B}$. It is, however, submutliplicative, i.e. we have $\nor{fg}_{A\otimes_k B}\leq \nor{f}_{A\otimes_k B}\nor{g}_{A\otimes_k B}$ for all $f,g\in A\otimes_k B$. Define $A\widehat{\otimes}_k B$ to be the completion of $A\otimes B$ with respect to $\gnor_{A\otimes_k B}$. In the category of $k$-analytic spaces the product $\calM(A)\times\calM(B)$ is given by $\calM(A\hat{\otimes}_k B)$. If $K/k$ is a valued field extension and we have that for any other valued field extension $L/k$ the norm $\gnor_{K\otimes L}$ is multiplicative then we say $K$ is \emph{peaked}. If $p$ is a point of a Berkovich space and $\calH(p)$ is peaked then we say $p$ is \emph{peaked}.
It is shown in \cite[Corollaire 3.14]{Poineau} that, for any point in $p=(x,\gnor)\in X^{\an}$ and any normed field extension $K/k$, the norm $\gnor_{\calH(p)\otimes_k K}$ is multiplicative, i.e. all points are peaked, and therefore the norm on $\calH(p)\hat{\otimes}_k K$ defines a point in $\calM(\calH(p))\times \calM(K)$. Let $G$ be algebraic group acting on $X$, let $g\in G^{\an}$, and let $p\in X^{\an}$. Then there is a point in $\omega_{g,p}\in \calM\big(\calH(g)\big)\times \calM\big(\calH(p)\big)$ corresponding to the norm on $\calH(p)\hat{\otimes}_k\calH(p)$. Let $g\ast p$ be the image of $\omega_{g,p}$ under the composition:
\[
\calM\big(\calH(g)\big)\times \calM\big(\calH(p)\big) \hooklongrightarrow G^{\an}\times X^{\an}\rightarrow X^{\an}.
\]
\begin{examples}
  Here are several examples of $\ast$-multiplication:
  \begin{itemize}
      \item Let $g\in G(k)$, then for $p\in X$ we have $g\ast p=gp$ and in particular $e\ast p=p$.
      \item Let $G$ be a linear algebraic group, and let $g_0$ be the point of $G^{\an}$ corresponding to the trivial norm on $k[G]$. Then for any $h\in G^{\beth}$ we have $g_0\ast h= g_0$.
      \item Let $T$ be an $n$-dimensional torus and let $U_{\sigma}$ be the affine toric variety with big torus $T$ defined by the cone $\sigma$. Let $\bft$ be the point in $T^{\an}$ given by the trivial norm on $k[T]$. Then for any $p\in U_{\sigma}^{\an}$ the $\ast$-product $\bft\ast p$ is the norm given by:
      \[
      \Big\vert \!\Big\vert\sum\limits_{m\in \sigma^{\vee}}c_{m} \chi^m \Big\vert\!\Big\vert_{t\ast p}=\max_{m \in \sigma^{\vee}}\Big(\nor{c_m}\cdot \nor{\chi^m}_p\Big)
      \].
  \end{itemize}
\end{examples}

\begin{proposition}\label{prop_StarMultBySBPOfDifGroups}
    Let $\bbG$ and $\bbH$ be $k$-analytic groups acting on the $k$-analytic spaces $\bbX$ and $\bbY$, respectively. Let $\phi:\bbG\rightarrow \bbH$ be a morphism of $k$-analytic groups and let $\psi: \bbX\rightarrow \bbY$ be a morphism of $k$-analytic spaces. Then we have the following.
     \begin{enumerate}[(1)]
 \item For any peaked point $g\in \bbG$, the map $\bbX\rightarrow \bbX$ given by $p\mapsto g\ast p$ is continuous.
 \item Given $g\in \bbG$ and $h\in \bbH$, both peaked, and $p\in \bbX$ we have that $(g\ast h)\ast p= g\ast (h\ast p)$, i.e. $\ast$-multiplication is associative.
 \item Given a commutative diagram\begin{equation*}\begin{tikzcd}
\bbG\times \bbX\arrow[d,"\phi\times \psi"'] \arrow[rr,] & & \bbX\arrow[d, "\psi"]\\
\bbH\times \bbY \arrow[rr,] & &\bbY
\end{tikzcd}\end{equation*}
we have that $\psi(g)$ is peaked and $\psi(g\ast p)=\phi(g)\ast \psi(p)$.
 \item If $\bbG$ is commutative then for any $g,h\in \bbG$, one of which is peaked, we have $g\ast h=h\ast g$.
 \end{enumerate}
\end{proposition}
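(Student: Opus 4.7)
The overall plan is to exploit the defining construction of $\ast$-multiplication as the pushforward of the canonical point $\omega_{g,p}\in\calM(\calH(g)\widehat{\otimes}_k\calH(p))$ through the action map, together with the naturality of completed tensor products and the behavior of multiplicative seminorms under pullback. Throughout, peakedness is used precisely to guarantee that the tensor-product seminorm $\gnor_{\calH(g)\widehat{\otimes}_k\calH(p)}$ is multiplicative, so that it genuinely defines a single point of the Berkovich product rather than merely a seminorm.

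For part (1), I would factor $p\mapsto g\ast p$ as $p\mapsto\omega_{g,p}\mapsto g\ast p$, where the last step is the action map, which is continuous. Thus it suffices to check that $p\mapsto\omega_{g,p}$ is continuous into $\bbG^{\an}\times\bbX$; working locally where $\bbG=\calM(A)$ and $\bbX=\calM(B)$, this reduces to showing that for each $f=\sum a_i\otimes b_i\in A\otimes_k B$ the map $p\mapsto\nor{f}_{\omega_{g,p}}$ is continuous. Since this value is the infimum over presentations of $\max\nor{a_i}_g\nor{b_i}_p$, continuity follows from continuity of the evaluation maps $p\mapsto\nor{b_i}_p$, together with the standard fact that the infimum of a family of continuous functions that is realized uniformly in appropriate presentations is continuous on Berkovich spaces (this is essentially the argument in \cite[Chapter 5]{Berkovich_book}). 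For part (3), the morphism $\phi\times\psi$ induces contractive homomorphisms $\calH(\phi(g))\to\calH(g)$ and $\calH(\psi(p))\to\calH(p)$, and hence $\calH(\phi(g))\widehat{\otimes}_k\calH(\psi(p))\to\calH(g)\widehat{\otimes}_k\calH(p)$; pulling back the multiplicative seminorm $\omega_{g,p}$ along this map yields a multiplicative seminorm, which simultaneously proves that $\phi(g)$ is peaked and exhibits the pushforward as $\omega_{\phi(g),\psi(p)}$. The equality $\psi(g\ast p)=\phi(g)\ast\psi(p)$ then follows from the commutativity of the given square and naturality of pushing forward points.

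For part (2), I would assemble the triple product: peakedness of $g$ and $h$ implies (by iterating the peakedness criterion, or by first applying part~(3) to the two partial-action maps) that the iterated completed tensor product $\calH(g)\widehat{\otimes}_k\calH(h)\widehat{\otimes}_k\calH(p)$ carries a multiplicative seminorm $\omega_{g,h,p}$ extending $\omega_{g,h}$ and $\omega_{h,p}$. Applying part~(3) to the associativity square $\bbG\times\bbG\times\bbX\rightrightarrows\bbG\times\bbX\to\bbX$ in both bracketings shows that $(g\ast h)\ast p$ and $g\ast(h\ast p)$ are both the image of $\omega_{g,h,p}$ under the same triple-action map, hence equal. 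For part (4), I apply part~(3) to the swap automorphism $\swap\colon\bbG\times\bbG\to\bbG\times\bbG$ together with the multiplication $m\colon\bbG\times\bbG\to\bbG$: commutativity of $\bbG$ gives $m\circ\swap=m$, and symmetry of the completed tensor product identifies $\swap_\ast\omega_{g,h}$ with $\omega_{h,g}$, yielding $g\ast h=m_\ast\omega_{g,h}=m_\ast\swap_\ast\omega_{g,h}=h\ast g$.

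The principal obstacle I anticipate is the peakedness bookkeeping in part~(2): asserting that the iterated tensor product carries a multiplicative seminorm requires verifying that $\gnor_{\calH(g)\widehat{\otimes}_k\calH(h)}$, once shown multiplicative, extends further to a multiplicative seminorm on the triple tensor product when paired with $\calH(p)$. This is essentially the statement that peakedness is preserved under completed tensor products with arbitrary normed extensions, which is the content of \cite[Corollaire 3.14]{Poineau} cited earlier; the delicate point is that we must invoke this in a setting where the ``new'' ground field is itself a completed tensor product of two peaked fields, not a priori one of the fields we started with. Once that is in place, every other step is a straightforward application of functoriality of the $(\cdot)^{\an}$-construction.
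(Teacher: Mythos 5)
Your argument for part (4) matches the paper's verbatim: the swap morphism together with symmetry of the completed tensor product identifies $m_\ast\omega_{g,h}$ with $m_\ast\omega_{h,g}$, giving $g\ast h=h\ast g$. For parts (1)--(3) the paper gives no proof at all but simply cites \cite[Proposition 5.2.8]{Berkovich_book}; your sketches are plausible reconstructions of what that reference must contain, though be aware that in your argument for (1) the lower semicontinuity of $p\mapsto\nor{f}_{\omega_{g,p}}$ (an infimum of continuous functions is automatically upper semicontinuous, but not lower) is precisely the nontrivial content and cannot be dispatched as a ``standard fact,'' and in (3) deducing that the tensor seminorm on $\calH(\phi(g))\widehat{\otimes}_k\calH(\psi(p))$ agrees with the restriction of $\omega_{g,p}$ requires the Gruson-type result that completed tensor products over non-Archimedean fields preserve isometric embeddings.
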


In particular, all of the above hold when $\bbG$, $\bbH$, $\bbX$, and $\bbY$ are given by the Berkovich analytification or $\beth$-space associated to a variety.

\begin{proof}
    The first three items are the content of \cite[Proposition 5.2.8]{Berkovich_book}. For the last point assume $\bbG$ is commutative and let $g,h\in \bbG$. By assumption we have a commutative diagram:
    \begin{equation*}
\begin{tikzcd}
\calM\big(\calH(g)\big)\times\calM\big(\calH(h)\big) \arrow[r,hookrightarrow] &  \bbG\times \bbG\arrow[d,"\swap"] \arrow[rr,] & & \bbG\arrow[d, "\textrm{id}"]\\
\calM\big(\calH(h)\big)\times\calM\big(\calH(g)\big) \arrow[r,hookrightarrow] & \bbG\times \bbG \arrow[rr,] & & \bbG
\end{tikzcd}
\end{equation*}
    where $\swap$ is the transposition morphism $(g,h)\mapsto (h,g)$. We then see that the images of $\omega_{g,h}$ and $\omega_{h,g}$ are equal in $\bbG$.
\end{proof}

One particular instance of $\ast$-multiplication that is of particular interest is the following. Let $G$ be a connected linear algebraic group, acting on a variety $X$. Let $\bfg\in G^{\an}$ be the point corresponding to the trivial norm on the coordinate ring $k[G]$; this point is the Shilov boundary of $G^{\beth}$ in the sense of \cite[Section 2.4]{Berkovich_book}. Let $m_{\bfg}:X^{\an}\rightarrow X^{\an}$ be given by $p\mapsto \bfg\ast p$. Maps of this form will appear when constructing embeddings of the building into $G^{\an}$ and discussing connections with tropicalization.

\begin{proposition}\label{prop_PropertiesOfStarMultBySBP}
 Let $p=(x,\gnor)\in X^{\an}$. 
 \begin{enumerate}[(1)]
 \item\label{Item1_prop_properties} We have $m_{\bfg}(p)=\big(\eta_{Gx},\overline{\gnor}\big)$ where $\eta_{Gx}$ is the generic point of the $G$-orbit of $x$ and $\overline{\gnor}$ is a $G(k)$-invariant norm on the function field $k(Gx)$.
 \item\label{Item2_prop_properties} The image of $m_{\bfg}$ is all such points $\big(\eta_{Gx},\overline{\gnor}\big)$ where $\overline{\gnor}$ is a $G(k)$-invariant norm.
 \item\label{Item4_prop_properties} Multiplication by $\bfg$ is idempotent, i.e. $m_{\bfg}$ is a retraction of topological spaces.
 \item\label{Item5_prop_properties} If $H$ and $L$ are closed subgroups of $G$ such that $HL=G$ and $\bfh$ and $\bfl$ are the points of $G^{\an}$ given by the trivial norm on $k[H]$ and $k[L]$ then $\bfh\ast\bfl =\bfg$.
 \end{enumerate}
\end{proposition}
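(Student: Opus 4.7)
The plan is to analyze each assertion by unpacking the definition of the $\ast$-product applied to the Shilov point $\bfg$, which corresponds to the trivial norm on $k[G]$. I will repeatedly use Proposition \ref{prop_StarMultBySBPOfDifGroups} (associativity and functoriality), as well as the fact that the trivial norm on a $k$-algebra $A$ is characterized by $\|f\| = 1$ for every nonzero $f$, and that the tensor product of two trivial norms on $k[H] \otimes_k k[L]$ equals the trivial norm on this tensor algebra (from the infimum definition, both bounds are immediately $1$).

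For \eqref{Item1_prop_properties}, I will first observe that the scheme-theoretic point underlying $\bfg$ is the generic point $\eta_G$. Since $\ast$-multiplication refines ordinary scheme-theoretic action, the underlying scheme point of $\bfg \ast p$ is the image of $(\eta_G, x)$ under the action map $G \times X \to X$, which is precisely $\eta_{Gx}$. To identify the resulting norm on $\mathcal{H}(\bfg \ast p)$ as a $G(k)$-invariant seminorm on $k(Gx)$, I will invoke functoriality: for any $g \in G(k)$, left translation by $g$ preserves the trivial norm on $k[G]$, so $g \bfg = \bfg$; then associativity gives $g \ast (\bfg \ast p) = (g\bfg) \ast p = \bfg \ast p$, which unwinds to say the norm is fixed by the $G(k)$-action on $k(Gx)$.

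For \eqref{Item2_prop_properties}, let $q = (\eta_{Gx}, \nu)$ with $\nu$ a $G(k)$-invariant norm on $k(Gx)$. The plan is to choose a rational point $x_0$ of the orbit (we can enlarge $k$ if needed; invariance is preserved) and consider the restriction of $\nu$ to the local ring at $x_0$, producing a point $p = (x_0, \nu|_{k(x_0)})$ of $X^{\an}$. I will then check that $\bfg \ast p = q$ using the $G(k)$-invariance of $\nu$: both sides are $G(k)$-invariant norms on $k(Gx)$ which agree on $k(x_0)$, and any two $G(k)$-invariant seminorms on $k(Gx)$ agreeing on $k(x_0)$ coincide, since $k(x_0)$ and its $G(k)$-translates generate $k(Gx)$ (this is the place where some care is needed, and it is what I expect to be the main subtlety).

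For \eqref{Item4_prop_properties}, idempotence is a formal consequence of associativity and the identity $\bfg \ast \bfg = \bfg$: indeed $\bfg \ast (\bfg \ast p) = (\bfg \ast \bfg) \ast p = \bfg \ast p$. The identity $\bfg \ast \bfg = \bfg$ itself is the special case $H = L = G$ of \eqref{Item5_prop_properties}, so I will prove \eqref{Item5_prop_properties} first. Assuming $HL = G$, the multiplication morphism $\mu \colon H \times L \to G$ is dominant, hence $\mu^\sharp \colon k[G] \hookrightarrow k[H] \otimes_k k[L]$ is injective. The trivial norms on $k[H]$ and $k[L]$ tensor to the trivial norm on $k[H] \otimes_k k[L]$ (both upper and lower bounds in the infimum definition are immediate), so its pullback along the injective map $\mu^\sharp$ sends every nonzero element of $k[G]$ to an element of norm $1$, yielding the trivial norm on $k[G]$, i.e. $\bfh \ast \bfl = \bfg$. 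The main obstacle, as noted above, is the separation argument in \eqref{Item2_prop_properties}; once this is in place, the remaining items follow cleanly by associativity and the tensor-product description of $\ast$.
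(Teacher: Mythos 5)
Your argument for item (\ref{Item5_prop_properties}) is essentially the paper's: both proofs observe that the trivial norms on $k[H]$ and $k[L]$ tensor to the trivial norm on $k[H]\otimes_k k[L]$ and then restrict along the injection $k[G]\hookrightarrow k[H]\otimes_k k[L]$ coming from $HL=G$. For items (\ref{Item1_prop_properties})--(\ref{Item4_prop_properties}) the paper simply cites \cite[Section 3]{Coles}, so your attempt to supply self-contained arguments is a genuine departure. Your proofs of (\ref{Item1_prop_properties}) and (\ref{Item4_prop_properties}) are sound: for (\ref{Item1_prop_properties}), the identification of the underlying scheme point with $\eta_{Gx}$ via compatibility of $\rho$ with the action map, and the $G(k)$-invariance via $g\ast\bfg=\bfg$ together with associativity, both work; for (\ref{Item4_prop_properties}), $\bfg\ast\bfg=\bfg$ (from the case $H=L=G$ of (\ref{Item5_prop_properties})) plus associativity gives idempotence.

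The gap is in item (\ref{Item2_prop_properties}), exactly where you flagged uncertainty, and it is a real one. You propose to pick a rational point $x_0$ of the orbit, form $p=(x_0,\nu|_{k(x_0)})$, and separate using the claim that ``$k(x_0)$ and its $G(k)$-translates generate $k(Gx)$.'' But $k$ is algebraically closed, so for a closed ($k$-rational) point $x_0$ the residue field $k(x_0)$ is just $k$, and every $G(k)$-translate of $k$ inside $k(Gx)$ is again $k$; these generate only $k$, not $k(Gx)$. Worse, the resulting point $p$ is then just the rational point $x_0$ with the trivial norm, and a direct computation shows $\bfg\ast x_0$ is the \emph{trivial} norm on $k(Gx)$ (the orbit map $G\to Gx$ pulls functions back to $k(G)$, where $\bfg$ assigns norm $1$ to everything nonzero), not the arbitrary $G(k)$-invariant norm $\nu$ you started with. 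If instead $x_0$ is taken non-closed, $k(x_0)$ is a quotient of a subring of $k(Gx)$, not a subfield, so $\nu|_{k(x_0)}$ is not even well-defined. The clean route to (\ref{Item2_prop_properties}) is to show directly that a point $q=(\eta_{Gx},\nu)$ with $\nu$ a $G(k)$-invariant norm satisfies $\bfg\ast q=q$ (which also re-proves (\ref{Item4_prop_properties})), using Zariski density of $G(k)$ in $G$; this is the content of the cited argument in \cite[Section 3]{Coles}.
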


\begin{proof}
    For the first three items we refer the reader to \cite[Section 3]{Coles}. For the last result observe that norm on $\calH(\bfh)\hat{\otimes}_k \calH(\bfl)$ is determined by its values on $k[H]\otimes_kk[L]$ and for $f\in k[H]\otimes_kk[L]$ we have that
   \[
\nor{f}_{\calH(\bfh)\otimes_k \calH(\bfl)}:=\inf\Big\{\max\big\{\nor{h_i}\cdot\nor{l_i}\big\} \mid f=\sum\limits_i h_i\otimes l_i \Big\}.
    \]
    Since $\bfh$ and $\bfl$ are the trivial norm on their respective subrings, the norm $\gnor_{\calH(\bfh)\otimes_k \calH(\bfl)}$ is the trivial norm. Because $HL=G$, the $\bfh\ast \bfl$ is determined by its restriction from $k[H]\otimes_k k[L]$ to $k[G]$, and thus $\bfh\ast \bfl =\bfg$.
\end{proof}

We would like to make the connection between the map $m_{\bfg}$ and actions of the group $G^{\beth}$. The point $\bfg$ acts as a generic point of $G^{\beth}$ in the following sense. If we define $\{\bfg\}^h$ to be the collection of norms on $k[G]$ bounded by $\bfg$ then $\{\bfg\}^h=G^{\beth}$, and $\bfg$ can be recovered as the unique maximal point of $G^{\beth}$\footnote{We call the space $\{\bfg\}^h$ the holomorphically convex envelope of $\bfg$, see \cite[Section 2.6]{Berkovich_book} for more details.}. When an $k$-analytic group $\bbG$ acts on a $k$-analytic space $\bbX$ we can define the orbit of a point $x\in\bbX$ to be the image in $\bbX$ of all points $p\in \bbG x$ such that the projection of $p$ onto $\bbX$ is $x$. The orbits of $\bbG$ partition the underlying topological space of $\bbX$ and thus we can define a topological quotient $\bbG \backslash\bbX$ (see \cite[Section 5.1]{Berkovich_book} for more details).

\begin{proposition}\label{prop_StarMultGivesQuotient}
    Let $G$ act on an algebraic variety $X$, and let $\bbX$ be of the form $X^{\an}$ or $X^{\beth}$. Then the image of $m_{\bfg}$ is homeomorphic to $G^{\beth}\backslash \bbX$  via $\bfg\ast x\mapsto G^{\beth}x$.
\end{proposition}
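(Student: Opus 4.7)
The plan is to construct a continuous map $\bar{\Psi}\colon G^\beth\backslash\bbX\to m_{\bfg}(\bbX)$ induced by $[x]\mapsto \bfg\ast x$, verify that it is a bijection, and exhibit its set-theoretic inverse as a composition of manifestly continuous maps; this inverse will match the prescription $\bfg\ast x\mapsto G^\beth x$ of the statement. The property to exploit throughout is that $\bfg$ is simultaneously the Shilov boundary point of $G^\beth$ and itself an element of $G^\beth$, so that $\ast$-multiplying by $\bfg$ on the left absorbs any other element of $G^\beth$ on its right.

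First I would establish that $\Psi\colon\bbX\to m_{\bfg}(\bbX)$, $x\mapsto\bfg\ast x$, is continuous and $G^\beth$-invariant. Continuity follows from clause (1) of Proposition \ref{prop_StarMultBySBPOfDifGroups} together with the already-cited fact of Poineau that every point of a Berkovich analytic space is peaked. For $G^\beth$-invariance, the identity $\bfg\ast h=\bfg$ valid for every $h\in G^\beth$ (the second example preceding Proposition \ref{prop_StarMultBySBPOfDifGroups}), combined with associativity of $\ast$-multiplication (clause (2) of the same proposition), gives
\[
\bfg\ast(h\ast x)=(\bfg\ast h)\ast x=\bfg\ast x.
\]
The universal property of the quotient topology then produces a continuous surjection $\bar{\Psi}$. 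Injectivity follows from $\bfg\in G^\beth$: if $\bfg\ast x=\bfg\ast y$, the common value sits in both the $G^\beth$-orbit of $x$ and that of $y$, so the two orbits coincide. For continuity of the inverse, I would note that $\bar{\Psi}^{-1}$ is the composition of the subspace inclusion $m_{\bfg}(\bbX)\hookrightarrow\bbX$ with the quotient map $\bbX\twoheadrightarrow G^\beth\backslash\bbX$; it sends $\bfg\ast x$ to $G^\beth(\bfg\ast x)=G^\beth x$, and continuity is automatic.

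The main obstacle is the input identity $\bfg\ast h=\bfg$ for $h\in G^\beth$; once this is granted, as it is via the example block, the rest is essentially formal diagram chasing. A minor technical point is verifying that $\bfg\ast x$ really belongs to the $G^\beth$-orbit of $x$ in the Berkovich orbit formalism recalled in the paper, but this is immediate because the point $\omega_{\bfg,x}$ defining the $\ast$-product projects to $x$ under the second projection $G^\beth\times\bbX\to\bbX$.
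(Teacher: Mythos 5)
Your overall strategy --- descend $m_{\bfg}$ to a continuous bijection from $G^\beth\backslash\bbX$ onto the image and observe that the inverse is the composite of the subspace inclusion $m_{\bfg}(\bbX)\hookrightarrow\bbX$ with the quotient map $\bbX\to G^\beth\backslash\bbX$ --- is sound, and on the continuity side you are actually more explicit than the paper. The injectivity step is also fine: $\bfg\ast x$ lies in the orbit of $x$, and orbits partition $\bbX$.

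The gap is in the $G^\beth$-invariance of $\Psi$. What you prove is $\bfg\ast(h\ast x)=\bfg\ast x$ for all $h\in G^\beth$, i.e.\ that $\Psi$ is constant on the set of $\ast$-translates $\{h\ast x : h\in G^\beth\}$. But the $G^\beth$-orbit of $x$ in the sense used in the paper (Section \ref{subsubsec_starmult}) is the image under multiplication of the whole fiber of the second projection $G^\beth\times\bbX\to\bbX$ over $x$, which is $\bigcup_{h\in G^\beth} hx$ where each $hx$ is a \emph{set} of points. The point $h\ast x$ is only the maximal (Shilov) point of $\calM\big(\calH(h)\big)\times\calM\big(\calH(x)\big)$ pushed to $\bbX$; in general $hx$ contains other points that need not be of the form $h'\ast x$ for any $h'\in G^\beth$. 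So your argument establishes invariance only on a proper subset of the orbit. The paper closes exactly this gap: it takes $y\in hx$ arbitrary, first shows that $\bfg\ast y$ and $\bfg\ast x$ have the same scheme-theoretic image (the generic point of the $G$-orbit of $\rho(x)$), and then proves the \emph{inequality}
\[
\nor{f}_{\bfg\ast y}=\nor{m^*f}_{\omega_{\bfg,y}}\leq \nor{(1\times m)^*\circ m^* (f)}_{\omega_{\bfg,h,x}}=\nor{f}_{\bfg \ast h \ast x}= \nor{f}_{\bfg\ast x},
\]
with the reverse inequality following by symmetry in $x$ and $y$. Your computation $\bfg\ast(h\ast x)=(\bfg\ast h)\ast x=\bfg\ast x$ only reproduces the final two equalities of this chain; the middle inequality --- and the fact that one gets an inequality rather than an immediate identity --- is precisely what your argument omits.
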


\begin{proof}
This amounts to showing that for $x,y\in \bbX$ we have $G^{\beth}x=G^{\beth}y$ if and only if $\bfg \ast x= \bfg \ast y$. The case when $\bbX=X^{\beth}$ follows from the case when $\bbX=X^{\an}$.
Notice that $\bfg \ast x$ is in $G^{\beth}x$. So, if $\bfg\ast x = \bfg \ast y$, then $G^{\beth}x=G^{\beth}y$.
On the other hand, assume $G^{\beth}x=G^{\beth}y$. First we claim that the generic points in $X$ of $G\rho(x)$ and $G\rho(y)$ are equal, this follows because we have a commutative diagram
\[
\begin{tikzcd}
G^{\beth}\times X^{\an} \arrow{d}{m^{\an}} \arrow{r}{\rho} & G\times X \arrow{d}{m}\\
X^{\an} \arrow{r}{\rho} & X.
\end{tikzcd}
\]
Thus we have $\rho(\bfg\ast y)=\eta_{G\rho(y)}=\eta_{G\rho(x)}=\rho(\bfg\ast x)$. Now let $U$ be any affine open subset in $X$ containing this scheme-theoretic point. Let $y\in hx$ where $h\in G^{\beth}$ and let $f\in \calO(U)$. We then compute that
\[
\nor{f}_{\bfg\ast y}=\nor{m^*f}_{\omega_{\bfg,y}}\leq \nor{(1\times m)^*\circ m^* (f)}_{\omega_{\bfg,h,x}}=\nor{f}_{\bfg \ast h \ast x}= \nor{f}_{\bfg\ast x}.
\]
So for any $f$ we have that $\nor{f}_{\bfg \ast y}\leq \nor{f}_{\bfg \ast x}$. But the above computation is symmetric in $x$ and $y$. So we have that  $\nor{f}_{\bfg \ast y}\geq \nor{f}_{\bfg \ast x}$ and thus $\bfg \ast x=\bfg \ast y$.
\end{proof}

\begin{remark}
    In \cite[Chapter 5]{Berkovich_book} the quotient space $G^{\beth}\backslash \bbX$ is only given as a topological space though it can also be given the structure of an nonarchimedean analytic stack in the sense of \cite{Ulirsch_StackQuotient}. The quotient of a reduced affinoid group by a closed reduced affinoid subgroup is in fact a $k$-analytic space (see \cite[Theorem 6.7]{Bosch_Quotients} for a precise statement).
\end{remark}

Lastly, we remark there is a Cartan decomposition for $G^{\an}$. This can be found in \cite[Section 5.3]{Berkovich_book} where it is presented as a corollary of \cite[Theorem 7.3.4]{BT1}, but we include a proof here for completeness.

\begin{proposition}\label{Prop_CartanDecomp}
    Let $G$ be a reductive group over $k$, where $k$ is trivially valued and algebraically closed. Let $T$ be a maximal torus of $G$, then we have that:
    \[
    G^{\an}=G^{\beth}T^{\an}G^{\beth}
    \] and the quotient $G^{\beth}\backslash G^{\an} / G^{\beth}$ can be identified with a Weyl chamber in $T^{\trop}$.
\end{proposition}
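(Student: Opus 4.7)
The plan is to deduce this from the classical Cartan decomposition for reductive groups over complete non-Archimedean valued fields \cite[Theorem 7.3.4]{BT1}, following the strategy outlined in \cite[Section 5.3]{Berkovich_book}.

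To prove $G^{\an}=G^{\beth}T^{\an}G^{\beth}$, I would work at the level of valued points using $G^{\an}=\bigsqcup_{K/k}G(K)/\sim$, where $K$ ranges over complete algebraically closed valued extensions of $k$. Given $p\in G^{\an}$, pick a representative $g\in G(K)$ and apply the classical Cartan decomposition over $K$ to write $g=utv$ with $u,v\in G(K^{\circ})$ and $t\in T(K)$. Under the natural inclusions $G(K^{\circ})\subseteq G^{\beth}$ and $T(K)\subseteq T^{\an}$, the triple $(u,t,v)$ defines a point of $G^{\beth}\times T^{\an}\times G^{\beth}$ whose image under the analytified multiplication morphism is $p$.

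For the identification of the quotient, I would define $\bar{\tau}\colon G^{\beth}\backslash G^{\an}/G^{\beth}\to T^{\trop}/W$ by $[p]\mapsto [\trop(t)]$, where $g=utv$ is any Cartan decomposition of a representative $g\in G(K)$ of $p$. Well-definedness follows from the uniqueness part of the classical Cartan decomposition: $t$ is determined up to the action of $T(K^{\circ})\rtimes W$, and elements of $T(K^{\circ})$ tropicalize to $0\in T^{\trop}$, so the class $[\trop(t)]$ is well-defined. The same argument shows that the assignment is $G^{\beth}\times G^{\beth}$-invariant on $G^{\an}$ and hence descends to the double-coset quotient. Surjectivity of $\bar{\tau}$ follows by tropicalizing points of $T^{\an}$, and injectivity is another application of Bruhat-Tits uniqueness (two representatives $g_1, g_2$ lying in the same double coset have Cartan factors whose tropicalizations agree modulo $W$). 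Since a closed Weyl chamber in $T^{\trop}=N_{\RR}$ is a canonical fundamental domain for the action of $W$, this identifies $G^{\beth}\backslash G^{\an}/G^{\beth}$ with a Weyl chamber.

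The main obstacle will be that $\bar{\tau}$ is a priori only a bijection of sets; upgrading it to a homeomorphism requires checking continuity in both directions. The continuous map in one direction comes for free from the fact that the composition $T^{\an}\hookrightarrow G^{\an}\twoheadrightarrow G^{\beth}\backslash G^{\an}/G^{\beth}$ factors through the quotient $T^{\trop}\twoheadrightarrow T^{\trop}/W$, which in turn reduces to the (trivially-valued) fact that fibers of $\trop\colon T^{\an}\to T^{\trop}$ coincide with the orbits of the $T^{\beth}$-action on $T^{\an}$. The continuity of $\bar{\tau}$ itself is more subtle, since its definition depends on a non-canonical Cartan decomposition; I expect this to follow by identifying $\bar{\tau}$, on the dense subspace $\bigcup_S S^{\an}\subseteq G^{\an}$ of points lying in some maximal torus, with an explicit continuous assignment built from tropicalizations, and then extending by continuity using the closedness of $G^{\beth}\times G^{\beth}$-orbits.
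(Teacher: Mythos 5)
Your argument takes a genuinely different route from the paper's, and the difference matters. You propose to Cartan-decompose an arbitrary point $p\in G^{\an}$ directly by choosing a representative $g\in G(K)$ for some complete algebraically closed valued extension $K/k$ and then invoking $G(K)=G(K^{\circ})T(K)G(K^{\circ})$. But the Cartan decomposition you cite, \cite[Theorem 7.3.4]{BT1}, is most cleanly available for \emph{discretely} valued fields, and an arbitrary complete algebraically closed non-Archimedean extension of a trivially valued $k$ will have dense (non-discrete) value group. Extending the Cartan decomposition to such fields is not automatic: this is precisely the subtlety the paper is designed to avoid. The paper's proof does not attempt a pointwise decomposition at all. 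Instead it works only with the Puiseux series field $K=\bigcup_n k(\!(t^{1/n})\!)$, where the Cartan decomposition is inherited element-by-element from the discretely valued Laurent series fields $K_n$, observes that $G(K)$ (and hence $G(R)T(K)G(R)$) is \emph{dense} in $G^{\an}$ by \cite[Proposition 2.1.15]{Berkovich_book}, and then concludes by the non-trivial fact that $G^{\beth}T^{\an}G^{\beth}$ is \emph{closed} in $G^{\an}$ by \cite[Proposition 5.1.1]{Berkovich_book}. Your approach, if you could justify the Cartan decomposition in the needed generality, would be more direct and would avoid the closedness input; but as written it has a gap precisely where the paper is most careful. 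Note also that completing the Puiseux series field destroys the increasing-union structure, so you cannot rescue the pointwise argument by simply restricting $K$ to that completion.

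Two smaller remarks. First, your treatment of well-definedness and injectivity of $\bar{\tau}$ quietly assumes that two representatives of the same point (or of the same double coset) can be compared, which requires passing to a common valued extension; that is routine but should be said. Second, your continuity discussion is explicitly conjectural, and you acknowledge it; the paper's own identification of the double-coset quotient with a Weyl chamber is similarly terse (it reduces to the statement over the $K_n$-points and lets the reader supply the topology), so this part is no worse than the paper, but it should not be left at the level of ``I expect this to follow.''
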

\begin{proof}
    Let $K_1=k(\!(t)\!)$ be the field of formal Laurent series equipped with the $t$-adic valuation. Let $K_n=k(\!(t^{1/n})\!)$ be the degree $n$ field extension of $K_1$ whose valuation is the unique one extending the valuation on $K_1$. Let $R_n=k[[t^{1/n}]]$ be the valuation ring of $K_n$. We have the Cartan decompositon for $G(K_1)$, that is $G(K_1)=G(R_1)T(K_1)G(R_1)$, and because for each $n$ $K_n$ is isomorphic to $K_1$ as a valued field we have the Cartan decomposition for $G(K_n)$ for any $n$. If $K$ is the field of Puiseux series in $t$, then $K$ is increasing union of the fields $K_n$ and therefore any $g\in G(K)$ is contained in $G(K_n)$ for some $n$. So we have that $G(K)=G(R)T(K)G(R)$ (where $R$ is the valuation ring of $K$). But the rational points over the field of Puiseux series are dense in $G^{\an}$ as a corollary of \cite[Proposition 2.1.15]{Berkovich_book}. Because $G^{\beth}T^{\an}G^{\beth}\supseteq G(R)T(K)G(R)$ and $G(R)T(K)G(R)$ is dense in $G^{\an}$ we have that $G^{\beth}T^{\an}G^{\beth}$ is dense in $G^{\an}$.  But, by \cite[Prosition 5.1.1]{Berkovich_book}, we have that $G^{\beth}T^{\an}G^{\beth}$ is closed and so $G^{\an}=G^{\beth}T^{\an}G^{\beth}$. Similarly, the quotient can be identified with a Weyl chamber in $T^{\trop}$ because for each $n$ the quotient $G(R_n)\backslash T(K_n) /G(R_n)$ is identified with points in a Weyl chamber in $N(T)\otimes_{\Z} \Z[1/n]$ (given a choice of Weyl chamber/Borel) and these define points in $T^{\trop}$.
\end{proof}


\section{Reductive groups and extended buildings}\label{sec_buildings}

\subsection{Structure and combinatorics of reductive groups}\label{subsec_RedGroups}

Let $G$ be a connected reductive algebraic group over $k$. Much of the structure of $G$ can be described using the notion of a root datum. We will follow the construction of a root datum associated to a reductive group as in \cite[Reductive groups]{Springer}. For the combinatorics of root systems/root data we refer the reader to \cite{Humphreys} and for a comprehensive study of reductive groups that covers this material to \cite[Expos\'{e} XXII]{SGA3} and \cite{Springer_linearalgebraicgroups}.

A \textit{root datum} is a tuple $(M,\Phi,N,\Phi^*)$, where $M$ is a finite rank free abelian group and $N$ is the dual group, $\Phi\subseteq M$ is a finite subset called the \textit{roots}, $\Phi^*\subseteq N$ is also a finite subset called the \textit{coroots}, and there is a bijection $\Phi\rightarrow \Phi^*$ given by $\alpha\mapsto \alpha^*$. For each element $\alpha \in \Phi$ there is endomorphism $s_{\alpha}$ of $M$ given by
\[
s_{\alpha}(x)=x-\langle \alpha^*,x  \rangle \alpha
\]
and similarly we have an automorphism of $s_{\alpha^*}$ given by
\[
s_{\alpha^*}(x)=x-\langle x,\alpha  \rangle\alpha^*\ .
\]
Hereby, the following two axioms must be satisfied:
\begin{itemize}
    \item for each $\alpha \in \Phi$ we have that $\langle \alpha,\alpha  \rangle=2$
    \item for each $\alpha$ $s_{\alpha}(\Phi)\subseteq \Phi$ and $s_{\alpha^*}(\Phi^*)\subseteq \Phi^*$.
\end{itemize}
Given a choice of maximal torus $\subseteq G$ we can associate a \textit{root datum} to $G$ as follows. Let $M$ be the character lattice of $T$
and let $N$ the cocharacter lattice\footnote{Often in representation theoretic contexts a torus $T$ has been fixed and one refers to a character of $T$ as a weight of $G$, and a cocharacter as a coweight of $G$.}. Let $\mathfrak{g}$ be the Lie algebra of $G$, then $G$ acts on $\mathfrak{g}$ by conjugation. Then $\mathfrak{g}$ can be written as a direct sum of subrepresentations:
\[
\mathfrak{g}=\mathfrak{t}\oplus \left(\bigoplus_{\alpha \in \Phi} \mathfrak{g_{\alpha}} \right)
\]
where $\mathfrak{t}$ is the Lie algebra of $T$ and each representation $\mathfrak{g}_{\alpha}$ is a 1-dimensional subspace of $\mathfrak{g}$, on which $G$ acts by the character $\alpha\in M$, i.e. for each $x\in \mathfrak{g}_{\alpha}$ and $t\in T$ we have $t\cdot x=\alpha(t)x$. The set $\Phi$ will the be roots of the root datum defined by $G$ and $T$, and the pair $(M_,\Phi)$ is nothing but the root system associated to $G$ and $T$ (or equivalently the root system associated to $\mathfrak{g}$ and $\mathfrak{t}$). The pair $(N,\Phi^*)$ will be a root system associated to the Langland's dual of $G$. Now we will describe the coroots $\Phi^*$. Let $T_{\alpha}$ be the codimension 1 subtorus of $T$ given by connected component of the identity in the kernel of $\alpha$. Let $Z(T_{\alpha})$ be the centralizer of $T_{\alpha}$. The group $Z(T_{\alpha})$ is a connected reductive group with maximal torus $T_{\alpha}$ and the derived subgroup $G_{\alpha}$ of $Z(T_{\alpha})$ is a semisimple group of rank 1. There will be a unique cocharacter of $T$, such that $\textrm{im}(\alpha^*)T_{\alpha}=T$ and $\alpha^*(\alpha)=2$. Then $\Phi^*$ is given by all the cocharacters $\alpha^*$.

The vector space $N_{\R}$ can be decomposed into convex cones called \emph{Weyl chambers} that describe intersection of tori and parabolic subgroups of $G$ that contain $T$. The maps $s_{\alpha*}:N\rightarrow N$ in fact define automorphisms of $N$ and thus the vector space $N_{\R}$. In fact they are reflections about the hyperplane $H_{\alpha^*}=\Span(\alpha^*)^{\perp}.$ The group $W(T)$ generated by these reflections is a finite group called the \textit{Weyl group}.
Let $\textrm{Norm}(T)_G$ be the normalizer of $T$. Conjugation by an element of $\textrm{Norm}(T)_G$ on $T$ induces a linear automorphism of $N_{\R}$, and this induces an isomorphism $\textrm{Norm}_G(T)/T\rightarrow W$.  Let $C_0$ be a connected component of the compliment of the union of these hyperplanes $H_{\alpha^*}$. Then the closure $C$ of $C_0$ is a convex cone called the \textit{Weyl Chamber}. 

We can use the root datum and Weyl chambers to describe parabolic subgroups of $G$.
We say a subset $\Phi_+\subseteq \Phi$ is a set of \textit{positive roots} if the following holds:
\begin{itemize}
    \item given $\alpha \in \Phi$ eaxctly one of $\alpha$ or $-\alpha$ is in $\Phi^+$
    \item if $\alpha,\beta \in \Phi^+$ and $\alpha+\beta\in\Phi$ then $\alpha+\beta\in \Phi^+$.
\end{itemize} 
We say the \textit{negative roots} are $\Phi^-=-\Phi^+=\Phi\setminus \Phi^+$. Take any Weyl chamber, then set of roots $\alpha$ such that for any $u \in C$ $\langle u,\alpha \rangle \geq 0$ is a system of positive roots.  Equivalently given a system of positive roots $\Phi^+$ the set of $u \in N_{\R}$ such that $\langle u,\alpha \rangle \geq 0 $ for any $\alpha \in \Phi^+$ is a Weyl chamber. In this way we have a correspondence between a choice of positive roots and a choice of Weyl chamber in $N_{\R}$.

We can now describe the parabolic subgroups containing $T$ as follows, these are algebraic subgroups of $G$ that contain a Borel subgroup. 
For each root $\alpha$ there is a unique homomorphism of groups $\iota_{\alpha}:\GG_{a}\rightarrow G_{\alpha}$ such that for any $t\in T(k)$ we have $g\in \GG_{a}$ as well as  $t\iota_{\alpha}(g)t^{-1}=\iota_{\alpha}(\alpha(t)g)$. The map $\iota_{\alpha}$ is an isomorphism onto its image and we denote the image in $G$ by $U_{\alpha}$. So $U_{\alpha}$ is a subgroup of $G$ that is normalized by $T$, in particular $T$ acts on $U_{\alpha}$ by the character $\alpha$. We write $U_{\alpha}=\Spec(k[X_{\alpha}])$ to put coordinates on $U_{\alpha}$. 
Let $\Phi^+$ be a set of positive roots. Then the group generated by $T$ and $U_{\alpha}$ for $\alpha \in \Phi^+$ is a Borel subgroup $B$ of $G$, which contains $T$. The unipotent radical $B_u$ of $B$ is generated by the groups $U_{\alpha}$. The group $B_u$ is isomorphic to affine space as a variety. On the other hand, given a Borel subgroup, the set of roots $\alpha$ such that $U_{\alpha}$ is contained in the unipotent radical of the Borel is a system of positive roots. In this way we have that a choice of Weyl chamber, a choice of positive roots, and a choice of Borel containing $T$, are all equivalent. Given a Weyl chamber $C$ and let $u\in C$. Let $P(u)$ be the group generated by $T$ and all the subgroups $U_{\alpha}$ such that $\langle u,\alpha \rangle \geq 0.$ Then $P(u)$ is a parabolic subgroup of $G$ that contains $B$. Furthermore $P(u)=P(u')$ if and only if $u$ and $u'$ are contained in the relative interior of the same face of $C$. Faces of $C$ then correspond to parabolics containing $B$.

Given a Borel $B$ and corresponding set of positive roots $\Phi^+$ we say the \textit{opposite Borel subgroup}, $B^{\opp}$ is the group generated by $T$ and the groups $U_{\alpha}$ for $\alpha\in \Phi^-$. The group operation gives a map:
\[
B_u^{\opp}\times T \times B_u \rightarrow G
\]
given by $(c,t,b)\mapsto ctb$. This map is an open immersion of algebraic varieties and is called the \textit{big Bruhat cell}.

\begin{example}\label{Example_SLnRootDatum}
    Let $V=k^n$ and let $G=\SL(V)\subseteq \GL(V)$. For a matrix $g\in G$ $g_{ij}$ denotes the coefficient in the $i$th row and $j$th column. Let $T$ be the diagonal torus of $G$ and let $e_i$ be the character given by $e_i(t)=t_{ii}$. The characters of $\GL(V)$ are spanned by $e_i$ and the cocharacters by $e^*_i$ where $e_i^*$ is such that $e_i^*(e_j)=\delta_{ij}$ (the Kronecker delta) so the character and cocharacter lattices of $\GL(V)$ are given by $\Z^n$.
    Then we have $e_1+\cdots e_n=0$ for characters on $\SL(V)$ so $M$ is isomorphic to $\Z^{n-1}$ and can be identified with the quotient of $\Z^n$ by the vector $(1,\ldots 1)$. The roots $\Phi$ are given by the character $\alpha_{ij}=e_i-e_j$ for $i\neq j$ and $M$ is generated by the roots. We have $N$ is isomorphic to$\Z^{n-1}$ and can be identified with the subspace of $\Z^n$ where $e^*_1+\cdots e^*_n=0$. The coroots $\Phi^*$ are given by the cocharacters $\alpha_{ij}^*=e_i^*-e_j^*$. Note that $e_i^*$ is a cocharacter of the maximal torus of $\GL(V)$ but $e^*_i$ does not define a cocharacter on $T$, but $\alpha_{ij}^*$ does. From a tropical perspective, if we identify $N_{\R}=T^{\trop}$ with the image of its section in $T^{\an}\subseteq \SL(V)^{\an}$, then the cocharacter lattice corresponds to points in $t\in T(k[\![ s ]\!])$ of the form $t_{ii}=s^{n_i}$ where $n_i\in \Z$ the cocharacters are $\alpha_{ij}$ are given by the points $t$ such that $t_{ii}=s$, $t_{jj}=s^{-1}$ and $t_{kk}=1$ otherwise. Notice that the lattice $M$ in $M_{\R}$ is smooth but the lattice $N$ in $N_{\R}$ is not smooth.
    The subgroup of upper triangular matrices $B\subseteq G$ is a Borel subgroup containing $T$ and the corresponding set of positive roots is $\alpha_{ij}$ such that $i>j$. The Weyl group is $S_3$ and the Weyl chambers are 6 strictly convex cones. The opposite Borel is given by lower triangular matrices and the big Bruhat cell is given by matrices such that $g_{nn}\neq 0$. When $G$ is $\PGL(V)$ the root datum is the same as above, but $M$ and $N$ are exchanged, because $\PGL(V)$ is the Langland's dual of $\SL(V)$. One can then see the Weyl chamber decomposition of of $N_{\R}$ for $\PGL(V)$ by looking at the Weyl chamber decomposition for the character lattice of $\SL(V)$.

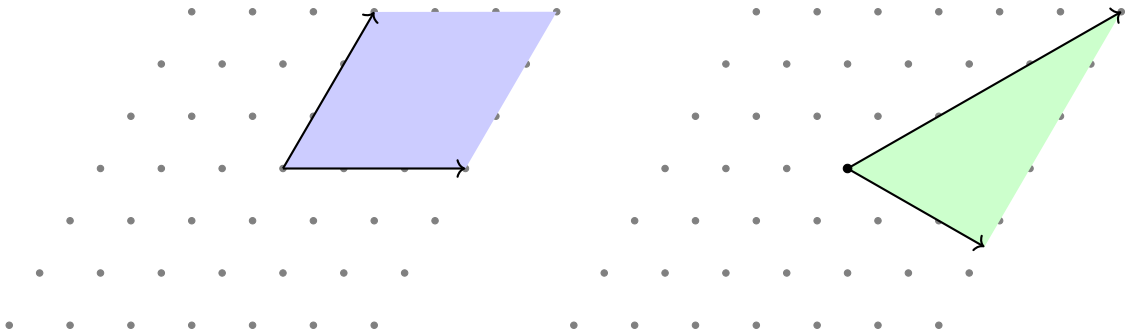
\begin{figure}[h]
    \centering
        \centering
        \begin{tikzpicture}[scale=0.8]
            \foreach \x in {-3,-2,-1,0,1,2,3}
                \foreach \y in {-3,-2,-1,0,1,2,3}
                    \filldraw[gray] (\x + 0.5*\y, {sqrt(3)/2*\y}) circle (1.5pt);


            \fill[blue!20, opacity=0.5] (0,0) -- (3,0) -- (4.5,2.59807621135) -- (1.5,2.595) -- cycle;
           \draw[->, thick] (0,0) -- (3,0);
            \draw[->, thick] (0,0) -- (1.5,2.595);

            

        \end{tikzpicture}
        \centering
        \begin{tikzpicture}[scale=0.8]
            \foreach \x in {-3,-2,-1,0,1,2,3}
                \foreach \y in {-3,-2,-1,0,1,2,3}
                    \filldraw[gray] (\x + 0.5*\y, {sqrt(3)/2*\y}) circle (1.5pt);


            \fill[green!20, opacity=0.5] (0,0) -- (4.5,2.59807621135) -- (2.25,-1.29903810568) -- cycle;


            \filldraw[black] (0,0) circle (2pt);

            \draw[->, thick] (0,0) -- (4.5,2.59807621135);
            \draw[->, thick] (0,0) -- (2.25,-1.29903810568);
            

        \end{tikzpicture}

    \caption{The lattice $N$ and one Weyl chamber for $\PGL(3)$ (on the left) and $\SL(3)$ (on the right). Notice that the Weyl chamber for $\PGL(3)$ is a unimodular cone, i.e. the spanning vectors form a basis for the lattice, but the Weyl chamber for $\SL(3)$ is not unimodular. 
    }
    \label{fig_thefigure}
\end{figure}

\end{example}

\subsection{Bruhat-Tits buildings}\label{subsec_BTbuildings}

In \cite{BT1} and \cite{BT2}, the authors give a detailed account of the structure of the rational points of a reductive groups over a nonarchimedean normed field, and in particular introduce the notion of a building. In our work that follows we will consider the  extended affine Bruhat-Tits building of $G$ over $k$ and here we give a description of this space. Note that our definition of the building differs somewhat from the mainstream approach to Bruhat-Tits buildings, as we will explain below.
We refer the reader to \cite{KalethaPrasad} for a thorough and modern discussion of Bruhat-Tits buildings and to  \cite{RemyThuillierWerner_survey} for a survey on the relationship between Bruhat-Tits buildings and Berkovich analytic geometry.

As in the introduction, we define the extended affine building of $G$ to be the set-theoretic colimit:
\[
G^{\build}:=\varinjlim_{T\le G} T^{\trop}.
\]
For each maximal torus $T$ we refer to the subset $T^{\trop}$ in $G^{\build}$ as an \textit{apartment}. There are two topologies we can give to $G^{\build}$. First, we can topologize it by taking the \emph{weak} topology given by the above colimit where tropical tori have the Euclidean topology. To denote this topological space we will write $G^{\build}_{\weak}$. In this topology a set is open if and only if its intersection with each apartment is open. The colimit topology is much finer that the topology that arises in Berkovich spaces, so we will now introduce another topology, and we will call this space $G^{\build}_{\Berk}$

We define the topology on $G^{\build}_{\Berk}$ by first giving an alternative description of the building.
We can define an equivalence relation on $G(k)\times T^{\trop}$ by $(g,\lambda) \sim (h,{\mu})$ are equal if and only if $g\bft_{\lambda}g^{-1}=h\bft_{\mu}h^{-1}$ in $G^{\an}$. Given another maximal torus $S$ recall that $S$ is conjugate to $T$, so let $S=gTg^{-1}$. Then $gT^{\trop}g^{-1}=S^{\trop}$ in $G^{\an}$ and so any point in $S^{\trop}$ can be written as $g\bft_{\lambda}g^{-1}$. The map $g\bft_{\lambda}g^{-1} \mapsto (g,\bft_{\lambda})$ defines a bijection between $G(k)\times T^{\trop}$ and $G^{\build}$ because all tori are contained in a maximal torus and all maximal tori are conjugate. This map is independent of the choice of $T$. In \cite[Theorem 5.4.2]{Berkovich_book} Berkovich gives an injection $\Theta \colon G^{\build}\rightarrow G^{\an}$ given by $(g,\lambda)\mapsto g\bft_{\lambda} \ast \bfg$ and we define the topology on $G^{\build}_{\Berk}$ such that this map is a homeomorphism onto its image. When $G$ is defined over a local field Berkovich's work was generalized to arbitrary reductive groups and was used to construct compactifications of the building, see \cite{RemyThuillierWernerI}, \cite{RemyThuillierWernerII}, and \cite{RemyThuillierWerner_wonderful}. Given a choice of Borel subgroup $B$, maximal torus $T$, and positive roots we can write any regular function on $G$ as:

\[
\sum\limits_{m,v}c_{m,v}\chi^mX^{v}
\]
where $m$ is an element of the character lattice of $M$ and $v\in \Phi^{\Z_{\geq 0}}$ is a multiindex. Then $g\bft_{\lambda} \ast \bfg$ is given by
\[
\nor{f}_{g\bft_{\lambda} \ast \bfg}=\max_{m,v}\nor{c_{m,v}}e^{-\lambda(m)} \prod_{\alpha \in \Phi^-} e^{-\lambda (\alpha^{v_{\alpha}})} \qquad\textrm{ when } \qquad g\cdot f=\sum\limits_{m,v}c_{m,v}\chi^mX^{v}.
\]
Then the topology on $G^{\build}_{\Berk}$ is such that for any  regular function $f$ on $G$ the map given by $(g,\lambda)\mapsto \nor{f}_{g\bft_{\lambda} \ast \bfg}$ is continuous. For each apartment in $G^{\build}_{\Berk}$ the topology on the apartment agrees with the Euclidian topology on $T^{\trop}$ but this topology is coarser than $G^{\build}_{\weak}$ in general.

We can describe the combinatorics of intersections of apartments in $G^{\build}$ according the following lemma.

\begin{lemma}
    Let $T$ and $S$ be maximal tori of $G$. Then the intersection $T\cap S$ is of the form
    \[
    \bigcap_{\alpha\in I}\ker(\alpha)
    \]
    where $I$ is some subset of the roots $\Phi$ associated to the torus $T$. Furthermore given any $I\subseteq \Phi$ there is some maximal torus $S$ such that the intersection $S \cap T$ is equal to $\cap_{\alpha\in I}\ker(\alpha)$.
\end{lemma}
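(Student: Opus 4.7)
The plan is to realize $T\cap S$ as the center of a suitable connected reductive subgroup of $G$, with the subset $I\subseteq\Phi$ emerging as the root system of that subgroup relative to $T$. First I would set $H := Z_G(T\cap S)^\circ$. Since $T\cap S\subseteq T$ is a (possibly disconnected) diagonalizable subgroup, its centralizer in $G$ is reductive, so $H$ is connected reductive. Both $T$ and $S$ are abelian groups containing $T\cap S$, hence commute with it, and being connected they are contained in $H$; since $T$ and $S$ are already maximal tori of $G$, they remain maximal tori of $H$.

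Now let $I := \{\alpha\in \Phi : \alpha|_{T\cap S}=1\}$. Because $T$ acts on each root subgroup $U_\alpha\subseteq G$ via the character $\alpha$, we have $U_\alpha\subseteq H$ if and only if $T\cap S$ acts trivially on $U_\alpha$, i.e., $\alpha\in I$. This identifies the root system of $H$ with respect to $T$ as $\Phi(H,T)=I$, and the standard description of the center of a connected reductive group as the common kernel of its roots yields $Z(H) = \bigcap_{\alpha\in I}\ker(\alpha)$, with the intersection taken inside $T$. The inclusion $T\cap S\subseteq \bigcap_{\alpha\in I}\ker(\alpha)$ is immediate from the definition of $I$, while $Z(H)\subseteq T\cap S$ holds because $Z(H)$ lies in every maximal torus of $H$, in particular in both $T$ and $S$. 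Combining these inclusions yields the first assertion.

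For the second assertion, given any $I\subseteq \Phi$, set $K := \bigcap_{\alpha\in I}\ker(\alpha)$ and $H:=Z_G(K)^\circ$. The analysis above shows $\Phi(H,T)=\{\alpha : \alpha|_K=1\}\supseteq I$, but the intersection of kernels over this enlarged set still equals $K$ (since $I\subseteq \Phi(H,T)$ forces $\bigcap_{\alpha\in \Phi(H,T)}\ker(\alpha)\subseteq K$, and conversely every $\alpha\in\Phi(H,T)$ vanishes on $K$). Hence $Z(H)=K$, and it remains to exhibit a maximal torus $S$ of $H$ with $T\cap S = Z(H)$; such an $S$ is automatically a maximal torus of $G$.

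The main obstacle is this last existence claim, but it follows from the classical fact that two generic maximal tori in a connected reductive group intersect exactly in the center: for sufficiently generic $u\in H$, the conjugate $S := uTu^{-1}$ satisfies $T\cap uTu^{-1}=Z(H)$. Passing to the adjoint quotient $H/Z(H)$ reduces this to the statement that two generic maximal tori in a semisimple adjoint group intersect trivially, which is a standard consequence of the density of regular semisimple elements and the fact that each such element lies in a unique maximal torus.
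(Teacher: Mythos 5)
Your first direction takes a genuinely different (and correct) route from the paper. The paper sets $H=\langle T,S\rangle$, reads off its Lie algebra $\mathfrak t\oplus\bigoplus_{\alpha\in I}\mathfrak g_\alpha$, and identifies $Z(H)=T\cap S$ with $\bigcap_{\alpha\in I}\ker\alpha$. You instead take $H=Z_G(T\cap S)^\circ$. This is a clean alternative: it gives reductivity of $H$ for free (centralizer of a diagonalizable subgroup), yields $\Phi(H,T)=I$ directly from the definition of $I$, and the sandwich $T\cap S\subseteq\bigcap_{\alpha\in I}\ker\alpha=Z(H)\subseteq T\cap S$ closes the argument without needing to determine the Lie algebra of a generated subgroup. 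The paper's $H$ is generally smaller and its $I$ may be a proper subset of yours, but both produce the same intersection of kernels, so both work.

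For the second direction your strategy agrees with the paper's in spirit (exhibit a generic conjugate $S=uTu^{-1}$ with $T\cap S=Z(H)$), but the final justification is thin. You reduce to the adjoint quotient and then assert that two generic maximal tori in a semisimple adjoint group intersect trivially as ``a standard consequence of the density of regular semisimple elements and the fact that each such element lies in a unique maximal torus.'' Those two facts alone do not yield the claim: the obstruction is not that $T\cap S$ might contain a regular element (it can't unless $T=S$), but that it might contain a \emph{non}-regular, non-central element, and controlling that requires an extra input. What is actually needed is the observation that any such offending $S$ must lie inside one of finitely many proper connected reductive subgroups $H_J\supsetneq T$ (the centralizers of non-central elements of $T$), together with a dimension count showing the maximal tori of the $H_J$ form a proper closed subset of all maximal tori. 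The paper makes this precise differently but equivalently: it picks a regular semisimple $s\in H$ lying outside the union of all proper $H_J$ (a nonempty open condition, since there are finitely many $H_J$), sets $S=Z_G(s)$, and then runs a centralizer argument to show $T\cap S=Z(H)$. Your sketch identifies the right lemma but does not prove it; you would need to supply either the $H_J$-avoidance argument or the incidence-variety dimension count to close the gap.
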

The following argument is adapted from the one in \cite{SpeyerOverflow}.
\begin{proof} 
    Let $H$ be the subgroup generated by $T$ and $S$, then $H$ is connected as both $S$ and $T$ are. So then $H$ is determined by its Lie algebra as it is connected and $k$ is characteristic 0. Because $H$ contain $T$ we can write:
    \[
    \mathfrak{h}=\mathfrak{t} \oplus \bigoplus_{\alpha \in I} \mathfrak{g}_{\alpha}
    \] for some $I\subseteq \Phi$.  It follows then that the center of $H$, which is equal to $S\cap T$, is the intersection $    \bigcap_{\alpha\in I}\ker(\alpha)$.

    On the other hand let $I\subseteq \Phi$. Let $\overline{I}$ be the intersection of the lattice generated by $I$ and $\Phi$.  Notice then that $  \bigcap_{\alpha\in I}\ker(\alpha)=   \bigcap_{\alpha\in \overline{I}}\ker(\alpha)$. Let $H_I$ be the subgroup of $G$ generated by the groups $G_{\alpha}$ for $\alpha \in \overline{I}$ and $T$. 
    The group $H_{I}$ is reductive. 
    The Lie algebra of $H_{I}$ is 
     \[
    \mathfrak{h}=\mathfrak{t} \oplus \bigoplus_{\alpha \in \overline{I}} \mathfrak{g}_{\alpha}.
    \]
    The union of all subgroups $H_J$, such that $J\subset I$ and $\overline{J}\neq \overline{I}$, is a closed proper subset of $H_I$ so its complement is nonempty and open. Take a regular semisimple element $s$ in this nonempty open, it is then contained in a unique maximal torus $S\subseteq H_{\overline{I}}$ and we claim that $S\cap T= Z(H_{\overline{I}})$, and thus $S\cap T= \bigcap_{\alpha\in I}\ker(\alpha)$.  We have that $S\cap T \supseteq Z(H_{I})$.
    Assume for a contradiction that $t$ is in $S \cap T$ and not in $Z(H_{\overline{I}})$. Let $Z(t)$ be the centralizer of $t$ and let $Z(t)_0$ be the connected component, then this is a connected proper subgroup of $H_I$ which contains $T$, but then its Lie algebra has the form
    \[
    \mathfrak{t} \oplus \bigoplus_{\alpha \in J} \mathfrak{g}_{\alpha}
    \]
    so $Z(t)_0=H_J$ for some $J\subseteq I$. Furthermore $\overline{J}\neq \overline{I}$ because we assumed that $t\notin Z(H_{I})$.
    But since $t\in S$ it follows that $S\subseteq Z(t)_0$, but then $s\in Z(t)_0$ and this contradicts our choice of $s$.
 \end{proof}

Because the kernel of a root $\alpha$ is the coroot hyperplane $H_{\alpha^*}$ it follows that given two apartments $T^{\trop}$ and $S^{\trop}$ the intersection $T^{\trop}\cap S^{\trop}$ is an intersection of coroot hyperplanes in $T^{\trop}$, and any intersection of coroot hyperplanes is the intersection of $T^{\trop}$ with some apartment $S^{\trop}$. 

A homomorphism of (connected reductive) algebraic groups $ G_1\rightarrow G_2$ will map tori to tori, and thus induces continuous maps $G^{\build}_{1,\weak}\rightarrow G^{\build}_{2,\weak}$ and $G^{\build}_{1,\Berk}\rightarrow G_{2,\Berk}^{\build}$.

\begin{example}
Let $G=\textrm{SL}_2$, then $G^{\build}_{\Berk}$ is homeomorphic to the open cone over $\P^1(k)$, i.e.$[0,\infty)\times \P^1(k)/\sim$ where the equivalence relation is defined by identifiying all points in the subset $\{0\}\times \P^1(k)$. The space $G^{\build}_{\weak}$ is given by gluing together a copy of $[0,\infty)$ at $0$ for each element of $P^1(k)$, and a set is open if and only if its intersection with each cone is open.
\end{example}

A more typical approach to defining the extended affine building for a reductive group is to define an equivalence relation $\sim$ on $G(k)\times T^{\trop}$ using certain subgroups of $G(k)$, this is the definition given in \cite{KalethaPrasad}, \cite[Chapter 5]{Berkovich_book}, and \cite{RemyThuillierWernerI}. 
When $k$ has a discrete valuation the space $G(k)\times T^{\trop}/\sim$ is a simplicial complex, which again is a more standard object to study in Bruhat-Tits theory. For trivially valued fields it is more common to consider the spherical building of $G$, rather than the affine or extended buildings. Neither of these notions of building fit our purposes as they do not agree with the skeletons of our toroidal embeddings or Berkovich's realization in $G^{\an}$.
When working over a trivially valued field the space $G^{\build}_{\weak}$ will reflect the combinatorics of Bruhat-Tits theory but also reflect the skeletons of our toroidal embeddings. The topology on $G^{\build}_{\Berk}$ is designed to agree with the realization of $G^{\build}$ in $G^{\an}$. This is why we have introduced the space $G^{\build}$ in the above manner.

\bigskip

\part{Toroidal bordifications and tropicalization}
\bigskip


\section{Toric varieties, toroidal embeddings, and tropicalization}

\subsection{Toric geometry and tropicalization}

In this subsection we will give an overview of toric varieties, toroidal embeddings, and their relevant tropicalization constructions.

\subsubsection{Rational polyhedral cones and affine toric varieties} In this section we give a quick recap of the well-known equivalence between rational polyhedral cones and affine toric varieties, as e.g. detailed in \cite{KKMSD_toroidal}, \cite{Fulton_toric} or \cite{CoxLittleSchenk_toric}.

Let $T$ be an (automatically split) algebraic torus over $k$. Write $M=\Hom(T,\G_m)$ for the character lattice of $T$ and $N=\Hom(\G_m,T)$ for its cocharacter lattice; then $N$ is naturally dual to $M$, i.e. we have $N=\Hom(M,\Z)$. Note that we may write $T=\Spec k[M]$ where $k[M]$ denotes the group algebra generated by $M$. The \emph{tropicalization} $T^{\trop}$ of $T$ is the $\R$-linear space $N_\R=N\otimes_{\Z} \R=\Hom(M,\R)$ and we set $M_\R=M\otimes_{\Z} \R$. 

A \emph{rational polyhedral cone} $\sigma$ in $T^{\trop}=N_\R$ is a strictly convex intersection of finitely many half spaces of the form
\begin{equation*}
H_m=\big\{\lambda\in N_\R \mid \lambda(m)\geq 0\big\}
\end{equation*}
for an element $m\in M$. Write $\sigma^\vee$ for the dual cone of $\sigma$ given by 
\begin{equation*}
\sigma^\vee=\big\{m\in M_\R\mid\lambda(m)\geq 0 \textrm{ for any $\lambda \in \sigma$}\big\} \ .
\end{equation*}
By Gordan's Lemma the dual monoid $S_\sigma=\sigma^\vee \cap M$, which is automatically an integral, saturated, and torsion-free commutative monoid with a zero element, is also finitely generated. A monoid with all of these properties is also called a \emph{toric monoid} and we may recover the date of $M$ and $\sigma$ by setting $M$ to be the Grothendieck group
\begin{equation*}
S_\sigma^{\gp}=\big\{m_1-m_2\mid m_1,m_2\in S_\sigma\big\}
\end{equation*} 
associated to $S_\sigma$, and 
\begin{equation*}\sigma=\Hom(S_\sigma,\R_{\geq 0})\subseteq\Hom(M,\R)=N_\R.\end{equation*}

A rational polyhedral cone naturally gives rise to an \emph{affine toric variety} $U_\sigma$, i.e. an affine $T$-equivariant partial completion of $T$. It is given by $U_\sigma=\Spec k[S_\sigma]$, where $k[S_\sigma]$ denotes the monoid algebra generated by $S_\sigma$ and the natural operation $T\times U_\sigma\rightarrow U_\sigma$ by the torus $T$ is given by the comultiplication
\begin{equation*}\begin{split}
 k[S_\sigma]&\longrightarrow k[M]\otimes_k k[S_\sigma]\\
\chi^m &\longmapsto \chi^m\otimes \chi^m \ . 
\end{split}\end{equation*}

In the following we will refer to a pair $(N,\sigma)$ consisting of a finitely generated free abelian group and a rational polyhedral cone $\sigma$ in $N_\R$ as an \emph{abstract (rational polyhedral) cone}. We call $(N,\sigma)$ \emph{sharp}, if $N_\R$ is generated by the vectors in $\sigma$. A \emph{morphism $(N,\sigma)\rightarrow (N',\sigma')$ of abstract cones} is a homomorphism $f\colon N\rightarrow N'$ such that the induced map $f_\R\colon N_\R\rightarrow N'_\R$ fulfils $f_\R(\sigma)\subseteq \sigma'$. Such a morphism induces a homomorphism $S_{\sigma'}\rightarrow S_{\sigma}$ of toric monoids, which, in turn, induces a homomorphism $k[S_{\sigma'}]\rightarrow k[S_{\sigma}]$ of monoid algebras. The induced morphism $U_\sigma\rightarrow U_{\sigma'}$ is a \emph{toric morphism} of affine toric varieties, i.e. a morphism $U_\sigma\rightarrow U_{\sigma'}$ that restricts to a group homomorphism $T\rightarrow T'$ making $U_\sigma\rightarrow U_{\sigma'}$ equivariant. In fact, there are natural equivalences
\begin{equation*}
\begin{tikzcd}
\mathbf{RPC} \arrow[r,"\sim"] & \mathbf{Mon_{tor}} \arrow[r,"\sim"] & \mathbf{AffTor}
\end{tikzcd}
\end{equation*}
between the category $\mathbf{RPC}$ of abstract rational polyhedral cones, the category $\mathbf{Mon_{tor}}$ of toric monoids, and the category $\mathbf{AffTor}$ of affine toric varieties. 

A face of a rational polyhedral cone $\sigma$ is any subset $\tau$ of the form $\sigma\cap u^{\perp}$ for $u\in \sigma^{\vee}$. Let $\Delta$ be a finite set of rational polyhedral cones in $N_{\R}$. Then we say $\Delta$ is a fan if the following holds:
\begin{itemize}
    \item for any $\sigma \in \Delta$ and $\tau$ a face of $\sigma$ we have that $\tau \in \Delta$
    \item given $\sigma, \sigma' \in \Delta$, the intersection of $\sigma$ and $\sigma'$ is a face of each.
\end{itemize}
Given a rational polyhedral cone $\sigma$ and a face $\tau$ the inclusion map $(N,\tau )\hookrightarrow (N,\sigma)$ induces an open inclusion of toric varieties $U_{\tau}\hookrightarrow U_{\sigma}$. It follows from the above equivalence between affine toric varieties and rational polyhedral cones that there is an order preserving bijection between $T$-invariant open affine subsets of $U_{\sigma}$ and faces of $\sigma$.
Given a fan $(N,\Delta)$ we can define a toric variety $V(\Delta)$ by gluing the toric varieties $U_{\sigma}$ for $\sigma\in \Delta$ along open subvarieties defined by common faces of the cones in $\Delta$. 
We will say an abstract fan is a pair $(N,\Delta)$ where $N$ is a finitely generated abelian group and $\Delta$ is a fan in $N_{\R}$.
A \textit{morphism of abstract fans} $(N,\Delta)\rightarrow (N',\Delta')$ is a homomorphism of groups $f:N\rightarrow N'$ such that for each $\sigma\in\Delta$ there is $\sigma'\in\Delta'$ such that $f$ induces a morphism of abstract cones $(N,\sigma)\rightarrow (N',\sigma')$. Any map of fans $\Delta\rightarrow \Delta'$ will define a group homomorphism of tori $T\rightarrow T'$ and a toric morphism of toric varieties $V(\Delta)\rightarrow V(\Delta')$. The functor taking a fan $\Delta$ to $V(\Delta)$ and morphisms of abstract fans to their associatied morphism toric varieties induces an equivalence of categories between abstract fans and toric varieties. 

Given a fan $\Delta\subseteq N_{\R}$ the $T$-orbits of $X(\Delta)$ are in order reserving bijection with elements of $\Delta$ as follows. Let $\tau\in \Delta$ and define $N(\tau)=N/\Span (\tau)$, if the dimension of $\tau$ is $d$ then $N(\tau)$ corresponds to a torus of dimension $n-d$, $T(\tau)$, which is a quotient of $T$. Let the character lattice of $T(\tau)$ be $M(\tau)$, note that $M(\tau)=\tau^{\perp}$.
Define the star $\textrm{star}(\tau)$ of $\tau$ in $\Delta$ to be the image of the set 
\begin{equation*}
    \big\{\sigma\in \Delta \mid \textrm{$\tau$ is a face of $\sigma$}\big\} 
\end{equation*}
of cones in $\Delta$ containing $\tau$ under the projection map $\textrm{pr}_{\tau}:N\rightarrow N(\tau)$. The star of $\tau$ is fan in $N_{\R}(\tau)$ and it defines a toric variety $V\big(\textrm{star}(\tau)\big)$. The toric variety $V\big(\textrm{star}(\tau)\big)$ is covered by open affines of the form
$U_{\sigma}(\tau)=\Spec \big(k[\textrm{pr}_{\tau}(\sigma)^{\vee}\cap M(\tau)]\big)$
where $\sigma$ is a cone in $\Delta$ containing $\tau$ as a face. The inclusion of monoids $\textrm{pr}_{\tau}(\sigma)^{\vee}\cap M(\tau)\hookrightarrow \sigma^{\vee}\cap M$ that induces a closed embedding $U_{\sigma}(\tau)\hookrightarrow U_{\sigma}$, and these glue to give a toric closed embedding $V\big(\textrm{star}(\tau)\big)\hookrightarrow V(\Delta)$. Under this inclusion the image of the big torus in $V\big(\textrm{star}(\tau)\big)$, which is $T(\tau)$, will be a $T$-orbit of $V(\Delta)$ and this gives an order reversing correspondence between cones in $\Delta$ and $T$-orbits of $V(\Delta)$. 
Finally, note that the orbits of $V(\Delta)$ define a stratification of $V(\Delta)$, and the boundary divisors of $V(\Delta)$ correspond to closures of $(n-1)$-dimensional orbits, which correspond to $1$-dimensional cones in $\Delta$. Note that two boundary divisors intersect if and only if they are contained in some common cone of $\Delta$, and the intersection of a finite number of the divisors is the closure of the orbit that corresponds to the cone the corresponding $1$-dimensional cones collectively generate.

\subsubsection{Tropicalization of a toric variety}\label{subsubsection_TropicalizingTori}
Here we first recall the tropicalization of toric varieties as it appears in \cite{Payne_anallimittrop} (also see \cite{Kajiwara_troptoric}), and we will also introduce the tropicalization map introduced essentially in \cite{Thuillier_toroidal}.
Let $T$ be an $n$-dimensional torus over $k$. The \textit{tropicalization} $T^{\trop}$ of $T$ is defined to be the space $N\otimes_{\Z} \R$, where $N$ is the cocharacter lattice of $T$. There is a tropicalization map:
\[
\trop_T:T^{\an}\longrightarrow T^{\trop}
\]
given by:
\[
p\longmapsto \big(m\mapsto -\log\nor{ \chi^m}_p \big).
\]
Note that if $f:T\rightarrow T'$ is a morphism of algebraic tori then $f$ induces a continuous homomorphism of groups $T^{\trop}\rightarrow T'^{\trop}$. Furthermore there is a continuous section of the tropcalization map $J\colon T^{\trop}\rightarrow T^{\an}$, where $\lambda \mapsto \bft_{\lambda}$ and $\bft_{\lambda}$ is given by:
\[
\nor{\sum\limits_{m\in M} c_m \chi^m}_{\bft_{u}}=\max_{m\in M}\nor{c_m}e^{-\lambda(m)}.
\] Note that the image of the section $T^{\trop}\rightarrow T^{\an}$ is exactly the image of the map $T^{\an}\rightarrow T^{\an}$ given by $p\mapsto \bft_0 \ast p$ where $\bft=\bft_0$ is the point in $T^{\an}$ corresponding to the trivial norm (see Subsubsection \ref{subsubsec_starmult}).

Let $U_{\sigma}$ be an affine toric variety with big torus $T$ given by a rational polyhedral cone $\sigma \subseteq N_{\R}$. Then we define the tropicalization $U_{\sigma}^{\trop}$ of $U_{\sigma}$ to be the topological monoid $\Hom(S_\sigma, \Rbar)$ where the homomorphisms are monoid homomorphisms that preserve scaling by elements of $\R_{\geq 0}$. 
There is a bijection
\[
\bigsqcup_{\textrm{$\tau$ a face of $\sigma$}} N_{\R}(\tau)\xlongrightarrow{\sim} \Hom\big(S_\sigma,\Rbar\big)
\]
given as follows: let $\lambda \in N_{\R}(\tau)$, we have that $N_{\R}(\tau) = \Hom(\tau^{\perp},\R)$, so define $\lambda \mapsto \tilde{\lambda}$ where
\[
\tilde{\lambda}(m)=  \begin{cases} 
          \lambda(m)& \textrm{ if } m\in \tau^{\perp}\cap \sigma^{\vee}\\
          \infty& \textrm{ else}.
       \end{cases}
\] Given this bijection, we will identify $U_{\sigma}^{\trop}=\Hom(S_\sigma,\Rbar)$ and the disjoint union of all the spaces $N(\tau)$.
By writing $U_{\sigma}$ as a union of orbits
\[
U_{\sigma}=\bigsqcup_{\textrm{$\tau$ a face of $\sigma$}} T(\tau)
\]
we have a map
\begin{equation*}\begin{split}
\trop_T: U_{\sigma}^{\an}&\longrightarrow U_{\sigma}^{\trop}\\
p&\longmapsto \big(m\mapsto -\log\nor{\chi^m}_p\big)
\end{split}\end{equation*}
that restricts to the tropicalization maps $T(\tau)\rightarrow T(\tau)^{\trop}=N(\tau)$ on all torus orbits. This is again a continuous map with a continuous section $J\colon U_{\sigma}^{\trop}\rightarrow U_{\sigma}^{\an}$ given by mapping $\lambda\in U_{\sigma}^{\trop}$ to $\bft_\lambda$, where
\[
\nor{\sum\limits_{m\in S_{\sigma}} c_m \chi^m}_{\bft_{\lambda}}=\max_{m\in S_{\sigma}}\nor{c_m}e^{-u(m)}.
\] 
Note that the image of the section  is again exactly the image of the map $U_{\sigma}^{\an}\rightarrow U_{\sigma}^{\an}$ given by $p\mapsto \bft_0 \ast p$. Given a point $p\in U_{\sigma}^{\an}$ the norm $\bft_0\ast p$ is given by:
\[
\nor{\sum\limits_{m\in S_{\sigma}} c_m \chi^m}_{\bft\ast p=}\max_{m\in S_{\sigma}} \nor{c_m}\cdot\nor{\chi^m}_p.
\] 

Given a fan $\Delta$ in $N_{\R}$ and two cones $\sigma,\sigma'$ intersecting in face $\tau$, we find that $U_{\tau}^{\trop}$ will be an open subset of both $U_{\sigma}^{\trop}$ and $U_{\sigma'}^{\trop}$. We can then glue the tropicalizations of $T$-stable open affine subvarieties and the tropicalization to obtain a tropicalization  $V(\Delta)^{\trop}$ of $V(\Delta)$ and tropicalization map $\trop_T:V(\Delta)^{\an}\rightarrow V(\Delta)^{\trop}$. This map will also have a section $V(\Delta)^{\trop}\rightarrow V(\Delta)^{\an}$ whose image is equal to the image of $m_{\bft}$.

We now recall a variant of the tropicalization map presented in \cite[Section 2]{Thuillier_toroidal}. For this we need to assume that $k$ is endowed with the trivial absolute value. Let $\sigma$ be a rational polyhedral cone in $N_{\R}$ and let $U_{\sigma}$ the associated toric variety. We say that $\Hom\big(S_\sigma, \Rbar_{\geq 0}\big)$ is the \textit{canonical compactification of $\sigma$} and denote it by $\overline{\sigma}$. We observe that the tropicalization map $\trop_{U_\sigma}\colon U_\sigma^{\an}\rightarrow U_\sigma^{\trop}$ restricts to a map $U_\sigma^\beth\rightarrow \sigmabar$. 

Let $\Delta$ be a rational polyhedral fan in $N_\R$. Given a cone $\sigma$ in $\Delta$ and a face $\tau$ of $\sigma$, the canonical compactification $\overline{\tau}$ naturally injects into $\overline{\sigma}$ as homomorphisms which are $0$ away from $\tau^{\perp}\cap\sigma^{\vee}$. So it makes sense to define the canonical compactification $\Deltabar$ of $\Delta$ as  the union
\[
\Deltabar=\bigcup_{\sigma \in \Delta}\overline{\sigma} \ ,
\] 
where the polyhedra $\overline{\sigma}$ are glued along shared faces according to how the cones intersect. 

For each face $\tau \subseteq \sigma$ recall that we have an inclusion $N(\tau)\hookrightarrow \overline{\sigma}$ as monoid homomorphisms which are $\infty$ away from $\tau^{\perp}\cap \sigma^{\vee}$. The fan $\textrm{star}(\tau)$ is actually just a cone in $N(\tau)$, and $\overline{\Sigma}_{U_{\sigma}}\cap \rho^{-1}(\eta_{T(\tau)})$ is the cone $\textrm{star}(\tau)$, and $N(\tau)\cap \overline{\Sigma}_{U_{\sigma}}$ is the canonical compactification of ${\textrm{star}(\tau)}$. So  the stratification of the boundary of $U_{\sigma}$ by toric subvarieties corresponds to a stratification of $\overline{\sigma}$ by the cones $\textrm{star}(\tau)$, for $\tau$ a face of $\sigma$. Furthermore this observation about the stratification of the boundary extends from affine toric varieties to any toric variety.

We refer to Figure \ref{Figure_CanonicalCompactificationFan} for a depiction of a canonically compactified fan.

\begin{figure}[!ht]
\centering
\tikzset{every picture/.style={line width=0.75pt}} 

\begin{tikzpicture}[x=0.75pt,y=0.75pt,yscale=-1,xscale=1]

\draw    (105.2,248.6) -- (184.2,248.6) ;
\draw    (67.2,170.6) -- (105.2,248.6) ;
\draw    (105.2,248.6) -- (146.2,170.6) ;
\draw    (250.2,250.6) -- (329.2,250.6) ;
\draw    (212.2,172.6) -- (250.2,250.6) ;
\draw    (250.2,250.6) -- (291.2,172.6) ;
\draw    (251.2,156.6) -- (212.2,172.6) ;
\draw    (251.2,156.6) -- (291.2,172.6) ;
\draw    (326.2,201.6) -- (329.2,250.6) ;
\draw    (326.2,201.6) -- (291.2,172.6) ;
\draw  [draw opacity=0][fill={rgb, 255:red, 155; green, 155; blue, 155 }  ,fill opacity=0.56 ] (251.2,156.6) -- (291.2,172.6) -- (280,193.91) -- (276.08,201.37) -- (250.2,250.6) -- (212.2,172.6) -- (212.2,172.6) -- cycle ;
\draw  [draw opacity=0][fill={rgb, 255:red, 155; green, 155; blue, 155 }  ,fill opacity=0.56 ] (326.2,201.6) -- (329.2,250.6) -- (250.2,250.6) -- (291.2,172.6) -- (291.2,172.6) -- cycle ;
\draw  [draw opacity=0][fill={rgb, 255:red, 155; green, 155; blue, 155 }  ,fill opacity=0.56 ] (146.2,170.6) -- (184.2,248.6) -- (105.2,248.6) -- (67.2,170.6) -- (104.2,170.6) -- cycle ;
\end{tikzpicture}
\caption{An example of a fan $\Delta$ and its  canonical compactification $\overline{\Delta}$.} \label{Figure_CanonicalCompactificationFan}
\end{figure}
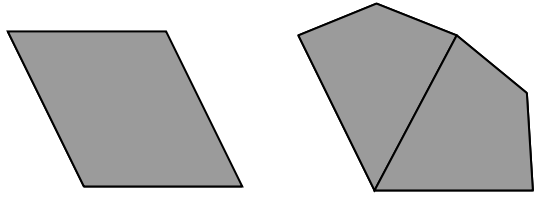

Finally, we observe that the tropicalization map $\trop_{U_\sigma}$ naturally restricts to a map $U_\sigma^\beth\rightarrow \overline{\sigma}$ and, subsequently, that the tropicalization map $\trop_{X(\Delta)}$ naturally restricts to a map $X(\Delta)^\beth\rightarrow \overline{\Delta}$.


\subsection{Toroidal embeddings and their tropicalization}\label{subsection_SkelOfTorEmbed}

Let $X$ be a normal and irreducible scheme that is locally of finite type over $k$ and $X_0\subseteq X$ a non-empty open subset making the embedding $X_0\hookrightarrow X$ into a \emph{toroidal embedding} (see e.g. \cite{KKMSD_toroidal, Thuillier_toroidal} for background on this notion). This means that around every point $p\in X$ there is an \'etale neighborhood $\delta\colon U\rightarrow X$ as well as an \'etale morphism $\gamma\colon U\rightarrow Z$ into a $T$-toric variety $Z$ with big torus $T$ such that $\gamma^{-1}(T)=\delta^{-1}(X_0)$. We refer to the pair $(\delta,\gamma)$ and, in a slight abuse of notation, to $\gamma$ itself as a \emph{toric chart} around $p$.

We will start by considering the special case that around every point the neighborhood $U$ can be chosen to be a Zariski open subset of $X$. In this case the toroidal embedding $X_0\hookrightarrow X$ is said to be \emph{without monodromy}. 

For $k\geq 1$ we inductively define $X_k$ to be the locally closed subset of $X$, whose points are the regular points of $X-(X_0\cup\cdots\cup X_{k-1})$. The connected components of $X_k$ form a stratification of $X$ by connected locally closed subsets, which we refer to as the \emph{toroidal strata} of $X$. We say that a toroidal embedding without monodromy is \emph{small}, if it has a unique closed stratum. Note that, by the orbit-cone correspondence, a toric variety is small if and only if it is affine. 

Let $U$ be a small open subset of $X$ (with the induced structure as a toroidal embedding $U_0:=U\cap X_0\hookrightarrow U$) and suppose $\gamma\colon U\rightarrow Z$ is a toric chart. By \cite[Lemme 3.6]{Thuillier_toroidal}, there is an open affine subset $Z'$ of $Z$ such that $\gamma$ induces a bijection between the strata of $U$ and $Z'$. This bijection preserves the order given by one stratum being in the closure of another (which is the usual order on torus-orbits on toric varieties). A toric chart $\gamma\colon U\rightarrow Z$ into an affine toric variety, where $U$ is small and $\gamma$ induces a bijection of strata is called a \emph{small toric chart}.

\begin{definition}
    A \emph{(rational polyhedral) cone complex} $\Sigma$ is a topological space $\vert\Sigma\vert$ together with a collection of closed subsets $\sigma_\alpha$, each of which is endowed with the structure of a (always sharp) rational polyhedral cone $\sigma_\alpha$ such that the following holds:
    \begin{enumerate}[(i)]
        \item The $\sigma_\alpha$ cover $\vert\Sigma\vert$, i.e. we have $\bigcup_{\alpha}\sigma_\alpha=\vert\Sigma\vert$.
        \item Given a cone $\sigma_\alpha$ in $\Sigma$, every face $\tau$ of $\sigma_\alpha$ is again a cone in $\Sigma$.
        \item The intersection $\sigma_\alpha\cap\sigma_\beta$ of two cones $\sigma_\alpha$ and $\sigma_\beta$ is a finite union of cones in $\Sigma$.
    \end{enumerate}
\end{definition}

A \emph{morphism} $f\colon\Sigma\rightarrow\Sigma'$ of rational polyhedral cone complexes is a continuous map $\vert f\vert\colon \vert\Sigma\vert\rightarrow\vert\Sigma'\vert$ such that for every cone $\sigma_\alpha$ in $\Sigma$ there is a cone $\sigma'_{\alpha'}$ in $\Sigma'$ such that $f(\sigma_\alpha)\subseteq \sigma'_{\alpha'}$ and the restriction of $f$ to $\sigma_\alpha\rightarrow \sigma'_{\alpha'}$ is a morphism of rational polyhedral cones. Then $f$ is said to be a \emph{strict morphism}, if for all $\sigma_\alpha$ the cone $\sigma'_{\alpha'}$ may chosen such that $f$ induces an isomorphism $\sigma_\alpha\xrightarrow{\sim} \sigma'_{\alpha'}$. 

Given a rational polyhedral cone complex $\Sigma$, its \emph{canonical compactification} is the (non-disjoint) union 
\begin{equation*}
    \Sigmabar=\bigcup_{\alpha}\sigmabar_\alpha \ , 
\end{equation*}
where $\sigmabar_\alpha=\Hom(S_\alpha,\Rbar_{\geq 0})$ is the canonical compactification of $\sigma_\alpha$. In this union we naturally identify $\overline{\tau}$ with a subset of $\sigmabar$, whenever $\tau$ is a face of $\sigma$.

To every toroidal embedding $X_0\hookrightarrow X$ we may associated cone complex $\Sigma_X$, whose cones are in an order-reversing one-to-one correspondence with the strata of $X$. To understand this construction we note that in \cite[Section 3]{Thuillier_toroidal} it is shown that for two small toric charts $\gamma_i\colon U_i\rightarrow Z_{\sigma_i}$ (with $i=1,2$), for which the two closed strata of $U_i$ generically agree, we have a natural isomorphism $\sigma_1\simeq \sigma_2$. Therefore there is a unique (sharp) rational polyhedral cone $\sigma_E$ associated to every toroidal stratum $E$ of $X$ and, whenever there is another stratum $E'$, whose closure contains $E$, we have that $\sigma_{E'}$ is a face of $\sigma_E$. The cone complex $\Sigma_X$ is then defined to be the union of the cones $\sigma_E$ taken over all strata $E$ of $X$.

There is a natural and continuous \emph{tropicalization map} $\trop_X\colon X^\beth\rightarrow \Sigmabar_X$ such that for every small toric chart $\gamma\colon U\rightarrow Z_{\sigma}$ the diagram
\begin{equation*}
    \begin{tikzcd}
        X^\beth \arrow[rrr,"\trop_X"]\arrow[rd,"\gamma^\beth"] &&& \Sigmabar_X\\
        & Z^\beth\arrow[rr,"\trop_Z"] && \sigmabar \arrow[u]
    \end{tikzcd}
\end{equation*}
commutes. Furthermore, \cite[Cor. 3.13]{Thuillier_toroidal} (also see \cite[Theorem 1.2]{Ulirsch_functroplogsch}) tells us that $\trop_X$ has a continuous section $J_X\colon \Sigmabar_X\rightarrow X^\beth$ making the composition $\bfp_X=J_X\circ \trop_X$ into a strong deformation retraction onto a closed subset $\frakS{}(X)$, the \emph{toroidal skeleton} of $X^\beth$.

\begin{remark}
We explicitly point out that the above-mentioned construction(s) from \cite{Ulirsch_functroplogsch} and \cite{Thuillier_toroidal} also work  in the case when $X$ is not quasi-paracompact using the workaround described in Remark \ref{remark_bethvsquasiparacompact} above. 
\end{remark}

\begin{remark}
The above story generalizes to general toroidal embeddings, possibly with self-intersection. Since this article does not strictly require these developments, we refer the reader to \cite{Thuillier_toroidal, ACP} for further details as well as to \cite{Ulirsch_functroplogsch} and \cite{Ulirsch_nonArchArtin} to a more general version of this story from the perspective of logarithmic geometry.
\end{remark}

There is an alternative point of view on the tropicalization of toroidal embeddings using \emph{Artin fans} (see \cite{Ulirsch_nonArchArtin} for further details). To a rational polyhedral cone $\sigma$ we associate its \emph{Artin cone} $\calA_\sigma$, which is given as the toric quotient stack
\begin{equation*}
\calA_\sigma=\big[U_\sigma/T\big], 
\end{equation*}
where $T=\Spec k[S_\sigma^{\gp}]$ is the big torus of the affine toric variety $U_\sigma=\Spec k[S_\sigma]$ defined by $\sigma$. Note that the Artin cone $\calA_\sigma$ only depends on the sharp monoid $\overline{S}_\sigma=S_\sigma/S_\sigma^\ast$. Given a face $\tau$ of $\sigma$, the natural open immersion $U_\tau\hookrightarrow U_\sigma$ induces an open immersion $\calA_\tau\hookrightarrow \calA_\sigma$. Let $\Sigma$ be a cone complex. Then we associate to $\Sigma$ the algebraic stack
\begin{equation*}
    \calA_\Sigma=\bigcup_{\sigma\in\Sigma}\calA_\sigma
\end{equation*}
glued according to the incidences provided by $\Sigma$. 

Now let $X_0\hookrightarrow X$ be a toroidal embedding and write $\calA_X=\calA_{\Sigma_X}$. Then there is a natural morphism $X\rightarrow \calA_X$ such that for every small chart $\gamma\colon U\rightarrow Z_\sigma$ the diagram
\begin{equation*}
    \begin{tikzcd}
        X\arrow[rrr] & & & \calA_X\\
        U\arrow[r]\arrow[u,"\subseteq"] &  Z_\sigma\arrow[r,"\quot"]&\big[Z_\sigma/T\big]\arrow[r,equal] &\calA_\sigma \arrow[u,"\subseteq"]
    \end{tikzcd}
\end{equation*}
commutes. This way, the toroidal strata of $X$ are precisely the preimages of the points of $\calA_\Sigma$ and the tropicalization map $\trop_X$ can be recovered by applying the $\beth$-functor to the morphism $X\rightarrow \calA_X$ by \cite[Theorem 1.1]{Ulirsch_nonArchArtin}.


\section{Stacky cone complexes and fans in the building}\label{section_stackyfans}

\subsection{Kummer homomorphisms and root stacks} There are several competing ways to formalize stacky fans as a combinatorial gadget to describe toric stacks. We refer the reader to \cite{GeraschenkoSatrianoI} for an overview of the different approaches. Here we present an approach using Kummer homomorphisms that is closely related to \cite{FantechiMannNeroni} and \cite{GillamMolcho}, and, in fact, a special case of the logarithmic root stack construction in \cite[Section 4]{BorneVistoli}.

Let $T$ be an algebraic torus over $k$ with character lattice $M$ and cocharacter lattice $N$ as above. 

\begin{definition}
Let $S_\sigma$ be the toric monoid associated to a rational polyhedral cone $\sigma$ in $T^{\trop}$. A \emph{Kummer homomorphism} is an injective homomorphism $\phi\colon S_\sigma\rightarrow \widetilde{S}_\sigma$ into a toric monoid $\widetilde{S}_\sigma$ that fulfils the property that for every $\widetilde{m}\in \widetilde{S}_\sigma$ there is an $n\in\N_{>0}$ such that $n\cdot \widetilde{m}=\phi_\sigma(m)$ for some $m\in S_\sigma$. 
\end{definition}

Let $\phi\colon S_\sigma\rightarrow\widetilde{S}_\sigma$ be a Kummer homomorphism. Set $\widetilde{M}=\widetilde{S}_\sigma^{\gp}$ and write $\widetilde{N}=\Hom(M,\Z)$ for its dual. Then $\phi_\sigma$ induces a natural homomorphism $\widetilde{N}\rightarrow N$. The preimage of $\sigma$ under this map is again a rational polyhedral cone $\widetilde{\sigma}$ that maps bijectively onto $\sigma$ and can also be characterized as $\widetilde{\sigma}=\Hom(\widetilde{S}_\sigma,\R_{\geq 0})$; note that in this notation we tautologically have $S_{\widetilde{\sigma}}=\widetilde{S}_\sigma$. 

The homomorphism $\phi_\sigma: S_\sigma\rightarrow \widetilde{S}_\sigma$ induces a toric morphism $U_{\widetilde{\sigma}}\rightarrow U_\sigma$, which, in turn, descends to a representable morphism $\calA_{\widetilde{\sigma}} \longrightarrow \calA_\sigma$
of toric quotient stacks 
\begin{equation*}\calA_{\widetilde{\sigma}}=\calA_{S_{\widetilde{\sigma}}}=\big[U_{\widetilde{\sigma}}\big/\widetilde{T}\big]\qquad \textrm{ and } \qquad \calA_{\sigma}=\calA_{S_\sigma}=\big[U_\sigma\big/T\big]\ .
\end{equation*} 
Here we write $\widetilde{T}$ for the big torus $\Spec \C[\widetilde{M}]$ of $U_{\widetilde{\sigma}}$. 

\begin{definition}
The \emph{affine toric root stack} $\calU_{(\sigma,\phi)}$ associated to a Kummer homomorphism $\phi\colon S_{\sigma}\rightarrow\widetilde{S}_\sigma$ is defined by the $2$-fibered product 
\begin{equation*}\begin{tikzcd}
\calU_{(\sigma,\phi)} \arrow[r]\arrow[d] & \calA_{\widetilde{\sigma}}\arrow[d]\\
U_\sigma \arrow[r] & \calA_\sigma 
\end{tikzcd}\ .\end{equation*}
\end{definition}

The affine toric root stack $\calU_{(\sigma,\phi)}$ naturally fits in between $U_{\widetilde{\sigma}}$ and $U_{\sigma}$ in the sense that there is a factorization 
\begin{equation*}
U_{\widetilde{\sigma}}\longrightarrow \calU_{(\sigma,\phi)}\longrightarrow U_\sigma
\end{equation*} 
where the first arrow is a surjective \'etale morphism and the second arrow makes $U_\sigma$ into a coarse moduli space of $\calU_{(\sigma,\phi)}$. In fact, the kernel $A=\ker (\widetilde{T}\rightarrow T)$ is a finite abelian group given by 
\begin{equation*}
A=\Spec k\big[\coker(M\rightarrow \widetilde{M})\big]
\end{equation*} and we have  
\begin{equation*}
\calU_{(\sigma,\phi)}=\big[U_{\widetilde{\sigma}}\big/A\big] \qquad \textrm{ as well as }\qquad U_\sigma=U_{\widetilde{\sigma}}/A \ . 
\end{equation*}
So there is natural operation of the torus $T=\widetilde{T}/A$ on $\calU_{\sigma, \phi_{\sigma}}$ with a unique dense open orbit that is isomorphic to $T$.

\begin{lemma}\label{lemma_Kummerface}
Let $\sigma$ be rational polyhedral cone in $N_\R$, let $\phi\colon S_\sigma\rightarrow \widetilde{S}_\sigma$ be a Kummer homomorphism and $\tau$ a face of $\sigma$. Then there is a unique Kummer homomorphism $\phi\vert_\tau\colon S_\tau\rightarrow \widetilde{S}_\tau$ that makes the diagram
\begin{equation*}
    \begin{tikzcd}
        \widetilde{S}_\tau & &\arrow[ll] \widetilde{S}_\sigma\\
        S_\tau \arrow[u,"\phi\vert_\tau"]& & \arrow[ll] S_\sigma\arrow[u,"\phi"']
    \end{tikzcd}
\end{equation*}
commute. 
\end{lemma}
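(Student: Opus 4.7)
The plan is to build $\widetilde{S}_\tau$ together with $\phi\vert_\tau$ explicitly by restricting the data for $\sigma$ along the face $\tau$, and then verify the Kummer property and uniqueness. First, I would use the Kummer condition on $\phi$ to conclude that the induced group homomorphism $\phi^{\gp}\colon S_\sigma^{\gp}\to \widetilde{M}$ has finite cokernel, which makes the dual map $\widetilde{N}_{\R}\to N_{\R}$ a linear isomorphism restricting to a bijection $\widetilde{\sigma}\xrightarrow{\sim}\sigma$ of cones. Under this bijection, the face $\tau$ of $\sigma$ lifts uniquely to a face $\widetilde{\tau}$ of $\widetilde{\sigma}$, and I would define
\[
\widetilde{S}_\tau := \widetilde{\tau}^\vee \cap \widetilde{M},
\]
with $\phi\vert_\tau$ set to be the restriction of $\phi^{\gp}$ to $S_\tau$. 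The horizontal inclusions $S_\sigma\hookrightarrow S_\tau$ and $\widetilde{S}_\sigma\hookrightarrow \widetilde{S}_\tau$ coming from $\sigma^\vee \subseteq \tau^\vee$ and $\widetilde{\sigma}^\vee \subseteq \widetilde{\tau}^\vee$ make the square commute tautologically.

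Next, I would check that $\phi\vert_\tau$ actually lands in $\widetilde{S}_\tau$ and is Kummer. Containment is a duality computation: for $m\in S_\tau=\tau^\vee\cap M$ and $\tilde{n}\in\widetilde{\tau}$ with image $n\in\tau$, we have $\langle\phi^{\gp}(m),\tilde{n}\rangle=\langle m,n\rangle\ge 0$, so $\phi^{\gp}(m)\in\widetilde{\tau}^\vee$, and injectivity is inherited from $\phi^{\gp}$. For the Kummer property, given any $\tilde{m}\in \widetilde{S}_\tau$, the finiteness of $\widetilde{M}/\phi^{\gp}(S_\sigma^{\gp})$ supplies an integer $\ell>0$ with $\ell\tilde{m}=\phi^{\gp}(m)$ for a unique $m$ in the group completion; the bijection $\widetilde{\sigma}\to\sigma$ identifies $\widetilde{\tau}^\vee\cap \phi^{\gp}(S_\sigma^{\gp})$ with $\tau^\vee\cap M=S_\tau$, forcing $m\in S_\tau$ as required.

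Finally, uniqueness is essentially automatic: any Kummer homomorphism $\psi\colon S_\tau\to \widetilde{S}_\tau'$ fitting into the commutative square extends to a group homomorphism $\psi^{\gp}$ that must agree with $\phi^{\gp}$ on $S_\sigma$ and hence on $S_\sigma^{\gp}=S_\tau^{\gp}$; this determines $\psi$ uniquely as the restriction of $\phi^{\gp}$, and the Kummer condition then pins down $\widetilde{S}_\tau'$ as $\widetilde{\tau}^\vee\cap\widetilde{M}$. The main subtlety to navigate is the correct identification of $\widetilde{\tau}$ as the unique face of $\widetilde{\sigma}$ mapping bijectively to $\tau$ under $\widetilde{N}_\R \to N_\R$, together with the matching duality identity $\widetilde{\tau}^\vee\cap\phi^{\gp}(S_\sigma^{\gp})=\phi^{\gp}(\tau^\vee\cap M)$; once these are set up cleanly, the rest reduces to routine monoid bookkeeping.
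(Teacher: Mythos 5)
Your proof is correct and takes the same route as the paper: both define $\widetilde{S}_\tau$ by intersecting $\tau^\vee$ (equivalently $\widetilde{\tau}^\vee$) with the overlattice $\widetilde{M}$, deduce the Kummer property from the finite index of $M\hookrightarrow\widetilde{M}$, and observe that uniqueness follows since $S_\sigma^{\gp}=S_\tau^{\gp}=M$. You merely spell out the duality and saturation checks that the paper declares "automatic."
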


\begin{proof}
We define $\widetilde{S}_\tau=\widetilde{M}\cap S_\tau$. Since the inclusion homomorphism $M\rightarrow \widetilde{M}$ is of finite index, the inclusion $\phi\vert_\tau\colon S_\tau\hookrightarrow \widetilde{S}_\tau$ is a Kummer homomorphism. The commutativity of the diagram and the uniqueness of the Kummer homomorphism $\phi\vert_\tau\colon S_\tau\rightarrow \widetilde{S}_\tau$ is then automatic.
\end{proof}

For a rational polyhedral cone $\sigma$ in $N_\R$ we write $S_\sigma^\ast$ for its subgroup of units and $\overline{S}_\sigma$ for the quotient monoid $S_\sigma/S_\sigma^\ast$. The monoid $\overline{S}_\sigma$ is again a toric monoid associated to the cone $\sigma$, now as a rational polyhedral cone in $\Span(\sigma)\subseteq N_\R$. On the level of affine toric varieties the operation $S_\sigma\leadsto \overline{S}_\sigma$ corresponds to removing any torus factors from $U_\sigma$. We write $\overline{U}_\sigma$ for the resulting affine toric varieties with big torus $\overline{T}=\Spec \C[\overline{M}]$ with character lattice $\overline{M}=(\overline{S}_\sigma)^{\gp}$. 

\begin{lemma}
Let $\sigma$ be rational polyhedral cone in $N_\R$ and let $\phi\colon S_\sigma\rightarrow\widetilde{S}_\sigma$ be a Kummer homomorphism into $\widetilde{S}_\sigma=S_{\widetilde{\sigma}}$. The induced homomorphism $\overline{\phi}\colon \overline{S}_\sigma\rightarrow \overline{S}_{\widetilde{\sigma}}$ is a again a Kummer homomorphism and, for any choice of section of $S_\sigma\rightarrow\overline{S}_\sigma$, there is a $2$-cartesian square 
\begin{equation*}\begin{tikzcd}
\calU_{(\sigma,\phi)} \arrow[r]\arrow[d] & \calA_{\overline{S}_{\widetilde{\sigma}}}\arrow[d]\\
U_\sigma \arrow[r] & \calA_{\overline{S}_\sigma} \ .
\end{tikzcd}\end{equation*}
\end{lemma}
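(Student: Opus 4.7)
The plan is to handle the two claims sequentially, first verifying that $\overline{\phi}$ is a Kummer homomorphism and then deducing the $2$-cartesian property from the defining pullback square via a pasting argument.

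For the Kummer property of $\overline{\phi}$, I would start by observing that $\phi$ sends units to units: if $\pm m \in S_\sigma$ then $\pm \phi(m) \in \widetilde{S}_\sigma$, so $\phi(m) \in \widetilde{S}_\sigma^\ast$, which makes $\overline{\phi}$ well-defined. Setting $V := \Span(\sigma)$ and using the intrinsic identifications $S_\sigma^\ast = V^\perp \cap M$ and $\widetilde{S}_\sigma^\ast = V^\perp \cap \widetilde{M}$, the kernel of the composite $M \hookrightarrow \widetilde{M} \twoheadrightarrow \overline{\widetilde{M}}$ is $M \cap V^\perp = S_\sigma^\ast$, so $\overline{M} \hookrightarrow \overline{\widetilde{M}}$ is injective and hence so is $\overline{\phi}$. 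The Kummer index condition follows by lifting: given $\overline{\widetilde{m}} \in \overline{S}_{\widetilde{\sigma}}$, choose a preimage $\widetilde{m} \in \widetilde{S}_\sigma$, apply the Kummer property of $\phi$ to produce $n > 0$ and $m \in S_\sigma$ with $n\widetilde{m} = \phi(m)$, and project to $\overline{S}_{\widetilde{\sigma}}$.

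For the $2$-cartesian square, I would proceed by a ``$2$-out-of-$3$'' argument. The section $s$ induces a splitting $S_\sigma \cong S_\sigma^\ast \oplus s(\overline{S}_\sigma)$ and hence $U_\sigma \cong T^\ast \times \overline{U}_\sigma$ as well as $T \cong T^\ast \times \overline{T}$ with $T^\ast := \Spec k[S_\sigma^\ast]$; quotienting yields an equivalence $\calA_\sigma = [U_\sigma/T] \simeq [\overline{U}_\sigma/\overline{T}] = \calA_{\overline{S}_\sigma}$ under which the target bottom arrow $U_\sigma \to \calA_{\overline{S}_\sigma}$ factors as the natural $U_\sigma \to \calA_\sigma$ followed by this equivalence. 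Constructing a compatible section $\widetilde{s}\colon \overline{S}_{\widetilde{\sigma}} \to \widetilde{S}_\sigma$ with $\widetilde{s} \circ \overline{\phi} = \phi \circ s$ produces an analogous equivalence $\calA_{\widetilde{\sigma}} \simeq \calA_{\overline{S}_{\widetilde{\sigma}}}$ making the square
\begin{equation*}
\begin{tikzcd}
\calA_{\widetilde{\sigma}}\arrow[r, "\sim"]\arrow[d] & \calA_{\overline{S}_{\widetilde{\sigma}}}\arrow[d]\\
\calA_\sigma \arrow[r, "\sim"] & \calA_{\overline{S}_\sigma}
\end{tikzcd}
\end{equation*}
$2$-commute. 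Pasting this with the defining $2$-cartesian square of $\calU_{(\sigma,\phi)}$ then shows that the target square is $2$-cartesian.

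The main technical point is the construction of the compatible section $\widetilde{s}$. Writing $L := s(\overline{M}) \subseteq M \subseteq \widetilde{M}$, the splitting $M = S_\sigma^\ast \oplus L$ forces $L \cap \widetilde{S}_\sigma^\ast \subseteq L \cap S_\sigma^\ast = 0$. The saturation $L^{\textrm{sat}} := \{x \in \widetilde{M} : nx \in L \textrm{ for some } n > 0\}$ inherits trivial intersection with $\widetilde{S}_\sigma^\ast$, has the same rank as $\overline{\widetilde{M}}$, and projects onto all of $\overline{\widetilde{M}}$ as the primitive closure of the finite-index subgroup $\overline{M} \subseteq \overline{\widetilde{M}}$. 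This yields a splitting $\widetilde{M} = \widetilde{S}_\sigma^\ast \oplus L^{\textrm{sat}}$ extending $M = S_\sigma^\ast \oplus L$, and the associated group section $\widetilde{s}^{\gp}\colon \overline{\widetilde{M}} \xrightarrow{\sim} L^{\textrm{sat}} \hookrightarrow \widetilde{M}$ restricts to the desired monoid section $\widetilde{s}\colon \overline{S}_{\widetilde{\sigma}} \to \widetilde{S}_\sigma$, since shifting any lift of an element of $\overline{S}_{\widetilde{\sigma}}$ by a unit in $\widetilde{S}_\sigma^\ast \subseteq \widetilde{S}_\sigma$ stays inside the monoid. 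Once this linear-algebraic step is in place, the rest of the argument is formal.
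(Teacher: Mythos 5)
The first half of your argument, showing that $\overline{\phi}$ is again a Kummer homomorphism, is correct and proceeds by the expected direct verification.

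The second half, however, contains a genuine gap: the claim that $\widetilde{M} = \widetilde{S}_\sigma^\ast \oplus L^{\textrm{sat}}$ fails in general, and in fact a compatible section $\widetilde{s}$ with $\widetilde{s}\circ\overline{\phi}=\phi\circ s$ need not exist. The error is in the sentence asserting that $L^{\textrm{sat}}$ ``projects onto all of $\overline{\widetilde{M}}$.'' Saturating $L$ inside $\widetilde{M}$ is not the same as taking the saturation of its image in $\overline{\widetilde{M}}$: the image of $L^{\textrm{sat}}$ is a subgroup containing $\overline{M}$, but it may still have finite index. Concretely, take $M=\Z e_1\oplus\Z e_2$, $\sigma=\R_{\geq0}e_1^\ast$, so $S_\sigma=\Z_{\geq0}e_1\oplus\Z e_2$ and $S_\sigma^\ast=\Z e_2$; take $\widetilde{M}=\Z f_1\oplus\Z e_2$ with $e_1=2f_1$ and $\widetilde{S}_\sigma=\Z_{\geq0}f_1\oplus\Z e_2$; and take the section $s(\overline{e}_1)=e_1+5e_2$, so $L=\Z(2f_1+5e_2)$. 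Since $\gcd(2,5)=1$, the generator $2f_1+5e_2$ is already primitive in $\widetilde{M}$, so $L^{\textrm{sat}}=L$ and $\widetilde{S}_\sigma^\ast+L^{\textrm{sat}}=\Z e_2+\Z(2f_1+5e_2)$ has index $2$ in $\widetilde{M}$. Worse, no compatible $\widetilde{s}$ exists: any monoid section of $\widetilde{S}_\sigma\to\overline{S}_{\widetilde{\sigma}}$ sends $\overline{f}_1\mapsto f_1+ce_2$ for some $c\in\Z$, so $\widetilde{s}\bigl(\overline{\phi}(\overline{e}_1)\bigr)=\widetilde{s}(2\overline{f}_1)=2f_1+2ce_2$, which can never equal $\phi(s(\overline{e}_1))=2f_1+5e_2$.

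The lemma is nevertheless true, and your pasting strategy can be salvaged, but the Artin-cone square you wrote $2$-commutes only up to a nontrivial $2$-morphism, not via a strict equality of equivariant maps. With any choice of $\widetilde{s}$, the two composites $\calA_{\widetilde{\sigma}}\rightrightarrows\calA_{\overline{S}_\sigma}$ are presented by equivariant pairs whose group parts $\varphi_1,\varphi_2\colon\widetilde{T}\to\overline{T}$ differ by a character lying in $\widetilde{S}_\sigma^\ast$, and the corresponding unit $\alpha\in\calO^\ast(U_{\widetilde{\sigma}})$ supplies the required $2$-morphism; in the example above, $\alpha=\chi^{(2c-5)e_2}$. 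This cocycle argument is exactly what the paper's one-line proof sidesteps by citing Olsson's Proposition 5.17, which works directly at the level of characteristic (sharp) log structures, where the unit part that obstructs your compatible section simply never enters.
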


So, in order to describe the affine toric root stack $\calU_{(\sigma,\phi)}$ it is enough to give a Kummer homomorphism $\overline{S}_\sigma\rightarrow \overline{S}_{\widetilde{\sigma}}$. 

\begin{proof}
This is a consequence of \cite[Proposition 5.17]{Olsson_loggeo&algstacks}. 
\end{proof}


\subsection{Stacky cone complexes and toroidal embeddings}\label{section_stackycc&toroidal}
\begin{definition}
A \emph{stacky (rational polyhedral) cone complex} is pair $(\Sigma, \Phi)$ consisting of a (rational polyhedral) cone complex $\Sigma$ and family of Kummer homomorphisms $\phi_\sigma\colon  S_\sigma\rightarrow \widetilde{S}_\sigma$, one for each $\sigma$ in $\Sigma$, such that, whenever $\tau$ is a face of $\sigma$, we have $\phi_\sigma\vert_\tau=\phi_\tau$. 
\end{definition}

The datum of a stacky cone complex $(\Sigma,\Phi)$ gives rise to another cone complex $\widetilde{\Sigma}$, whose underlying topological space is equal to $\vert\Sigma\vert$ and in which we replace every cone $\sigma$ by $\widetilde{\sigma}$ as above. This way we obtain a natural morphism $\widetilde{\Sigma}\rightarrow \Sigma$ of cone complexes that is a bijection on the underlying topological spaces. 

Let $X_0\hookrightarrow X$ be a toroidal embedding without self-intersection and denote by $\Sigma_X$ its associated cone complex. Suppose we are also given a collection $\Phi$ of Kummer homomorphisms making $(\Sigma_X,\Phi)$ into a stacky cone complex. 

Denote by $\calA_{\widetilde{\Sigma}}\rightarrow \calA_X$ morphism of Artin fans induced by $\widetilde{\Sigma}\rightarrow\Sigma$. 

\begin{definition}\label{def_stackytoroidalembeddings}
    The \emph{(stacky) toroidal embedding} $\calX(\Phi)$ associated to $(X,\Phi)$ is defined by the $2$-fibered product
        \begin{equation*}\begin{tikzcd}
            \calX(\Phi) \arrow[r]\arrow[d] & \calA_{\widetilde{\Sigma}}\arrow[d]\\
            X \arrow[r] & \calA_\Sigma \ .
        \end{tikzcd}\ .\end{equation*}
\end{definition} 

By \cite[Proposition 3.8]{Ulirsch_StackQuotient}, the underlying topological space of $\calX(\Phi)^\beth$ is equal to $X^\beth$. Hence we have a natural and continuous tropicalization map $\trop_{\calX(\Phi)}\colon \calX(\Phi)^\beth\rightarrow \widetilde{\Sigma}_{\calX(\Phi)}$ that makes the diagram
\begin{equation*}
    \begin{tikzcd}
        \calX(\phi)^\beth \arrow[rr,"\trop_{\calX(\phi)}"]\arrow[d] & & \overline{\widetilde{\Sigma}}\arrow[d]\\
        X^\beth \arrow[rr,"\trop_{X}"] & & \Sigmabar
    \end{tikzcd}
\end{equation*}
commute. Note that this tropicalization only agrees with $\trop_X\colon X^\beth\rightarrow \Sigmabar_X$ on the underlying topological spaces up to rescaling. In particular, there is a section of $\trop_{\calX(\Phi)}$ that identifies $\trop_{\calX(\Phi)}$ with a strong deformation retraction onto a closed subset $\frakS(\calX(\Phi))$ of $\calX(\Phi)^\beth$, the \emph{non-Archimedean skeleton} of $\calX(\Phi)^\beth$. From the perspective of Artin fans, \cite[Theorem 1.1]{Ulirsch_nonArchArtin} implies that we may identify $\trop_{\calX(\Phi)}$ with the underlying continuous map obtained by applying the $\beth$-functor to the morphism $\calX(\Phi)\rightarrow \calA_{\widetilde{\Sigma}}$.

\begin{remark}
    Stacky toroidal embeddings in the sense of Definition \ref{def_stackytoroidalembeddings} are special cases of toroidal embeddings of Deligne--Mumford stacks, as introduced in \cite[Definition 6.1.1]{ACP}, and the retraction/tropicalization map is a special case of the retraction/tropicalization map of these toroidal embeddings. We refer the reader to \cite[Section 6]{ACP} for more details as well as to \cite[Section 6]{Ulirsch_nonArchArtin} for an alternative perspective on this using methods from logarithmic geometry. 
\end{remark}


\subsection{Stacky fans in $G^{\build}$} 
Let $G$ be a reductive algebraic group over $k$. As in Section \ref{subsec_BTbuildings}, we define the \emph{extended affine building $G^{\build}$} as the colimit 
\begin{equation*}
G^{\build}:=\varinjlim_{T\le G} T^{\trop}
\end{equation*}
in the category of sets, taken over all tori $T$ in $G$. As explained in \cite[Section 5.4]{Berkovich_book}, the extended affine building $G^{\build}$ admits a natural injection into the Berkovich analytification $G^{\an}$ of $G$ (see Subsection \ref{subsec_BTbuildings} for details). The topology on $G^{\an}$ is induced from the one on $G^{\an}$. There is another finer topology that we also work with, the topology induced by the colimit, in either case the topolgoy of $G^{\build}$ agrees with the Euclidian topology when restricted to the image of $T^{\trop}$ in $G^{\build}$ for a given torus $T$.  
Since for two tori $T$ and $T'$ in $G$ with $T\le T'$, the induced $\R$-linear homomorphism $T^{\trop}\rightarrow (T')^{\trop}$ is injective, we find that that each $T^{\trop}$ maps homeomorphically onto its image in $G^{\build}$. So we may identify $T^{\trop}$ with a closed subset of $G^{\build
}$, to which we refer as an \emph{apartment} of $G^{\build}$.

\begin{definition}\label{Defintion_ConeInGBuilding}
A \emph{(rational polyhedral) cone} in $G^{\build
}$ is a subset $\sigma\subseteq G^{\build}$ such that there is a torus $T$ in $G$, for which $\sigma\subseteq T^{\trop}$ and $\sigma$ is a rational polyhedral cone in $T^{\trop}$. 
\end{definition}

\begin{definition}\label{Definition_BuildingFan}
A \emph{fan} $\Delta$ in $G^{\build}$ is a collection of cones $\sigma$ in $G^{\build}$ that fulfil: 
\begin{enumerate}[(i)]
\item Given a cone $\sigma\in \Delta$ and a face $\tau$ of $\sigma$, then $\tau\in \Delta$.
\item For two cones $\sigma$ and $\sigma'$ in $\Delta$, the intersection $\sigma\cap\sigma'$ is a face of both $\sigma$ and $\sigma'$.
\end{enumerate}
\end{definition}

When $G$ already is a torus $T$, a fan in $T^{\build}=T^{\trop}$ is nothing but a fan in the usual sense (as in the theory of toric varieties), with the exception that $\Delta$ is usually assumed to be a finite set. This motivates us to call a fan $\Delta$ in $G^{\build}$ \emph{apartment-wise finite} if, for every torus $T$ in $G$, the subfan
\begin{equation*}
\Delta_T=\big\{\sigma\in\Delta\big\vert\sigma \subseteq T^{\trop}\neq\emptyset\big\}
\end{equation*}
is finite.

\begin{definition}\label{def_stackyfan}
A \emph{stacky fan} in $G^{\trop}$ is a fan $\Delta$ together with a collection $\Phi$ of Kummer homomorphism $\phi_\sigma\colon \overline{S}_\sigma\rightarrow \overline{S}_{\widetilde{\sigma}}$ for every cone $\sigma\in \Delta$ such that for every face $\tau$ of $\sigma$ we have 
\begin{equation*}
\phi_\tau=\overline{\phi_\sigma\vert_\tau}
\end{equation*}
for the induced Kummer homomorphism $\phi_{\sigma}\vert_\tau$ from Lemma \ref{lemma_Kummerface}.\end{definition}

One may think of both $\sigma\longmapsto \overline{S}_\sigma$ and $\sigma\longmapsto \overline{S}_{\widetilde{\sigma}}$ as as sheaves $\calS$ and $\widetilde{\calS}$ of (sharp) monoids on $\Delta$ (endowed with the poset topology). Then the condition in Definition \ref{def_stackyfan} says that the collection $\phi_{(.)}$ of Kummer homomorphisms is a morphism $\calS\rightarrow \widetilde{\calS}$ of sheaves of monoids on $\Delta$.

If $(\Delta,\Phi)$ is a fan in $G^{\build}$ then the union of the cones in $\Delta$ forms a cone complex (and in fact a stacky cone complex when equipped with the Kummer homomorphisms $\Phi$). We denote this cone complex $\Sigma(\Delta,\Phi)$.

\section{Toroidal bordifications of reductive groups}\label{section_toroidalbord}

Let $G$ be a reductive group and fix a stacky fan $(\Delta,\Phi)$ in $G^{\build}$. In this section we construct a toroidal bordification $\calX_G(\Delta, \Phi)$ associated to $(\Delta,\Phi)$ expanding on \cite[Section IV.2]{KKMSD_toroidal} and prove its basic properties, as stated in Theorem \ref{mainthm_stackyfan=toroidalbordification}.

Let $B$ be a Borel subgroup of $G$ and $T\subseteq B$ a maximal torus. Consider a rational polyhedral cone $\sigma$ in $T^{\trop}$ and a Kummer homomorphism $\phi_\sigma\colon S_\sigma\rightarrow S_{\widetilde{\sigma}}$. 

\begin{lemma}\label{lemma_Borel}
The pushout 
\begin{equation*}
\calB(\sigma,\phi_\sigma)=\calU_{\sigma,\phi_\sigma} \times^T B
\end{equation*}
 exists as a Deligne-Mumford stack and is a pushout for all subtori $T'\leq T$ such that $\sigma\subseteq (T')^{\trop}$.
\end{lemma}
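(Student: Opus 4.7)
The plan is to realize $\calB(\sigma,\phi_\sigma)$ explicitly as a quotient stack, deduce the Deligne--Mumford property from the freeness of the $T$-action on $B$, and then handle the independence of the ambient torus by a direct computation using the splitting of $\calU^T_{\sigma,\phi_\sigma}$ along the units of $S_\sigma$.

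For existence, I would unwind $\calU_{\sigma,\phi_\sigma} = \bigl[U_{\widetilde{\sigma}}/A\bigr]$ with $A=\ker(\widetilde{T}\to T)$ finite diagonalizable, and form
\begin{equation*}
    \calU_{\sigma,\phi_\sigma} \times^T B \;=\; \bigl[\,U_{\widetilde{\sigma}}\times B \,\big/\, \widetilde{T}\,\bigr],
\end{equation*}
where $\widetilde{T}$ acts toric-fashion on $U_{\widetilde{\sigma}}$ and on $B$ by inverse left multiplication through $\widetilde{T}\to T\hookrightarrow B$. Since $B = T\ltimes B_u$, left multiplication gives a free $T$-action on $B$ with $T\backslash B \cong B_u$, so the diagonal $\widetilde{T}$-action on $U_{\widetilde{\sigma}}\times B$ has stabilizers contained in the finite group $A$. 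The resulting quotient stack is therefore Deligne--Mumford. Equivalently, one can view $\calB(\sigma,\phi_\sigma)$ as a twisted $\calU_{\sigma,\phi_\sigma}$-bundle over $B_u$ associated to the $T$-torsor $B\to B_u$, which trivializes Zariski-locally and is therefore immediately seen to be Deligne--Mumford.

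For the independence statement, suppose $T'\le T$ with $\sigma\subseteq (T')^{\trop}$. Since short exact sequences of tori split, I choose a complement $T''$ with $T\cong T'\times T''$. Writing $U_\sigma^T = U_\sigma^{T'}\times T''$ (the factor $T''$ corresponding to those elements of $S_\sigma^T$ which become units because $\sigma\subseteq N'_\R$), and using that the Kummer datum $\Phi$ is defined on the sharp monoid $\overline{S}_\sigma$ so that $A$ acts trivially on the $T''$-factor, one obtains a canonical identification
\begin{equation*}
    \calU^T_{\sigma,\phi_\sigma} \;\cong\; \calU^{T'}_{\sigma,\phi_\sigma}\times T''.
\end{equation*}
Substituting into the contracted product and absorbing the $T''$-factor via $T''\times^{T''} B \cong B$ yields
\begin{equation*}
    \calU^T_{\sigma,\phi_\sigma} \times^T B \;\cong\; \calU^{T'}_{\sigma,\phi_\sigma}\times^{T'}\bigl(T''\times^{T''} B\bigr) \;\cong\; \calU^{T'}_{\sigma,\phi_\sigma}\times^{T'} B,
\end{equation*}
which is the required identification. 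The main obstacle is verifying that this is canonical, i.e.\ independent of the choice of complement $T''$. The clean way to handle this is to phrase the contracted product as the $2$-categorical pushout of the diagram $B\leftarrow T\to\calU_{\sigma,\phi_\sigma}$, with $T\to\calU_{\sigma,\phi_\sigma}$ the open orbit map; the universal property then shows that the composition $\calU^{T'}_{\sigma,\phi_\sigma}\hookrightarrow\calU^T_{\sigma,\phi_\sigma}\to\calB(\sigma,\phi_\sigma)$ coequalizes the $T'$-action and thus realizes $\calB(\sigma,\phi_\sigma)$ as the pushout of $B\leftarrow T'\to\calU_{\sigma,\phi_\sigma}^{T'}$ as well, without any reference to the chosen splitting.
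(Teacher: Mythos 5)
Your proposal is correct and follows essentially the same route as the paper: existence rests on the semidirect product $B\cong B_u\rtimes T$, and independence of the ambient torus rests on choosing a splitting $T\cong T'\times T''$ and the identification $\calU^T_{\sigma,\phi_\sigma}\cong T''\times\calU^{T'}_{\sigma,\phi_\sigma}$. The only cosmetic differences are that the paper gets the Deligne--Mumford property more directly by observing $\calB(\sigma,\phi_\sigma)\cong B_u\times\calU_{\sigma,\phi_\sigma}$ (a product of a variety with a DM stack) rather than passing through the quotient presentation $\bigl[U_{\widetilde\sigma}\times B/\widetilde T\bigr]$ with a finite-stabilizer check, and that the paper does not pause to address canonicity of the splitting as you do; your observation that the contracted product should be characterized by a universal property so as to remove reference to $T''$ is a reasonable tightening, though the paper is content to record the chain of natural isomorphisms.
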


\begin{proof}
By \cite[Theorem 10.6]{Borel} the Borel subgroup $B$ is a semidirect product
\begin{equation*}
B\cong U\rtimes T
\end{equation*}
for a unipotent subgroup $U$ of $B$. Thus (not taking the group structure into account), we find
\begin{equation*}
\calB(\sigma,\phi_\sigma)\cong U\times  \calU_{\sigma,\phi_\sigma}
\end{equation*}
and this proves the existence of $\calU_{\sigma,\phi_\sigma}\times^{T} B$.

Now let $T'\subseteq T$ be a subtorus such that $\sigma\subseteq (T')^{\trop}$ and write $\calU'_{\sigma,\phi_\sigma}$ for the affine toric root stack associated to $(\sigma,\phi_\sigma)$ with big open torus $T'$. Then we have natural isomorphisms
\begin{equation*}
\calU_{\sigma,\phi_\sigma} \times^T B\cong \calU'_{\sigma,\phi_\sigma}\times^{T'}T\times^T B\cong \calU'_{\sigma,\phi_\sigma} \times^{T'} B \ ,
\end{equation*}
since we may choose a splitting $T\cong T'\oplus T''$ of $T$ and find $\calU_{\sigma,\phi_\sigma}\cong T'' \times \calU'_{\sigma,\phi_\sigma}$, as long as $\sigma\subseteq T'$. But this shows $\calU_{\sigma,\phi_\sigma}\cong \calU'_{\sigma,\phi_\sigma}\times^{T'}T$.
\end{proof}

We now define a partial bordification of $G$ itself by setting
\begin{equation*}\label{eq_BordificationOfG}
\calX_G(\sigma,\phi_\sigma)= \calB(\sigma,\phi_\sigma)\times^B G \ ,
\end{equation*}
which exists, since $G\rightarrow B\backslash G$ is a Zariski-locally trivial principal $B$-bundle over the projective variety $B\backslash G$ and $\calX_G(\sigma,\phi_\sigma)\times^B G$ is the associated fiber bundle with fiber $\calB(\sigma,\phi_\sigma)$. 

Note, however, that we also have natural isomorphisms
\begin{equation*}
\calX_G(\sigma,\phi_\sigma)= \calB(\sigma,\phi_\sigma)\times^B G \cong \big(B\times^T \calU_{\sigma,\phi_\sigma}\big)\times^B G\cong   \calU_{\sigma,\phi_\sigma} \times^T G\ ,
\end{equation*}
where the right-hand side does not depend on the choice of $B$ anymore. So, for a face $\tau$ of $\sigma$, we may choose $\tau$ and $\sigma$ to lie in the same $T^{\trop}$ by Lemma \ref{lemma_Borel} and then we have a natural open immersion 
\begin{equation*}
\calX_G(\tau,\phi_\tau)\hooklongrightarrow\calX_G(\sigma,\phi_\sigma)
\end{equation*}

\begin{definition}\label{Defintion_ToroidalBordification}
The \emph{toroidal bordification} $\calX_G(\Delta,\Phi)$ of $G$ associated to a stacky fan $(\Delta,\Phi)$ is defined to be the colimit
\begin{equation*}
\calX_G(\Delta,\Phi)=\varinjlim_{\sigma\in \Delta} \calX_G(\sigma,\phi_\sigma) \ .
\end{equation*}
\end{definition}

In other words, we glue the $\calX_G(\sigma,\phi_\sigma)$ along open substacks associated to common faces and consider $\calX_G(\Delta,\Phi)$ to be the union 
\begin{equation*}
\calX_G(\Delta,\Phi)=\bigcup_{\sigma \in \Delta }\calX_G(\sigma,\phi_\sigma) \ .
\end{equation*}

Whenever $G$ is itself a torus $T$ and $\Delta$ is finite, this is nothing but the well-known construction of a toric variety associated to a fan (or, to be precise, a toric root stack associated to a stacky fan). We now prove the basic properties of $\calX_G(\Delta,\Phi)$ stated in Theorem \ref{mainthm_stackyfan=toroidalbordification} from the introduction.

\begin{proposition}\label{prop_toroidalstructure}
    The toroidal bordification $\calX_G(\Delta,\Phi)$ is a separated scheme that is locally of finite type over $k$ making the embedding $G\hookrightarrow \calX_G(\Delta)$ into a toroidal embedding, whose associated cone complex is $\Sigma(\Delta,\Phi)$.
\end{proposition}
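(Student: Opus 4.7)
The plan is to verify the listed properties first on each piece $\calX_G(\sigma,\phi_\sigma)$ and then to show that they descend through the gluing that defines the colimit $\calX_G(\Delta,\Phi) = \bigcup_\sigma \calX_G(\sigma,\phi_\sigma)$. We note that in the stacky situation $\calX_G(\Delta,\Phi)$ must be interpreted as a separated Deligne--Mumford stack, in agreement with Theorem~\ref{mainthm_stackyfan=toroidalbordification}.

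First I would argue that each $\calX_G(\sigma,\phi_\sigma)$ is a separated Deligne--Mumford stack of finite type over $k$. From the identification $\calX_G(\sigma,\phi_\sigma)\cong \calB(\sigma,\phi_\sigma)\times^B G$ preceding Lemma~\ref{lemma_Borel} and the (non-group-theoretic) splitting $\calB(\sigma,\phi_\sigma)\cong U\times \calU_{\sigma,\phi_\sigma}$, this reduces to known properties: $\calU_{\sigma,\phi_\sigma}$ is an affine toric root stack that is separated DM of finite type with finite inertia $A=\ker(\widetilde T\to T)$, and $G\to B\backslash G$ is a Zariski-locally trivial $B$-bundle over the projective base $B\backslash G$. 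Consequently $\calX_G(\sigma,\phi_\sigma)$ is covered by Zariski opens of the form $V\times U\times \calU_{\sigma,\phi_\sigma}$ with $V\subseteq B\backslash G$ open.

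Next, on such a chart $V$ is smooth over $k$ and $U$ is isomorphic as a variety to an affine space, so étale-locally we may find an étale map $V\times U\to \A^d$ with $d=\dim V+\dim U$. Combining this with the $A$-torsor $U_{\widetilde\sigma}\to \calU_{\sigma,\phi_\sigma}$ produces an étale chart of $\calX_G(\sigma,\phi_\sigma)$ mapping étale to the smooth affine toric variety $\A^d\times U_{\widetilde\sigma}$, whose big-torus preimage is precisely $G$. This is the toric chart required by the definition of a toroidal embedding of a DM stack in the sense of \cite[Definition 6.1.1]{ACP}. The open immersions $\calX_G(\tau,\phi_\tau)\hookrightarrow \calX_G(\sigma,\phi_\sigma)$ for faces $\tau\leq \sigma$ are compatible with these charts because they restrict to the standard open immersions $\calU_{\tau,\phi_\tau}\hookrightarrow \calU_{\sigma,\phi_\sigma}$ coming from the toric morphisms $U_{\widetilde\tau}\hookrightarrow U_{\widetilde\sigma}$. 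The stratum-cone correspondence follows from the orbit-cone correspondence for each $\calU_{\sigma,\phi_\sigma}$: the $\widetilde T$-orbits in $U_{\widetilde\sigma}$ are indexed by faces of $\widetilde\sigma$ (equivalently of $\sigma$), they descend to strata of $\calU_{\sigma,\phi_\sigma}$, and are preserved under $\times^T G$. Gluing over $\sigma\in\Delta$ then identifies the cone complex of the resulting toroidal embedding with $\Sigma(\Delta,\Phi)$ by construction.

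The main obstacle is coherence of the gluing when a common face $\tau$ lies inside two cones $\sigma_1\subseteq T_1^{\trop}$ and $\sigma_2\subseteq T_2^{\trop}$ with $T_1\neq T_2$. The second half of Lemma~\ref{lemma_Borel} is tailored to exactly this point: the construction $\calU_{\sigma,\phi_\sigma}\times^T G$ is independent of which torus $T\supseteq \sigma$ we pick, so the two resulting open embeddings of $\calX_G(\tau,\phi_\tau)$ into $\calX_G(\sigma_i,\phi_{\sigma_i})$ agree canonically. Separatedness of the full colimit then follows from the valuative criterion: since the fan axioms guarantee that any two cones meet in a common face, two DVR-extensions of a single generic point factor through one or two adjacent $\calX_G(\sigma,\phi_\sigma)$, and the claim reduces to separatedness of a single piece established in the first step.
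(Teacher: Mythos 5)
Your proof follows essentially the same route as the paper's: decompose via $B = U\rtimes T$, use the Zariski-locally trivial $B$-bundle $G\to B\backslash G$ to reduce to the toric root stack $\calU_{\sigma,\phi_\sigma}$, produce \'etale toric charts, and glue over the fan; the paper treats the $\Phi$-trivial case first and passes to root stacks afterward, whereas you work directly in the stacky setting, which is an equally valid ordering. Two small corrections are worth noting. Your toric chart should land in $\G_m^d\times U_{\widetilde\sigma}$ rather than $\A^d\times U_{\widetilde\sigma}$: unless the \'etale map $V\times U\to\A^d$ is arranged to land inside the open torus $\G_m^d$, the preimage of the big torus of $\A^d\times U_{\widetilde\sigma}$ is a proper open subset of the chart, not all of $G\cap(\text{chart})$, and the chart condition $\gamma^{-1}(T_Z)=\delta^{-1}(G)$ fails (also, $U_{\widetilde\sigma}$ need not be smooth, though that does not affect the argument). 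For separatedness of the colimit, the paper observes that each $\calX_G(\sigma)$ is an affine scheme of finite type and that pairwise intersections are $\calX_G(\sigma\cap\sigma')$, again affine, which gives separatedness directly by the standard gluing criterion. Your appeal to the valuative criterion, as written, glosses over the key step: when the two DVR-extensions factor through distinct pieces $\calX_G(\sigma_1)$ and $\calX_G(\sigma_2)$, one must show both factor through the common-face piece $\calX_G(\sigma_1\cap\sigma_2)$ before ``reducing to separatedness of a single piece.'' This is true (the monomial valuations attached to the generic point land in both $\sigma_1$ and $\sigma_2$), but spelling it out is essentially re-deriving the affine-intersection argument, so the paper's route is the cleaner one to invoke here.
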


\begin{proof}
We consider first the case that $\Phi$ is trivial. Then the union 
\begin{equation*}
\calX_G(\Delta)=\bigcup_{\sigma \in \Delta }\calX_G(\sigma) 
\end{equation*}
is a covering of $\calX_G(\Delta)$ by open affine schemes that are of finite type over $k$. Hence $\calX_G(\Delta)$ is locally of finite type over $k$. 

Given two cones $\sigma$ and $\sigma'$ in $\Delta$, we consider the cone $\tau=\sigma\cap\sigma'$. Then we automatically have 
\begin{equation*}
    \calX_G(\sigma)\cap \calX_G(\sigma')=\calX_G(\tau) \ ,
\end{equation*}
which is again an affine scheme of finite type over $k$. This implies that $\calX_G(\Delta)$ is separated. 

Taking root stacks does not change these properties and, therefore, also $\calX(\Delta, \Phi)$ is locally of finite type and separated. 

Let $B$ be a Borel subgroup of $G$ and $T\subseteq B$ a maximal torus. Write $B=U\rtimes T$ for a unipotent subgroup $U$ of $B$. The both $U$ and $B\backslash G$ are smooth varieties. Let $\sigma$ be a cone in $\Delta_B$. Then, Zariski-locally on $\calX_B(\sigma,\phi_\sigma)$, there is a toric chart $\calX_B(\sigma,\phi_\sigma)\rightarrow \G_m^{\dim U} \times \calX_T(\sigma,\phi_\sigma) $. Since $\calX_G(\sigma,\phi_\sigma)\rightarrow B\backslash G$ is a Zariski-locally trivial principal $\calX_B(\sigma, \phi_\sigma)$-bundle, then, Zariski-locally on $\calX_G$, there is a toric \'etale chart $\calX_G(\sigma,\phi_\sigma)\rightarrow \G_m^{\dim U+\dim G/B} \times  \calX_T(\sigma,\phi_\sigma)$. Since $\calX_G(\sigma,\phi_\sigma)$ is an open substack of $\calX_G(\Delta,\Phi)$, this proves that $G\hookrightarrow \calX_G(\Delta,\Phi)$ is a toroidal embedding. 

The toric charts constructed above are in fact already small and, thus, we also find that the cone complex associated to this toroidal embedding is $\Sigma(\Delta, \Phi)$.
\end{proof}

\begin{corollary}
    There is a natural order-reversing one-to-one correspondence between the toroidal strata of $\calX_G(\Delta)$ and the cones in $\Delta$.
\end{corollary}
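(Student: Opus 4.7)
The plan is to deduce this immediately from Proposition \ref{prop_toroidalstructure} together with the general correspondence between strata and cones for toroidal embeddings recalled in Section \ref{subsection_SkelOfTorEmbed}. Specifically, for any toroidal embedding $X_0 \hookrightarrow X$ without self-intersection, there is a canonical order-reversing bijection between the toroidal strata of $X$ and the cones of its associated cone complex $\Sigma_X$: a stratum $E$ corresponds to a cone $\sigma_E$, and $E'\subseteq \overline{E}$ if and only if $\sigma_E$ is a face of $\sigma_{E'}$.

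Since Proposition \ref{prop_toroidalstructure} identifies the cone complex of $\calX_G(\Delta,\Phi)$ with $\Sigma(\Delta,\Phi)$, and since by construction the cones of $\Sigma(\Delta,\Phi)$ are in bijection with the cones of $\Delta$ (via $\sigma \mapsto \sigma$), composing these two bijections yields the desired correspondence. To check that the order is reversed correctly, I would use the small toric charts constructed at the end of the proof of Proposition \ref{prop_toroidalstructure}: locally on $\calX_G(\sigma,\phi_\sigma)$ the stratification is pulled back \'etale-locally from the $T$-orbit stratification of $\calX_T(\sigma,\phi_\sigma)$ via a chart $\calX_G(\sigma,\phi_\sigma) \to \G_m^{\dim U + \dim G/B} \times \calX_T(\sigma,\phi_\sigma)$, where the order-reversing correspondence between cones and orbits is standard toric geometry (the closed $T$-orbit corresponds to $\sigma$ itself).

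There is essentially no obstacle here beyond unpacking definitions; the only mild subtlety is that $\calX_G(\Delta,\Phi)$ may fail to be quasi-paracompact when $\Delta$ meets infinitely many maximal tori, so one wants to check that the strata/cone correspondence glues consistently across the open cover $\{\calX_G(\sigma,\phi_\sigma)\}_{\sigma \in \Delta}$. This follows from the fact that, for a face $\tau$ of $\sigma$, the open immersion $\calX_G(\tau,\phi_\tau) \hookrightarrow \calX_G(\sigma,\phi_\sigma)$ is compatible with the stratifications on either side, which in turn follows from the compatibility $\phi_\sigma\vert_\tau = \phi_\tau$ built into Definition \ref{def_stackyfan}.
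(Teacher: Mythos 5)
Your proof is correct and follows the same approach as the paper: deduce the corollary from Proposition \ref{prop_toroidalstructure} together with the standard strata--cone correspondence for toroidal embeddings (the paper cites \cite[Section II.1, page 71]{KKMSD_toroidal}, whereas you invoke the version recalled in Section \ref{subsection_SkelOfTorEmbed}, but these are the same fact). The additional remarks you make about order-reversal via small charts and gluing across the cover are correct but not strictly needed, since the cited correspondence already packages the order-reversing bijection globally.
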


\begin{proof}
    This is an immediate consequence of Proposition \ref{prop_toroidalstructure} and the strata-cone correspondence for toroidal embeddings in \cite[Section II.1, page 71]{KKMSD_toroidal}.
\end{proof}

\begin{corollary}
    The Deligne-Mumford stack $\calX_G(\Delta,\Phi)$ is smooth if and only if every cone in $\Delta$ is generated by a basis of the lattice dual to $\widetilde{S}_\sigma$, for every $\sigma\in\Delta$.
\end{corollary}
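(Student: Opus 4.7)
The plan is to reduce this to the classical smoothness criterion for affine toric varieties via the local structure of $\calX_G(\Delta,\Phi)$ already established. Smoothness of $\calX_G(\Delta,\Phi)$ is a local property, and by construction the stack is covered by the open substacks $\calX_G(\sigma,\phi_\sigma)$ for $\sigma \in \Delta$, so it suffices to check smoothness of each of these pieces.

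Next, I would invoke the toric \'etale charts constructed in the proof of Proposition \ref{prop_toroidalstructure}: Zariski-locally on $\calX_G(\sigma,\phi_\sigma)$ there is an \'etale morphism to $\G_m^d \times \calU_{\sigma,\phi_\sigma}$ where $d = \dim U + \dim G/B$. Since $\G_m^d$ is smooth and \'etale morphisms preserve and reflect smoothness, smoothness of $\calX_G(\sigma,\phi_\sigma)$ is equivalent to smoothness of the affine toric root stack $\calU_{\sigma,\phi_\sigma}$. This reduces the entire problem to an assertion about affine toric root stacks, one cone at a time.

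For the final reduction, recall the presentation $\calU_{\sigma,\phi_\sigma} = \big[U_{\widetilde{\sigma}}/A\big]$, where $A = \ker(\widetilde{T} \to T)$ is a finite abelian group. Since $\characteristic k = 0$, the group $A$ is \'etale, and the quotient map $U_{\widetilde{\sigma}} \to \calU_{\sigma,\phi_\sigma}$ is an \'etale surjection of Deligne--Mumford stacks. Thus $\calU_{\sigma,\phi_\sigma}$ is smooth if and only if $U_{\widetilde{\sigma}}$ is smooth as a scheme. The classical smoothness criterion for affine toric varieties (see e.g.\ \cite[Proposition 1.3.12]{CoxLittleSchenk_toric}) says that $U_{\widetilde{\sigma}}$ is smooth if and only if the cone $\widetilde{\sigma} = \Hom(\widetilde{S}_\sigma, \R_{\geq 0})$ is generated by (part of) a basis of the dual lattice $\widetilde{N} = \Hom(\widetilde{S}_\sigma^{\gp},\Z)$. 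Unwinding, this is exactly the condition in the statement.

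The potential subtle step is the passage between smoothness of $\calU_{\sigma,\phi_\sigma}$ as a Deligne--Mumford stack and smoothness of the scheme $U_{\widetilde{\sigma}}$; this requires one to observe that the finite group $A$ acts through \emph{\'etale} automorphisms in characteristic zero, so no new singularities are introduced or lost. Everything else is a direct concatenation of already-established facts, so I would not expect a genuine obstacle beyond bookkeeping.
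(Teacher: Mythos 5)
Your proof is correct and follows the same structural reduction as the paper: smoothness is étale-local, the toroidal bordification is covered by the $\calX_G(\sigma,\phi_\sigma)$, and the toric étale charts from Proposition~\ref{prop_toroidalstructure} reduce the question to the affine toric root stacks $\calU_{\sigma,\phi_\sigma}$. The one genuine difference is in the final step: where the paper appeals to the general characterization of smooth toric stacks via stacky fans (\cite[Theorem 3.10.7]{GillamMolcho}), you prove that criterion directly from the presentation $\calU_{\sigma,\phi_\sigma}=\big[U_{\widetilde{\sigma}}/A\big]$, noting that in characteristic zero the finite group scheme $A=\ker(\widetilde{T}\to T)$ is \'etale, so the atlas $U_{\widetilde{\sigma}}\to\calU_{\sigma,\phi_\sigma}$ is \'etale surjective and the stack is smooth precisely when the classical toric criterion holds for $\widetilde{\sigma}$. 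This is a clean and more self-contained argument, and correctly isolates the characteristic-zero hypothesis. One small caution: the phrase ``\'etale morphisms preserve and reflect smoothness'' only gives equivalence at points in the image of the chart, so for the reverse implication (smoothness of $\calX_G(\sigma,\phi_\sigma)$ forces the cone condition) you should invoke that the charts do reach the closed stratum of $\calU_{\sigma,\phi_\sigma}$ by the strata–cone correspondence, or alternatively use the Zariski-locally trivial fiber-bundle structure $\calX_G(\sigma,\phi_\sigma)\cong\calU_{\sigma,\phi_\sigma}\times^T G$ over the smooth base $B\backslash G$; either remark closes the gap.
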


\begin{proof}
    This is an immediate consequence of Proposition \ref{prop_toroidalstructure}, the characterization of smooth toric stacks via their stacky fans (see e.g. \cite[Theorem 3.10.7]{GillamMolcho}), and the fact that smoothness of algebraic varieties may be checked \'etale locally. 
\end{proof}

\begin{proposition}
    For a one-parameter subgroup $u\colon \GG_m\hookrightarrow G$ we have
\begin{equation*}
\lim_{t\rightarrow 0}u(t)\in \calX_G(\Delta,\Phi)
\end{equation*}
if and only if the $u\in \Delta$.
\end{proposition}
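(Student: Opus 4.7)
The plan is to prove both implications by reducing to the classical toric fact that a cocharacter of the big torus $T$ of a (stacky) toric variety $\calU_{\sigma,\phi_\sigma}$ admits a limit at zero in $\calU_{\sigma,\phi_\sigma}$ if and only if that cocharacter lies in $\sigma$. The bridge between the toric and the $G$-equivariant pictures will be the local presentation $\calX_G(\sigma,\phi_\sigma) \cong \calU_{\sigma,\phi_\sigma}\times^T G$ from Lemma \ref{lemma_Borel}. I will adopt the convention that $T$ acts on $\calU_{\sigma,\phi_\sigma}\times G$ via $s\cdot (x,h) = (sx, hs^{-1})$, so that the residual left $G$-action on $\calX_G(\sigma,\phi_\sigma)$ extends left multiplication on $G$.

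For the sufficiency direction I would begin from the hypothesis $u\in\sigma$ for some $\sigma\in\Delta$. By Definition \ref{Defintion_ConeInGBuilding} there is a maximal torus $T$ of $G$ with $\sigma\subseteq T^{\trop}$; after moving $u$ within its class in $G^{\build}$ I may assume that the image of $u$ lies in $T$, so that $u$ is a cocharacter of $T$ with $u\in\sigma$. In the chart $\calX_G(\sigma,\phi_\sigma)$ the quotient relation lets me rewrite the inclusion $u(t)\in G$ as $[1,u(t)] = [u(t),e]$; the standard (stacky) toric theory then produces a limit $x_0 := \lim_{t\to 0} u(t)\in \calU_{\sigma,\phi_\sigma}$, so that $\lim_{t\to 0} u(t) = [x_0,e]$ exists in $\calX_G(\sigma,\phi_\sigma)\subseteq \calX_G(\Delta,\Phi)$.

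For the necessity direction I would suppose $p := \lim_{t\to 0} u(t)$ exists in $\calX_G(\Delta,\Phi)$ and pick a chart $\calX_G(\sigma,\phi_\sigma)$ containing it. Using that every $T$-torsor on $\A^1$ is trivial, the extension $\A^1 \to \calX_G(\sigma,\phi_\sigma)$ of $u$ will lift to a pair $(\gamma(t),h(t))\colon \A^1 \to \calU_{\sigma,\phi_\sigma}\times G$ with $[\gamma(t),h(t)] = [1,u(t)]$ on $\GG_m$. Writing $\gamma(t)$ as a cocharacter of $T$ times a regular factor in $T(k[\![t]\!])$, the extendability of $\gamma$ to $\calU_{\sigma,\phi_\sigma}$ forces its cocharacter to lie in $\sigma$, while the quotient compatibility becomes an Iwasawa-type decomposition $u(t) = \gamma(t)\cdot h(t)$ in $G(k(\!(t)\!))$ with $h(t)\in G(k[\![t]\!])$. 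Tropicalizing via $\trop_{\build}$ and invoking the $G(k)$-invariance of the image of Berkovich's embedding $\Theta\colon G^{\build}\hookrightarrow G^{\an}$ (which absorbs the $G(k[\![t]\!])$-correction $h(t)$), I would then conclude that $u$ and $\gamma$ define the same element of $G^{\build}$, so that $u \in \sigma\subseteq |\Delta|$.

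The hardest step will be precisely this last identification of $u$ and $\gamma$ as the same element of $G^{\build}$ despite the presence of the $G(k[\![t]\!])$-correction $h(t)$. This ultimately rests on the $\ast$-multiplication properties established in Proposition \ref{prop_PropertiesOfStarMultBySBP} together with the $G(k)$-invariance of the Shilov boundary point $\bfg$ of $G^{\beth}$ that enters the definition of $\Theta$; everything else in the plan is a direct transcription of the toric case through the balanced product.
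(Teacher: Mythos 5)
Your overall route --- localize to a chart $\calX_G(\sigma,\phi_\sigma)\cong\calU_{\sigma,\phi_\sigma}\times^{T'}G$, lift along the $T'$-torsor, and invoke the classical toric criterion for one-parameter limits --- is exactly the paper's route, and your sufficiency direction is fine. You are also right that the real content sits in the necessity direction, which the paper dispatches with a terse ``by construction.'' Unfortunately the necessity argument as sketched does not close, and the reason is a side error. With the convention you declare, $s\cdot(x,h)=(sx,hs^{-1})$, the $T'$-invariant map to $G$ on the open locus is $(x,h)\mapsto hx$, and a lift $(\gamma,h)$ of $[1,u(t)]$ satisfies $\gamma(t)=s(t)$, $h(t)=u(t)s(t)^{-1}$, i.e.\ $u(t)=h(t)\gamma(t)$ with the bounded factor $h(t)\in G(k[\![t]\!])$ sitting on the \emph{left}, not on the right as in the $u(t)=\gamma(t)h(t)$ you wrote. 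This matters because Berkovich's section $\Theta(g,\lambda)=g\bft_\lambda\ast\bfg$ and the retraction $\bfq(p)=p\ast\bfg$ absorb $G^\beth$-factors on the \emph{right}: by Proposition \ref{prop_StarMultGivesQuotient} applied to right translation, $p\ast\bfg=q\ast\bfg$ precisely when $pG^\beth=qG^\beth$. A bounded left factor is not absorbed by the right $\ast$-retraction, and what you actually need is this $G^\beth$-absorption, not ``$G(k)$-invariance of $\Theta$'' --- $h(t)$ lies in $G(k[\![t]\!])\subseteq G^\beth$ but not in $G(k)$.

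That this is not cosmetic can be seen already in $\SL_2$. Let $T$ be the diagonal torus, $u(t)=\mathrm{diag}(t,t^{-1})$, let $\nu=\left(\begin{smallmatrix}1&1\\0&1\end{smallmatrix}\right)$, and set $T'=\nu T\nu^{-1}$, $\gamma_0=\nu u\nu^{-1}$. A direct computation gives $u(t)\gamma_0(t)^{-1}=\left(\begin{smallmatrix}1 & t^2-1 \\ 0 & 1\end{smallmatrix}\right)\in\SL_2(k[t])$, so $u(t)=h(t)\gamma_0(t)$ with bounded $h$ on the left; yet $u\in T^{\trop}$ and $\gamma_0\in(T')^{\trop}$ are \emph{distinct} points of $G^{\build}$, since $T^{\trop}\cap(T')^{\trop}=\{0\}$ there. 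So a decomposition of the shape you obtain cannot certify $u=\trop(\gamma)$, and worse, under your stated convention $u(t)$ would acquire a limit in $\calX_G(\rho)$ for $\rho=\R_{\geq 0}\gamma_0\subseteq(T')^{\trop}$ even though $u\notin\rho$. The repair is to use the opposite balanced-product convention, so the torsor relation yields $u(t)=\gamma(t)h(t)$ with $h$ on the right; then $u(t)G^\beth=\gamma(t)G^\beth$, hence $u(t)\ast\bfg=\gamma(t)\ast\bfg$, and combining $\bft_0\ast\bfg=\bfg$ from Proposition \ref{prop_PropertiesOfStarMultBySBP} with associativity and commutativity of $T$-multiplication gives $\Theta(u)=\Theta(\trop\gamma)$, whence $u=\trop(\gamma)\in\sigma$ by injectivity of $\Theta$. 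With that fix your argument goes through and is considerably more explicit than the paper's own.
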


\begin{proof}
    There is a maximal torus $T\leq G$ such that $u$ lands in $T$. This means that $u$ is also a one-parameter subgroup of $T$. Let $T$ be an arbitrary such torus $T$. By construction, the one-parameter subgroup $u$ has a limit in $\calX_G(\Delta,\Phi)$ if and only it has a limit in the toric root stack $\calX_G(\Delta_T,\Phi_T)$. This is the case if and only if $u$ has a limit in the coarse moduli space $\calX_G(\Delta_T)$. But $u$ has a limit in $\calX_G(\Delta_T)$ if and only if $u\in \Delta_T$ by \cite[Chapter 1]{CoxLittleSchenk_toric}.
\end{proof}

\begin{proposition}
    The multiplication $G\times G\rightarrow G$ extends to a natural right operation of $G$ on $\calX_G(\Delta,\Phi)$
\end{proposition}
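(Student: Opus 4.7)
The plan is to build the right action cone-by-cone using the presentation $\calX_G(\sigma,\phi_\sigma)=\calU_{\sigma,\phi_\sigma}\times^T G$ and then glue. First I would fix a cone $\sigma\in\Delta$, choose a maximal torus $T$ with $\sigma\subseteq T^{\trop}$, and consider the product $\calU_{\sigma,\phi_\sigma}\times G$ equipped with the left $T$-action $t\cdot(u,g)=(tu,tg)$ used to form the balanced product. Right multiplication by $G$ on the second factor, $(u,g)\cdot h=(u,gh)$, obviously commutes with this $T$-action, so it descends to a right $G$-action on the quotient stack $\calX_G(\sigma,\phi_\sigma)$. By Lemma \ref{lemma_Borel} the resulting stack and action are independent of the choice of $T$.

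Next I would verify compatibility with the open gluings that define $\calX_G(\Delta,\Phi)$ as a colimit. Given a face $\tau\preceq\sigma$, Lemma \ref{lemma_Borel} lets me choose the same torus $T$ for both, and the open immersion
\begin{equation*}
    \calX_G(\tau,\phi_\tau)=\calU_{\tau,\phi_\tau}\times^T G\hooklongrightarrow \calU_{\sigma,\phi_\sigma}\times^T G=\calX_G(\sigma,\phi_\sigma)
\end{equation*}
is induced by the $T$-equivariant open immersion $\calU_{\tau,\phi_\tau}\hookrightarrow\calU_{\sigma,\phi_\sigma}$, which does not touch the $G$-factor. Hence the open immersion is right $G$-equivariant. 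For two cones $\sigma,\sigma'\in\Delta$ meeting in $\sigma\cap\sigma'\in\Delta$, the two induced open immersions from $\calX_G(\sigma\cap\sigma',\phi_{\sigma\cap\sigma'})$ are both right $G$-equivariant, so the cocycle condition for gluing the actions holds formally. By the universal property of the colimit in Definition \ref{Defintion_ToroidalBordification}, the individual right actions assemble into a single morphism $\calX_G(\Delta,\Phi)\times G\rightarrow\calX_G(\Delta,\Phi)$, and the associativity and unit axioms can be checked on the open cover $\{\calX_G(\sigma,\phi_\sigma)\}_{\sigma\in\Delta}$, where they hold by construction.

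Finally I would identify this action on the open dense substack $G\subseteq\calX_G(\Delta,\Phi)$ with ordinary right multiplication. The open embedding $G\hookrightarrow\calX_G(\sigma,\phi_\sigma)$ corresponds to restricting $\calU_{\sigma,\phi_\sigma}$ to its dense open torus $T$, and the map $T\times^T G\xrightarrow{\sim}G$, $(t,g)\mapsto tg$, is manifestly equivariant for right multiplication by $G$ on both sides. This shows that the constructed action extends the multiplication $G\times G\to G$, completing the proof.

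There is no real obstacle here beyond bookkeeping: everything is a formal consequence of the fact that the quotient defining $\calX_G(\sigma,\phi_\sigma)$ is taken with respect to a \emph{left} $T$-action and that right multiplication commutes with left multiplication. The only care needed is that the choices of torus $T$ and Borel $B$ were used merely to \emph{construct} the pieces, while the resulting action is intrinsic; the pushouts in Lemma \ref{lemma_Borel} make this independence transparent.
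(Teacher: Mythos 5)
Your proof is correct and takes essentially the same route as the paper: both build the right action on each piece $\calX_G(\sigma,\phi_\sigma)=\calU_{\sigma,\phi_\sigma}\times^T G$ by observing that right multiplication on the $G$-factor descends through the balanced product, and then glue along faces using the compatibility of the open immersions with the action (the paper records this as a commuting square, you spell out why it commutes via Lemma~\ref{lemma_Borel}). Your write-up just makes explicit the descent and the reduction to a common torus that the paper leaves as "by construction."
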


\begin{proof}
By construction there is a natural right operation of $G$ on $\calX_G(\sigma,\phi_\sigma)$ for all $\sigma\in\Delta$. Given a face $\tau$ of $\sigma$, the natural diagram
\begin{equation*}
    \begin{tikzcd}
         \calX_G(\tau,\phi_\tau)\times G \arrow[rr]\arrow[d,"\id\times\subseteq"] && \calX_G(\tau,\phi_\tau)\arrow[d,"\subseteq"]\\  
        \calX_G(\sigma,\phi_\sigma) \arrow[rr] \times G && \calX_G(\sigma,\phi_\sigma)
    \end{tikzcd}
\end{equation*}
commutes. Hence there is a natural $G$ operation from the left on all of $\calX(\Delta,\Phi)$.
\end{proof}

\begin{proposition}
    \label{prop_PermutationOfPiecesOfCover}
    Let $(\Delta,\Phi)$ be a stacky fan such that the action of $G(k)$ on $G^{\build}$ lifts to an action of $G(k)$ on $(\Delta,\Phi)$. Let $g\in G(k)$, $(\sigma,\phi_{\sigma})\in (\Delta, \Phi)$, and let $g\cdot (\sigma,\phi_{\sigma})= (\sigma, \phi_{\sigma'})$. Then the automorphism of $G$ given by conjugation by $g$ extends to an isomorphism
    \[
L_g: \calX_G(\sigma,\phi_{\sigma}) \rightarrow \calX_G (\sigma, \phi_{\sigma'}).
    \]
    Given a face $\tau$ of $\sigma$, we write $\tau'$ for the corresponding face of $\sigma'$. The isomorphism $L_{g}^\sigma$ restricts the isomorphism 
\begin{equation*}
L_{g}^\tau \colon\calX_G(\tau,\phi_\tau)\xlongrightarrow{\sim} \colon\calX_G(\tau',\phi_{\tau'}) \ .
\end{equation*}
\end{proposition}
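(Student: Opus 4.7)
The plan is to build $L_g^\sigma$ from the identification $\calX_G(\sigma,\phi_\sigma)\cong \calU_{\sigma,\phi_\sigma}\times^T G$ of Lemma~\ref{lemma_Borel}, where $T\leq G$ is any maximal torus with $\sigma\subseteq T^{\trop}$. Conjugation by $g$ restricts to a group isomorphism $c_g\colon T\xrightarrow{\sim} T':=gTg^{-1}$; on apartments this is exactly the map $T^{\trop}\to (T')^{\trop}$ realizing the $G(k)$-action on $G^{\build}$. By hypothesis $c_g$ sends $\sigma$ to $\sigma'$, and interpreting the lifted action on $(\Delta,\Phi)$ as including compatibility with the Kummer data, it sends $\phi_\sigma$ to $\phi_{\sigma'}$. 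By functoriality of the Kummer root-stack construction, this yields an isomorphism $c_g\colon \calU_{\sigma,\phi_\sigma}\xrightarrow{\sim}\calU_{\sigma',\phi_{\sigma'}}$, equivariant with respect to $c_g\vert_T$.

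I would then define $L_g^\sigma$ as the map induced on quotients by $(u,h)\mapsto (c_g(u),c_g(h))$. This descends because for $t\in T$ one has $c_g(u\cdot t)=c_g(u)\cdot c_g(t)$ and $c_g(t^{-1}h)=c_g(t)^{-1}c_g(h)$ with $c_g(t)\in T'$; applying the same recipe to $g^{-1}$ supplies a two-sided inverse, so $L_g^\sigma$ is an isomorphism. A brief verification on the open dense locus $G\cong T\times^T G\subseteq \calX_G(\sigma,\phi_\sigma)$, via $h\mapsto [(e,h)]$, confirms that $L_g^\sigma$ genuinely restricts to conjugation by $g$ on $G$.

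For the face compatibility, a face $\tau$ of $\sigma$ yields an open immersion $\calU_{\tau,\phi_\tau}\hookrightarrow\calU_{\sigma,\phi_\sigma}$ (Lemma~\ref{lemma_Kummerface}), and analogously for $\tau'\subseteq\sigma'$. Functoriality of the root-stack construction, applied to the commutative square of Kummer homomorphisms relating $\phi_\tau,\phi_\sigma,\phi_{\tau'},\phi_{\sigma'}$, produces a commutative square of $c_g$-isomorphisms between the four affine toric root stacks. Since the mixed product $(-)\times^T G$ preserves open immersions, passing to the bordifications gives $L_g^\sigma\vert_{\calX_G(\tau,\phi_\tau)}=L_g^\tau$ directly.

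The main obstacle I anticipate is purely bookkeeping rather than geometric: the phrase ``the action of $G(k)$ on $G^{\build}$ lifts to an action on $(\Delta,\Phi)$'' must be read strongly enough to encompass the Kummer data (i.e.\ $g\cdot\phi_\sigma=\phi_{\sigma'}$ as morphisms of toric monoids), and not merely as a compatible permutation of the underlying cones. If this interpretation is only a permutation, one obtains an isomorphism of coarse moduli spaces but not of root stacks. Once the convention is pinned down, everything else reduces to functoriality of the constructions appearing in Section~\ref{section_stackyfans} and the definition of $\calX_G(\sigma,\phi_\sigma)$.
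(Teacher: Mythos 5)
Your proof is correct and the underlying content is essentially the paper's: both proofs ultimately rest on the fact that conjugation by $g$ is compatible with the twisted product $\calU_{\sigma,\phi_\sigma}\times^T G$ and with the Kummer root stacks, and both handle faces via the open immersions of Lemma~\ref{lemma_Kummerface}. The one presentational difference is that the paper routes the argument through the Borel: it first observes that the lifting hypothesis forces each cone $\sigma$ to lie inside a Weyl chamber $C_B$, then builds the isomorphism as a composite $\calX_B(\sigma,\phi_\sigma)\to\calX_{B'}(\sigma',\phi_{\sigma'})$ together with $B\backslash G\to B'\backslash G$ (using the factorization $\calX_G=\calB\times^B G$), whereas you work directly with the $B$-independent description $\calX_G(\sigma,\phi_\sigma)\cong\calU_{\sigma,\phi_\sigma}\times^T G$. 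Your route is slightly more economical, since it needs neither the Weyl-chamber observation nor a choice of Borel. Your cautionary remark at the end is well taken and matches the paper's reading: the hypothesis that the $G(k)$-action lifts to $(\Delta,\Phi)$ is indeed interpreted to include the Kummer data $\phi_\sigma$, not just the underlying cones, exactly as your argument requires.
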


Proposition \ref{prop_PermutationOfPiecesOfCover} immediately implies that $L_{g}$ extends to an automorphism of $\calX(\Delta, \Phi)$, which proves Property (vi) in Theorem \ref{mainthm_stackyfan=toroidalbordification}.

\begin{proof}[Proof of Proposition \ref{prop_PermutationOfPiecesOfCover}]
    Since the action of $G(k)$ on $G^{\build}$ lifts to action of $G(k)$ on $(\Delta,\Phi)$, the cone $\sigma$ has to be a subset of the Weyl chamber associated to a Borel subgroup $B$ with maximal torus $T$. Findi $B'$ and $T'$ analogously for $\sigma'$. Since $g$ maps $\sigma$ to $\sigma'$, we automatically have that $B'=g^{-1}Bg$ and $g^{-1}Tg$. This implies that the conjugation extends to an isomorphism $\calX_B(\sigma,\phi_\sigma)\xrightarrow{\sim}\calX_{B'}(\sigma',\phi_{\sigma'})$. Furthermore, the conjugation by $g$ induces an isomorphism $B\backslash G\xrightarrow{\sim}B'\backslash G$. Hence it induces an isomorphism  
    \begin{equation*}
        \calX_G(\sigma,\phi_{\sigma}) \xlongrightarrow{\sim} \calX_G (\sigma', \phi_{\sigma'})
    \end{equation*}
    as claimed. The second claim is an immediate consequence of the above construction.
\end{proof}

Given a homomorphism of algebraic groups $\xi \colon G_1\rightarrow G_2$ there is an induced map $\xi^{\build}\colon G_1^{\build}\rightarrow G_2^{\build}$ that maps apartments to apartments. Given stacky fans, $(\Delta_i,\Phi_i)$ in $G_i^{\build}$, we say that $\xi^{\build}$ maps $(\Delta_1,\Phi_1)$ to $(\Delta_2,\Phi_2)$ if for each cone $\sigma_1\in \Delta_1$:
\begin{itemize}
    \item there is some cone $\sigma_2$ in $\Delta_2$ such that $\xi^{\build}(\sigma_1)\subseteq \sigma_2$ and
    \item  there is semigroup homomorphism $\tilde{\xi}\colon\tilde{S}_{\sigma_2} \rightarrow \tilde{S}_{\sigma_1}$ such that the following diagram commutes
    \begin{equation*}\begin{tikzcd}
\tilde{S}_{\sigma_2} \arrow[r, "\tilde{\xi}^{\build}"] & \tilde{S}_{\sigma_1}  \\
{S}_{\sigma_2} \arrow[u,"\phi_{\sigma_2}"]  \arrow[r, "\xi^{\build}"] & {S}_{\sigma_1} \arrow[u,"\phi_{\sigma_1}"].  
\end{tikzcd}\ \end{equation*}
where we use $\xi$ in an abuse of notation to denote the morphism $S_{\sigma_2}\rightarrow S_{\sigma_1}$ induced by the map of $\xi^{\build}\colon \sigma_2 \rightarrow \sigma_1$.
\end{itemize}

\begin{lemma}\label{lemma_uniquenessofextensionofkummer}
    With the setup above, if the morphism $\tilde{\xi}^{\build}$ exists, then it is unique.
\end{lemma}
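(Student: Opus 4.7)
The plan is to leverage the defining property of a Kummer homomorphism: every $\tilde{m}\in \tilde{S}_{\sigma_2}$ admits a positive multiple $n\cdot\tilde{m} = \phi_{\sigma_2}(m)$ that lies in the image of $\phi_{\sigma_2}$. If $\tilde{\xi}^{\build}$ is any semigroup homomorphism making the square in the statement commute, then the value $\tilde{\xi}^{\build}(\tilde{m})$ is forced by
\begin{equation*}
n\cdot \tilde{\xi}^{\build}(\tilde{m}) \;=\; \tilde{\xi}^{\build}(n\cdot\tilde{m}) \;=\; \tilde{\xi}^{\build}\bigl(\phi_{\sigma_2}(m)\bigr) \;=\; \phi_{\sigma_1}\bigl(\xi^{\build}(m)\bigr)\ ,
\end{equation*}
so $\tilde{\xi}^{\build}(\tilde{m})$ must be a preimage of $\phi_{\sigma_1}(\xi^{\build}(m))$ under multiplication by $n$ on $\tilde{S}_{\sigma_1}$.

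To close the argument I would invoke that, by the definition of a Kummer homomorphism, the target $\tilde{S}_{\sigma_1}$ is a toric monoid, hence integral, saturated, and torsion-free. Its Grothendieck group $\tilde{S}_{\sigma_1}^{\gp}$ is thus a finitely generated free abelian group, on which multiplication by any positive integer $n$ is injective; the same property descends to $\tilde{S}_{\sigma_1}\hookrightarrow\tilde{S}_{\sigma_1}^{\gp}$. Consequently the displayed equation admits at most one solution in $\tilde{S}_{\sigma_1}$, and any two candidate extensions $\tilde{\xi}^{\build}$ and $(\tilde{\xi}^{\build})'$ must coincide at $\tilde{m}$. Since $\tilde{m}$ was arbitrary, uniqueness follows.

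There is no serious obstacle here: existence of a solution to the equation is built into the hypothesis that an extension exists, and the rest is torsion-free linear algebra inside $\tilde{S}_{\sigma_1}^{\gp}$. The only point worth flagging is that one should not need to worry about independence of the chosen pair $(n,m)$, because uniqueness of $\tilde{\xi}^{\build}(\tilde{m})$ is deduced for any single such pair.
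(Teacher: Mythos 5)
Your argument is correct and follows the same route as the paper's proof: pick an arbitrary $\tilde{m}$, use the Kummer property to write $n\cdot\tilde m=\phi_{\sigma_2}(m)$, apply commutativity to pin down $n\cdot\tilde{\xi}^{\build}(\tilde m)$, and conclude from torsion-freeness of the toric monoid. You merely make explicit the final step, which the paper compresses into the phrase that $\tilde S_{\sigma_1}$ being toric forces uniqueness.
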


\begin{proof}
    Let $\tilde{s}\in \tilde{S}_{\sigma_2},$ then because $\phi_{\sigma_2}$ is a Kummer homomorphism these is some $n\in \N$ and $s\in S_{\sigma_2}$ such that $n\cdot \tilde{s}=\phi_{\sigma_1}(s)$. So then by the commutative diagram we have that $n\cdot \tilde{\xi}^{\build}(\tilde{s})=\phi_{\sigma_1}(\xi(s))$. Because $\tilde{S}_{\sigma_1}$ is a toric monoid this means that $\tilde{\xi}^{\build}(\tilde{s})$ is uniquely determined.
\end{proof}

\begin{proposition}\label{Prop_FunctorialityOf}
    Let $\xi \colon G_1\rightarrow G_2$ be a group homomorphism such that $\xi^{\build}$ maps $(\Delta_1,\Phi_1)$ to $(\Delta_2,\Phi_2)$. Then $\xi$ extends to an equivariant morphism $\calX(\Delta_1,\Phi_1)\rightarrow \calX(\Delta_2,\Phi_2)$.
\end{proposition}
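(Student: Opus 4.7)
The plan is to build the morphism $\tilde\xi\colon \calX(\Delta_1,\Phi_1)\to \calX(\Delta_2,\Phi_2)$ piece by piece using the colimit description in Definition \ref{Defintion_ToroidalBordification}, and then verify that the local pieces glue. Given a cone $\sigma_1\in\Delta_1$, I would first pin down a canonical target: using the fan axiom that pairwise intersections are common faces, there is a unique \emph{minimal} cone $\sigma_2\in\Delta_2$ containing $\xi^{\build}(\sigma_1)$, and by Lemma \ref{lemma_uniquenessofextensionofkummer} the accompanying Kummer lift $\tilde\xi\colon \tilde S_{\sigma_2}\to\tilde S_{\sigma_1}$ is unique. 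Pick tori $T_i\leq G_i$ with $\sigma_i\subseteq T_i^{\trop}$ and $\xi(T_1)\subseteq T_2$.

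Next I would promote this combinatorial datum to morphisms of stacks. The commutative Kummer square is exactly the input needed to induce a toric morphism $U_{\tilde\sigma_1}\to U_{\tilde\sigma_2}$ (equivariant along $\tilde T_1\to\tilde T_2$) together with a compatible morphism of Artin stacks $\calA_{\sigma_1}\to\calA_{\sigma_2}$; the $2$-fibered products of Definition \ref{def_stackytoroidalembeddings} then yield a morphism $\calU_{\sigma_1,\phi_{\sigma_1}}\to\calU_{\sigma_2,\phi_{\sigma_2}}$ equivariant along $T_1\to T_2$. Combining this with $\xi$ itself on $G_1\to G_2$ gives, via the associated bundle construction,
\[
f_{\sigma_1}\colon \calX_{G_1}(\sigma_1,\phi_{\sigma_1})=\calU_{\sigma_1,\phi_{\sigma_1}}\times^{T_1}G_1 \longrightarrow \calU_{\sigma_2,\phi_{\sigma_2}}\times^{T_2}G_2=\calX_{G_2}(\sigma_2,\phi_{\sigma_2}),
\]
which I then compose with the open immersion $\calX_{G_2}(\sigma_2,\phi_{\sigma_2})\hookrightarrow \calX(\Delta_2,\Phi_2)$.

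The hard part is checking that the $f_{\sigma_1}$ assemble into a single morphism on the colimit. Given $\sigma_1,\sigma_1'\in\Delta_1$ with common face $\tau_1=\sigma_1\cap\sigma_1'$, write $\sigma_2,\sigma_2',\tau_2$ for the corresponding minimal chosen cones. Since $\xi^{\build}(\tau_1)\subseteq\sigma_2\cap\sigma_2'$ and the intersection is a common face of both, the minimality of $\tau_2$ forces $\tau_2\subseteq\sigma_2\cap\sigma_2'$, so $\tau_2$ is a face of both $\sigma_2$ and $\sigma_2'$. The two induced Kummer morphisms $\tilde S_{\tau_2}\to\tilde S_{\tau_1}$ obtained by restriction from $\tilde\xi$ on $\sigma_1$ and on $\sigma_1'$ each fit in the required commutative square over $\xi^{\build}\colon S_{\tau_2}\to S_{\tau_1}$, hence must agree by Lemma \ref{lemma_uniquenessofextensionofkummer} (combined with Lemma \ref{lemma_Kummerface} identifying the restricted Kummer homomorphism with $\phi_{\tau_2}$). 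This forces $f_{\sigma_1}$ and $f_{\sigma_1'}$ to coincide on the open substack $\calX_{G_1}(\tau_1,\phi_{\tau_1})$, and the local maps glue into a global morphism $\tilde\xi$.

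Finally, $\xi$-equivariance is built into the construction: the $G_i$-action on each piece $\calU_{\sigma_i,\phi_{\sigma_i}}\times^{T_i}G_i$ is by translation on the right factor, and $f_{\sigma_1}$ acts as $\xi$ on that factor. I expect the central technical obstacle to be precisely the face-compatibility step above—confirming that the minimal-cone choice together with the uniqueness in Lemma \ref{lemma_uniquenessofextensionofkummer} force coherence on all overlaps—but no genuinely new input should be needed beyond that already used in assembling $\calX(\Delta,\Phi)$ itself as a colimit.
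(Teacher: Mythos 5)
Your proof is correct and follows essentially the same route as the paper's: define the morphism cone-by-cone using the Kummer lift and Lemma~\ref{lemma_uniquenessofextensionofkummer}, promote to the associated-bundle pieces $\calX_{G_i}(\sigma_i,\phi_{\sigma_i})$, and glue. The paper's proof is terser and simply asserts that the local maps glue; you supply the missing face-compatibility verification, and the device of fixing the \emph{minimal} cone $\sigma_2\supseteq\xi^{\build}(\sigma_1)$ (using that intersections of cones in $\Delta_2$ are again cones in $\Delta_2$) cleanly pins down a canonical target, which the paper leaves implicit. These are useful elaborations but do not change the argument's structure or the key input (the uniqueness of the Kummer lift).
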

\begin{proof}
    Given a maximal torus $T_1\subseteq G_1$ the morphism $\xi$ maps $T_1$ to another maximal torus $T_2$ in $G_2$.
    Let $\sigma_1\subseteq T_1^{\trop}$ be a cone in $\Delta_1$, and let $\sigma_2\supseteq \xi^{\build}(\sigma_1)$. By assumption $\xi$ induces a morphism $U_{\sigma_1}\rightarrow U_{\sigma_2}$, which then extends to a morphism $\calU_{(\sigma_1,\phi_{\sigma_1})}\rightarrow \calU_{(\sigma_2,\phi_{\sigma_2})}$, and this extension is unique by Lemma \ref{lemma_uniquenessofextensionofkummer}.
    But then this extends to a a right $G$-equivariant morphsim $\calX_{G_1}(\sigma_1,\phi_1)\rightarrow \calX_{G_2}(\sigma_2,\phi_2)$. So for each cone $\sigma_1 \in \Delta$ we have a morphism $\calX_{G_1}(\sigma_1,\phi_{\sigma_1})\rightarrow \calX_{G_2}(\Delta_2,\Phi_2)$. and these maps glue.
\end{proof}

\section{Toroidal skeletons and tropicalization}\label{Sec_SkeletonAndBuilding}

In Section \ref{section_toroidalbord} we introduced a toroidal bordification of $G$ associated to a fan $\Delta$ in $G^{\build}$, which we denoted by $\calX_G(\Delta)$. In this section we study the connections between the non-Archimedean analytification of $\calX_G(\Delta)^{\beth}$ and $G^{\build}$. In particular we will prove Theorem \ref{mainthm_skeletonbuilding}.

At this point we recall once more that the scheme $\calX_G(\Delta)$ is not quasi-paracompact and so $\calX_G(\Delta)^{\beth}$ does not have the strucure of a Berkovich analytic space. The issue with giving $\calX_G(\Delta)^{\beth}$ the structure of a Berkovich space is that in general Berkovich spaces can only be glued along closed subsets if the covering by closed subsets is locally finite, which, in general, is not true for $\calX_G(\Delta)$. Thus we invoke the workaround described in Remark \ref{remark_bethvsquasiparacompact} above. For each cone $\sigma\in \Delta$ the space $\calX_G(\sigma)^{\beth}$ has tropicalization $\Sigmabar_{\sigma}$ and there is a tropicalization map $\trop_{\calX_G(\sigma)}\colon\calX_G(\sigma)^{\beth}\rightarrow \Sigmabar_{\sigma}$. Note that these maps glue to define a map $\trop_{\calX_G(\Delta)}\colon \calX_G(\Delta)^{\beth}\rightarrow \Sigmabar_{\Delta} $, where $\Sigmabar_{\Delta}$ is the colimit of the topological spaces $\Sigmabar_{\sigma}$, furthermore the sections $J_{\calX_G(\sigma)}\colon \Sigmabar_{\sigma}\hookrightarrow \calX_G(\Delta)^{\beth}$ glue to a section $J_{\calX_G(\Delta)}\colon \Sigmabar_{\Delta} \hookrightarrow \calX_G(\Delta)^{\beth}$. 
The composition of the tropicalization map with the section gives a retraction $\bfp_{\calX_G(\Delta)}\colon \calX_G(\Delta)^{\beth}\rightarrow \calX_G(\Delta)^{\beth}$ whose image is equal to the image of the section $J_{\calX_G(\Delta)}$. Recall that there is another retraction $\bfq\colon \calX_G(\Delta)^{\beth}\rightarrow \calX_G(\Delta)^{\beth}$ given by $p\mapsto p\ast \bfg$ where $\bfg$ is the Shilov boundary point of $G^{\beth}$.

Furthermore, recall that the universal property of the colimit gives a continuous map $c \colon \calX_G(\Delta)^{\beth}\rightarrow \calX_G(\Delta)^{\an}$. For each $\sigma$ we have that the restriction of $c$ to $\calX_G(\sigma)^{\beth}$ is the natural inclusion of $\calX_G(\sigma)^{\beth}$ into $\calX_G(\Delta)^{\an}$.  The space $\calX_G(\Delta)^{\an}$ contains $G^{\an}$ and thus the space $\fD=G(k)T^{\an}G^{\beth}$. We also have that there is an embedding $\Theta\colon G^{\build}_{\Berk}\hookrightarrow G^{\an}$, by construction we have also an embedding $\iota_{\Delta\subseteq G^{\build}}\colon \Sigma_{\Delta}\hookrightarrow G^{\build}_{\weak} $ and there is a continuous bijection given by the identity $G^{\build}_{\weak}\rightarrow G^{\build}_{\Berk}$. Finally recall that we have a tropicalization map $\trop_{\build}\colon \fD\rightarrow G^{\build}_{\Berk}$ given by mapping a point of the form $gsh$ to $(g,\trop(s))\in G^{\build}_{Berk}$ where $g\in G(k)$, $s\in T^{\an}$, and $h\in G^{\beth}$. The composition $\Theta \circ \trop_{\build}$ is given by $\bfq$.

\begin{proof}[Proof of the Theorem \ref{mainthm_skeletonbuilding}]
To see that $c^{-1}(G^{\an})$ is equal to $\trop_{\calX_G(\Delta)}^{-1}(\Sigma_{\Delta})$ it suffices to observe that $G^{\an}$ is the open stratum of the toroidal embedding $G^{\an}\hookrightarrow \calX_G(\Delta)$. To see that $c$ maps $c^{-1}(G^{\an})$ into $\fD$ one can fix a maximal torus $T$ in $G$, then for $\sigma\in \Delta$ where $\sigma\in T^{\trop}$ we observe that $c$ defines a homeomorphism when restricted to $\calX_G(\sigma)^{\beth}$, and $\trop_{\calX_G(\Delta)}^{-1}(\Sigma_{\sigma})$ is contained in $T^{\an}G^{\beth}$. 
We have that $\bfq=\bfp_{\calX_G(\Delta)}$ by Lemma \ref{lemma_bfpisbfq} below.
That the diagram commutes follows from the fact that $\Theta\circ \trop_{\build}$ is given by $p \mapsto p \ast \bfg$ and the fact that $\bfq=\bfp_{\calX_G(\Delta)}$. This proves Part \eqref{item_mainthmb1}.

To show Part \eqref{item_mainthmb2} we assume now that $\Delta$ covers $G^{\build}$. It follows immediately that $\iota_{\Delta \subseteq G^{\build}}$ defines a homeomorphism onto $G^{\build}_{\weak}$ and thus $c$ defines a continuous bijection from $\Sigma_{\Delta}$ onto $\Theta(G^{\build})$. To see that $c$ defines a continuous bijection from $c^{-1}(G^{\an})$ onto $\fD$ fix a maximal torus $T$. Let $\Delta_T$ be the restriction of $\Delta$ to $T^{\trop}$. Then $\Delta_T$ is a finite fan in $T^{\trop}$ and $c^{-1}(G^{\an})\cap \calX_G(\Delta_T)^{\beth}$ is equal to $T^{\an}G^{\beth}$ because $c^{-1}(G^{\an})\cap \calX_G(\Delta_T)^{\beth}=\trop_{\calX_G(\Delta)}^{-1}(\Sigma_{\Delta_T})$. Thus the image of $c^{-1}(G^{\an})$ under $c$ is $\fD$. Furthermore we have that $\calX_G(\Delta_T)$ is locally finite type so $c$ defines a homeomorphism onto its image and so the restriction of $c$ to $T^{\an}G^{\beth}$ is an injection, therefore $c$ defines a continuous bijection from $c^{-1}(G^{\an})$ to $\fD$.
\end{proof}

\begin{lemma}\label{lemma_bfpisbfq}
    The map $\bfp_{\calX_G(\Delta)}\colon \calX_G(\sigma)^{\beth}\rightarrow J_{\calX_G(\sigma)}(\Sigmabar_{\sigma})$ is equal to $\bfq$. 
\end{lemma}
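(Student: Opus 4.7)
The plan is to verify the identity pointwise on a convenient dense subset via an explicit $\ast$-calculus, then invoke continuity of both sides. Continuity of $\bfp_{\calX_G(\Delta)}$ is part of its construction in Thuillier's theory, while continuity of $\bfq$ (extended to $\calX_G(\Delta)^\beth$ through the right $G$-action on the bordification) follows from Proposition \ref{prop_StarMultBySBPOfDifGroups}(1). The convenient dense subset is $\trop_{\calX_G(\Delta)}^{-1}(\sigma)\subseteq \calX_G(\sigma)^\beth\cap G^\an$, namely the preimage of the relative interior of $\sigmabar$, which sits inside the analytification of the open $G$-stratum.

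Fix a maximal torus $T\le G$ with $\sigma\subseteq T^\trop$. Using Proposition \ref{Prop_CartanDecomp} together with density of rational points over Puiseux extensions, I further reduce to points of the form $p=tg$ with $t\in T^\an$ satisfying $\trop(t)\in\sigma$ and $g\in G(k)$. For such $p$, reading off from the commutative diagram in Theorem \ref{mainthm_skeletonbuilding} one identifies $J_{\calX_G(\sigma)}(\lambda)=\bft_\lambda\ast\bfg$ for $\lambda\in\sigma$, and $\trop_{\calX_G(\Delta)}(tg)=\trop(t)$, so
\begin{equation*}
\bfp_{\calX_G(\Delta)}(p) \;=\; \bft_{\trop(t)}\ast\bfg.
\end{equation*}

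For $\bfq(p)$ I compute directly. Over a trivially valued base all points are peaked, so associativity (Proposition \ref{prop_StarMultBySBPOfDifGroups}(2)) is unconditional; combined with the right $G(k)$-invariance of the trivial norm on $k[G]$, which gives $g\ast\bfg=\bfg$ for $g\in G(k)$, this yields $(tg)\ast\bfg=t\ast\bfg$. Applying Proposition \ref{prop_PropertiesOfStarMultBySBP}(5) to $H=T$ and $L=G$ (so $HL=G$) produces $\bft\ast\bfg=\bfg$, whence $t\ast\bfg=(t\ast\bft)\ast\bfg$. Commutativity of $T$ (Proposition \ref{prop_StarMultBySBPOfDifGroups}(4)) together with the toric-tropicalization identity $\bft\ast t = \bft_{\trop(t)}$ then give
\begin{equation*}
\bfq(p) \;=\; \bft_{\trop(t)}\ast\bfg \;=\; \bfp_{\calX_G(\Delta)}(p),
\end{equation*}
and continuity extends the equality from the dense subset to all of $\calX_G(\sigma)^\beth$.

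The main technical obstacle I anticipate is the density and continuity step at the boundary strata: points of $\calX_G(\sigma)^\beth$ whose tropicalization lies in a proper face $\tau\subsetneq \sigmabar$ lie over non-open toroidal strata, where $p\ast\bfg$ has to be interpreted through the right $G$-action on the bordification $\calX_G(\Delta)$ rather than inside $G^\an$. Approximating such points via one-parameter degenerations $\lim_{s\to 0}u(s)$ along cocharacters $u\in\sigma$, and checking that $\bfq$ really is continuous across strata in $\calX_G(\Delta)^\beth$, is the core of the argument. A secondary subtlety is the root-stack decoration $\Phi$, which rescales $\sigmabar$ but commutes with $\ast$-multiplication, and so enters only formally.
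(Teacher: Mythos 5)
Your $\ast$-calculus computing $\bfq(p)=(tg)\ast\bfg=t\ast\bfg=(t\ast\bft)\ast\bfg=(\bft\ast t)\ast\bfg=\bft_{\trop(t)}\ast\bfg$ is correct and is in fact (implicitly) the same manipulation the paper uses in its final line. But there is a circularity in the other half of your argument: you identify $J_{\calX_G(\sigma)}(\lambda)=\bft_\lambda\ast\bfg$ by ``reading off from the commutative diagram in Theorem~\ref{mainthm_skeletonbuilding}'', yet the proof of Theorem~\ref{mainthm_skeletonbuilding} invokes precisely this lemma (``We have that $\bfq=\bfp_{\calX_G(\Delta)}$ by Lemma~\ref{lemma_bfpisbfq} below'') to establish that diagram. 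The identity $J_{\calX_G(\sigma)}(\lambda)=\bft_\lambda\ast\bfg$ is essentially the content of the lemma for points on the skeleton, so you cannot assume it.

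The paper supplies the missing independent computation of $\bfp_{\calX_G(\sigma)}$ by lifting along the dominant surjective morphism of toroidal embeddings $\calX_T(\sigma)\times G\rightarrow\calX_G(\sigma)$. Naturality of the Thuillier retraction for such a morphism yields a commuting square relating $\bfp_{\calX_T(\sigma)\times G}$ to $\bfp_{\calX_G(\sigma)}$, and on the product the retraction is explicit: the toric retraction $s\mapsto\bft_{\trop(s)}$ on $\calX_T(\sigma)^\beth$ paired with the Shilov retraction on $G^\beth$, so $\bfp_{\calX_T(\sigma)\times G}(s,g)=\omega_{\bft_{\trop(s)},\bfg}$. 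Pushing forward gives $\bfp_{\calX_G(\sigma)}(sg)=\bft_{\trop(s)}\ast\bfg=(sg)\ast\bfg$ for \emph{all} $(s,g)\in\calX_T(\sigma)^\beth\times G^\beth$, including boundary strata. This also dissolves your second (self-acknowledged) worry: the density-plus-continuity extension across strata, which you call ``the core of the argument,'' is not needed at all, since the lift covers every point of $\calX_G(\sigma)^\beth$ and the computation is uniform. If you want to salvage your route, you would have to independently derive $J_{\calX_G(\sigma)}(\lambda)=\bft_\lambda\ast\bfg$ from the \'etale-local toric charts of $\calX_G(\sigma)$, which is exactly what the product morphism packages.
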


\begin{proof}
    Notice that there is a dominant surjective morphism of toroidal embeddings $\calX_T(\sigma)\times G \rightarrow \calX_G(\sigma)$. From this it follows that we have a commutative diagram:
    \begin{equation*}
    \begin{tikzcd}
    \calX_T(\sigma)^{\beth }\times G^{\beth} \arrow[rr] \arrow[d,"\bfp_{\calX_T(\sigma)\times G}"] && \calX_G (\sigma)^{\beth} \arrow[d,"\bfp_{\calX_G(\sigma)}"] \\
    J_{\calX_T(\sigma)\times G}\left(\Sigma_{\calX_T(\sigma)\times G}\right) \arrow [rr] && J_{\calX_G(\sigma)}\left(\Sigmabar_{\sigma}\right).
        \end{tikzcd}
\end{equation*}
For a point $p\in \calX_T(\sigma)^{\beth }\times G^{\beth}$ of the form $(s,g)$ we have that $\bfp_{{\calX_T(\sigma)\times G}}(p)=\omega_{\bft_{\trop(s)}, \bfg}$ where $\omega_{\bft_{\trop(s)}}$ is the Shilov boundary of $\calM(\bft_{\trop(s)})\times \calM(\bfg)$. Thus we have that $\bfp_{\calX_G(\sigma)}(sg)= \bft_{\trop(s)}\ast\bfg = (sg)\ast \bfg$.
\end{proof}

The following commutative diagram illustrates some of the finer relations between all the various topological spaces in play.

\begin{equation*}
    \begin{tikzcd}
        \calX_G(\Delta)^\beth \arrow[dddd,"\trop_{\calX_G(\Delta)}"] \arrow[rrrrr, "c"]& & & & &\calX_G(\Delta)^{\an}\arrow[dddd,"\mathbf{q}"]\\ \\ & \trop_{\calX_G(\Delta)}^{-1}(\Sigma_{\Delta})\arrow[luu,"\supseteq"']\arrow[dd,"\trop_{\calX_G(\Delta)}"]\arrow[rr,"c"]& &  \fD \arrow[rruu,"\subseteq"]\arrow[dd,"\trop_{\build}"']\arrow[r,"\subseteq"]& G^{\an}\arrow[dd,"\mathbf{q}"]\arrow[ruu,"\subseteq"']& \\ \\
        \Sigmabar_{\Delta} \arrow[uuuu,bend left, "J_{\calX_G(\Delta)}"]& \Sigma_{\Delta}\arrow[l,"\supseteq"']\arrow[r,hookrightarrow,"i_{\Delta\subseteq G^{\build}}"]\arrow[uu,bend left,"J_{\calX_G(\Delta)}"] &  G_{\weak}^{\build} \arrow[r,"\id_{G^{\build}}"] & G^{\build}_{\Berk} \arrow[uu,bend right,"\Theta"']\arrow[r,hookrightarrow,"\Theta"] & G^{\an}\arrow[r,"\subseteq"]&\calX_G(\Delta)^{\an}
    \end{tikzcd}
\end{equation*}

\bigskip

\part{Spherical tropicalization and wonderful compactifications}
\bigskip

\section{Spherical varieties and toroidal embeddings} 
Let $X$ be a spherical $G$-variety, that is, is a normal $G$-variety such that some Borel subgroup $B\subseteq G$ has an open orbit in $X$.
Tevelev and Vogiannou introduced a tropicalization map for spherical varieties in \cite{TevelevVogiannou}, and
 in \cite{Coles} the spherical tropcialization map was extended to the Berkovich analytification of the spherical variety. As a result there is a retraction $\bfp_G\colon X^{\beth}\rightarrow \overline{\Sigma}^{G}_{X}$, where the image $\Sigmabar^{G}_{X}$ is a closed subspace of $X^{\beth}$. The spherical variety $X$ has an open $G$-orbit, which is isomorphic to a spherical homogenous space $G/H$. When the open embedding $G/H\hookrightarrow X$ is toroidal, we also have a toroidal tropicalization map together with a section giving rise to the retraction map $\bfp \colon X^{\beth}\rightarrow \overline{\Sigma}_X$.
 A priori the retractions $\bfp$ and $\bfp_G$ are distinct. In Section \ref{Sec_SpherTrop} we will show that these two maps agree. Before getting there, in this section we will review some facts about spherical varieties and collect some necessary results about the particular case of spherical varieties $X$ where the open embedding of the $G$-orbit $G/H\hookrightarrow X$ is toroidal.

\subsection{The Luna-Vust theory of spherical varieties}\label{subsubsec_LVtheory}

We will now recall the Luna-Vust theory of spherical varieties. Introduced in \cite{LunaVust}, this theory gives a combinatorial classification of $G$-equivariant open embeddings of a spherical homogeneous space $G/H$ into normal $G$-varieties. In the case when $G=G/H=T$ where $T$ is a torus the Luna-Vust theory gives the classification of toric varieities in terms of fans. The results of this subsection are not original. We follow the exposition given in \cite{Knop}, to which we also refer to for the proofs of the results quoted below.

\begin{definition}
    Let $X$ be a normal $G$-variety. We say $X$ is \textit{spherical} if there is an open $B$-orbit for some Borel subgroup $B\subseteq G$. If $G/H$ is a spherical homogeneous space, and $X$ is a normal $G$ variety equipped with a $G$-equivariant open embedding $G/H\hookrightarrow X$ then we say $X$ is a $G/H$-\textit{embedding}.
\end{definition}

To state the classification of  $G/H$-embeddings $G/H\hookrightarrow X$ we need to introduce some data associated to $G/H$. Fix a Borel subgroup of $B\subseteq G$ with open orbit in $G/H$. Let $M$ be the character lattice of $B$, and define the $B$ semi-invariant rational functions to be:
\[
k(G/H)^{(B)}\coloneq\left\{
  f\in k(G/H) \;\middle|\;
  \begin{aligned}
  & \textrm{There exists $\chi_f\in M$ such that for any } \\
  & b\in B(k) \textrm{ we have } b\cdot f= \chi_f(b)\cdot f
  \end{aligned}
\right\}
\]
Because the $B$-orbit is open we have a well defined map $k(G/H)^{(B)}\rightarrow M$ given by $f\mapsto \chi_f$ the image of this map is called the \textit{weight lattice of }$G/H$, we will denote it $M(G/H)$, and the kernel of this map is exactly $k^{\times}$. Let $N(G/H)\coloneq \Hom(M(G/H), \ZZ)$, we call $N(G/H)$ the \textit{coweight lattice of} $G/H$. Define $N_{\R}(G/H)\coloneq N(G/H)\otimes_{\ZZ} \R$. Given any valuation $\val:k(G/H)^{\times} \rightarrow \R$ we can define an element $\varrho(\val)\in N(G/H)$ by $\varrho(\val)(\chi_f)=\val(f)$. Let $\calV$ be the set of $G(k)$-invariant valuations on $k(G/H)^{\times}$, then $\varrho:\calV\rightarrow N(G/H)$ is an injection and the image is a finitely generated convex cone that spans $N(G/H)$ by \cite[Corollary 5.3]{Knop}. We will identify $\calV$ with its image in $N(G/H)$. The cone $\calV$ is also cosimplicial, meaning that there are linearly independent characters $\chi_1,\ldots \chi_k$ such that 
\begin{equation*}
\calV=\big\{\lambda \in N(G/H) \mid \lambda(\chi_i)\geq 0 \textrm{ for any $i=1,\ldots k$}\big\} \ .
\end{equation*}
Note that for valuations which are \textit{not} necessarily $G(k)$-invariant the map $\varrho$ won't necessarily be injective. One particularly important source of elements in $N(G/H)$ are $B$-invariant divisors. For any $X\supseteq G/H$ let 
\begin{equation*}
\calD(X)=\{D\subset X \mid \textrm{$D$ a $B$-invariant irreducible divisor}\} \ .
\end{equation*}
For an irreducible divisor $D\subseteq X$ let $\val_D$ be the valuation on $k(G/H)$ define by $D$.

We can now define colored cones and colored fans.

\begin{definition}
    A \textit{colored cone} is a pair $(\sigma, \calF)$, where $\sigma \subseteq N_{\R}(G/H)$ is a strictly convex finitely generated rational cone, and $\calF\subseteq \calD(G/H)$ is a finite subset, such that:
    \begin{itemize}
        \item $\sigma$ is generated by $\{\varrho(\val_D) \mid D\in \calF \}$, and finitely many elements of $\calV$,
        \item The intersection of the relative interior of $\sigma$ with $\calV$ is nonempty,
        \item $0\notin \big\{\varrho(\val_D) \mid D\in \calF \big\}$.
    \end{itemize}
\end{definition}

Let $Y\subseteq X$ be a $G$-orbit,  define 
\begin{equation*}\calD_Y(X):=\big\{D\in \calD(X) \mid D\supseteq Y\big\}
\end{equation*}
and 
\begin{equation*}
\calF_Y(X):= \big\{D \in \calD_Y(X)\big\} \ .
\end{equation*}
Let $\sigma_Y(X)$ be the cone in $N_{\R}N(G/H)$ generated by $\big\{\varrho(\val_D) \mid D\in \calD_Y(X)\big\}$. Then $(\sigma_Y(X),\calF_Y(X))$ is a colored cone. We say that $X$ is a \textit{simple} $G/H$-embedding if it has a unique closed $G$-orbit.

\begin{proposition}[{\cite[Theorem 3.1]{Knop}}]
    The map $X\mapsto \big(\sigma_Y(X),\calF_Y(X) \big)$, where $Y$ is the unique closed $G$-orbit of $X$, defines a bijection between isomorphism classes of simple spherical embeddings $G/H \hookrightarrow X$ and colored cones in $N_{\R}(G/H)$.
\end{proposition}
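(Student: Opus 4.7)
The plan is to produce an inverse to $X\mapsto (\sigma_Y(X),\calF_Y(X))$ and verify that both compositions recover the starting data up to isomorphism. The strategy rests on the observation that any simple $G/H$-embedding $X$ with closed orbit $Y$ has a distinguished $B$-stable open affine subset, and this subset admits a purely combinatorial description in terms of a colored cone.

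First I would check that the map is well-defined: given a simple embedding $X$ with closed orbit $Y$, one shows that $\sigma_Y(X)$ is strictly convex, finitely generated, rational, and meets $\calV$ in its relative interior. The non-degeneracy statements follow from applying the local structure theorem of Brion--Luna--Vust to a $B$-chart containing $Y$, which produces a parabolic $P\supseteq B$ and an affine $T$-stable slice transverse to the open $P$-orbit; the $G(k)$-invariant valuations centered on $Y$ realize precisely the lattice points in the relative interior of $\sigma_Y(X)$ meeting $\calV$.

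Next I would construct the inverse. Given a colored cone $(\sigma,\calF)$, define the $B$-stable piece $X_0(\sigma,\calF)$ by specifying its coordinate ring as
\begin{equation*}
k[X_0(\sigma,\calF)] = \bigoplus_{\chi}\, k(G/H)^{(B)}_{\chi},
\end{equation*}
where the sum runs over $\chi\in M(G/H)$ satisfying $\langle \varrho(\val_D),\chi\rangle\geq 0$ for all $D\in\calF$ and $\langle \lambda,\chi\rangle\geq 0$ for all $\lambda\in\sigma\cap\calV$. One verifies that this is a finitely generated normal $k$-algebra containing $k[G/H]^{B_u}$ (where $B_u$ is the unipotent radical), using Gordan-type arguments and the cosimpliciality of $\calV$. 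Then define $X(\sigma,\calF) = G\cdot \Spec k[X_0(\sigma,\calF)]$, glued along the open $G/H$-locus via Sumihiro's theorem on $G$-equivariant completions. To see that this is simple with closed orbit corresponding exactly to $(\sigma,\calF)$, I would identify the closed $G$-orbit as the vanishing locus of the $B$-weight spaces of weight $\chi$ with $\chi$ in the relative interior of $\sigma^\vee$ translated appropriately, then read off the colors from the $B$-stable prime divisors it is contained in.

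The two compositions are inverse up to isomorphism by essentially the same calculation: given $X$, the $B$-stable open set $X\setminus \bigcup_{D\in\calD(X)\setminus\calD_Y(X)} D$ has coordinate ring equal to the weight-space description above for $(\sigma_Y(X),\calF_Y(X))$, by unwinding what it means for a $B$-semi-invariant regular function to extend across each divisor in $\calF_Y(X)$ and across each $G$-invariant divisor corresponding to a ray of $\sigma_Y(X)\cap\calV$. The main obstacle is verifying finite generation and normality of $k[X_0(\sigma,\calF)]$ together with the fact that $G\cdot X_0(\sigma,\calF)$ is separated; separatedness requires showing that any $G$-invariant valuation of $k(G/H)$ dominating two points of $G\cdot X_0$ must dominate the same point, which reduces via the valuative criterion to the statement that $\calV\cap\sigma$ corresponds to a single cone in the colored fan sense. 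All of the above is carried out in detail in \cite{Knop}, to which we refer.
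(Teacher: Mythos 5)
The paper does not prove this statement; it is quoted from \cite{Knop} and used as a black box, so there is no ``paper's proof'' to compare against. Taken on its own terms, your sketch follows the general Luna--Vust/Knop strategy (well-definedness, inverse construction, compositions), but the central formula is not correct. The direct sum
\begin{equation*}
\bigoplus_{\chi}\, k(G/H)^{(B)}_{\chi}
\end{equation*}
over the admissible weights is the ring of $B$-semi-invariants that extend over the boundary, not the coordinate ring of the $B$-chart $X_0$. For a concrete counterexample take $G=\SL_2$, $X=\A^2$, $G/H=\A^2\setminus\{0\}$: the $B$-chart for the closed orbit $\{0\}$ is all of $\A^2$, with coordinate ring $k[x,y]$, but the $B$-semi-invariant rational functions on $G/H$ are spanned by powers of $y$ alone. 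By the local structure theorem $X_0\cong P_u\times Z$ with $Z$ a toric slice, so $k[X_0]\cong k[P_u]\otimes k[Z]$; your direct sum recovers only the factor corresponding to $Z$ and discards the unipotent part.

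This is not a cosmetic slip, because it makes the next step ill-posed: ``$X(\sigma,\calF)=G\cdot\Spec k[X_0(\sigma,\calF)]$'' is not a construction, since $\Spec$ of your ring is a toric variety with no natural $B$- or $G$-action and no given embedding into an ambient $G$-variety in which the $G$-sweep could be formed. To make the inverse precise one either (a) reconstructs the full $B$-chart $P_u\times Z$ together with the $P$-action and shows the resulting variety admits a unique $G$-saturation (this is where the local structure theorem actually does its work), or (b) follows Knop and works at the level of the local ring of the closed orbit, characterizing it as an intersection of the relevant valuation rings determined by $\sigma$ and $\calF$. Your sketch gestures at the right ingredients --- the local structure theorem, semi-invariants, the valuative criterion for separatedness --- but the inverse map as written does not produce a variety, so there is a genuine gap.
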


This correspondence can in fact be stated as an equivalence of categories but for the purposes of this article we do not need the details.
In the case when $G=G/H=T$ is a torus, a simple embedding is the same as an affine variety, and in analogy with toric varieties, a spherical variety is glued from simple varieties, with the gluing specified by a colored fan.

\begin{definition} A \textit{colored face} of a colored cone $(\sigma,\calF)$ is a colored cone $(\tau, \calE)$ such that $\tau$ is a face of $\sigma$ and $\calE=\{D\in \calF \mid \varrho(\val_D)\in \tau \}$.
\end{definition}

\begin{proposition}[{\cite[Lemma 3.2]{Knop}}]
    Let $Y$ be a $G$-orbit in $X$, the map $Z\mapsto \big(\sigma_Z(X), \calF_Z(X)\big)$ defines a bijection between colored faces of $\big(\sigma_Y(X),\calF_Y(X)\big)$ and $G$-orbits $Z\supseteq Y$.
\end{proposition}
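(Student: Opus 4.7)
The plan is to reduce to the classification of simple spherical embeddings and then invoke a local structure theorem. A first observation is that, whenever $Y$ and $Z$ are $G$-orbits in $X$, the relation $Z\supseteq Y$ must be read in the closure order, i.e.\ $Y\subseteq\overline{Z}$; set-theoretic containment of orbits would force $Z=Y$ by disjointness of orbits. Having fixed this convention, I would first replace $X$ by the open $G$-stable subset
\begin{equation*}
    X_Y := X \setminus \bigcup_{\substack{Z \text{ a } G\text{-orbit with}\\ Y \nsubseteq \overline{Z}}} \overline{Z},
\end{equation*}
which is a $G/H$-embedding whose $G$-orbits are precisely those $Z$ with $Y\subseteq\overline{Z}$, and whose unique closed orbit is $Y$. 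Since $\calD_Z(X_Y)=\calD_Z(X)$ for such $Z$, this reduction leaves the map $Z\mapsto(\sigma_Z,\calF_Z)$ unchanged, so I may assume that $X$ is simple with closed orbit $Y$.

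Next I would check well-definedness of the map into colored faces. If $Y\subseteq\overline{Z}$, then every $D\in\calD_Z(X)$ is closed and $G$-stable, so $D\supseteq\overline{Z}\supseteq Y$ and hence $D\in\calD_Y(X)$; this gives $\calF_Z\subseteq\calF_Y$ and $\sigma_Z\subseteq\sigma_Y$. To promote this inclusion to a colored \emph{face}, I would invoke the local structure theorem of Brion--Luna--Vust: around any point of $Y$ there is a $B$-stable affine open neighborhood of $X$ of the form $R_u(P)\times S$, where $R_u(P)$ is the unipotent radical of a parabolic $P\subseteq G$ associated with $\calF_Y$ and $S$ is a toric variety for a quotient of a maximal torus with associated cone $\sigma_Y$. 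Under this splitting, the $G$-orbits of $X$ meeting this neighborhood are in order-reversing bijection with the $T$-orbits of $S$, which by the classical toric orbit-cone correspondence correspond to the faces of $\sigma_Y$. The coloring is tracked by which $B$-invariant divisors of $X$ contain the relevant stratum of $S$, and from this one reads off the identity $\calF_Z=\{D\in\calF_Y\mid \varrho(\val_D)\in\sigma_Z\}$, so that $(\sigma_Z,\calF_Z)$ is indeed a colored face of $(\sigma_Y,\calF_Y)$.

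Injectivity and surjectivity then follow from the simple-embedding classification recalled in the previous proposition. For surjectivity, given a colored face $(\tau,\calE)$ of $(\sigma_Y,\calF_Y)$, that proposition produces a unique simple $G/H$-embedding $X_\tau$ with this colored cone; because $(\tau,\calE)$ comes from a face of $(\sigma_Y,\calF_Y)$, the embedding $X_\tau$ realizes canonically as an open $G$-stable subvariety of $X$, and its unique closed $G$-orbit is the desired $Z$. For injectivity, the same classification recovers $Z$ as the open $G$-orbit of the simple subembedding determined by $(\sigma_Z,\calF_Z)$. The main obstacle is establishing the local product decomposition $R_u(P)\times S$: once it is in place, every remaining claim is essentially a repackaging of the toric orbit-cone correspondence, but the splitting itself requires nontrivial input on how a well-chosen parabolic acts on an affine neighborhood of $Y$ and is the technical heart of the Luna--Vust theory.
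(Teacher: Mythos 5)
This proposition is quoted verbatim from Knop's survey (his Lemma 3.2), and the paper supplies no proof: the subsection opens with a disclaimer that all results there are taken from Knop, to whom the reader is referred for proofs. So there is no in-paper argument to compare against, and your attempt must be judged on its own terms.

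Your reading of the (slightly abbreviated) hypothesis $Z\supseteq Y$ as $Y\subseteq\overline Z$ is correct, and the reduction to a simple embedding via $X_Y$ is standard and works. The overall plan—reduce to the simple case, use the local structure theorem to see the orbits meeting a suitable $B$-chart as torus orbits of a toric variety, read off the colors, and then finish with the simple-embedding classification—is essentially the proof in Knop. That said, one step is stated imprecisely in a way that matters. The version of the local structure theorem quoted in the paper (Theorem \ref{thm_LocalStructureTheorem}) is only for \emph{toroidal} spherical embeddings, where the slice $Z$ is genuinely toric. For a simple embedding with colors, the analogous statement is Knop's Theorem 2.3 about $B$-charts: after removing all $B$-stable divisors not containing $Y$, the resulting $B$-stable affine open set $X_{B,Y}$ decomposes as $P_u\times Z$ with $Z$ the closure of a torus orbit. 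That is the ingredient you actually need, and your phrase ``$S$ is a toric variety for a quotient of a maximal torus with associated cone $\sigma_Y$'' is not a formal consequence of a Levi decomposition; it packages Knop's Theorem 2.3 together with the content of the classification theorem that identifies the cone of that toric slice with $\sigma_Y(X)$ under $\varrho$. If you want the argument to be self-contained, you should cite the $B$-chart local structure theorem explicitly rather than the toroidal one, and note that matching the cone of $S$ with $\sigma_Y(X)$ (a cone drawn in $N_\R(G/H)$ through $\varrho$) is where the simple-embedding classification enters, rather than treating it as automatic.
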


\begin{definition}
    A \textit{colored fan} $\frakF$ is a finite collection of colored cones such that:
    \begin{itemize}
        \item   for any $(\sigma,\calF)\in \frakF$ and any colored face $(\tau,\calE)$ of $(\sigma,\calF)$, then $(\tau,\calE)\in \frakF$
        \item for any $\val\in \calV$ there is at most one colored cone $(\sigma,\calF)\in\frakF$ containing $\val$ in its relative interior.\footnote{This second condition implies that for any two colored cones $(\sigma,\calF)$ and  $(\tau,\calE)$ the intersection of $\sigma$ and $\tau$ is  face of both $\sigma $ and $\tau$ if the intersection is contained in $\calV$. It is not true that $\sigma$ and $\tau$ intersect in a face if the intersection lies outside of $\calV$, i.e. the underlying collection of cones in $\frakF$ is not necessarily a fan unless the cones are contained in $\calV$. We refer the reader to \cite[Theorem 1.4]{Gagliardi} for a geometric interpretation of when the underlying cones fail to form a fan.}.
    \end{itemize}
\end{definition}

\begin{theorem}[{\cite[Theorem 3.3]{Knop}}]
    The map $X\mapsto \frakF(X)$ is a bijection between isomorphism classes of $G/H$-embeddings and colored fans in $N_{\R}(G/H)$.
\end{theorem}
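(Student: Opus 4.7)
The plan is to bootstrap from the simple case (the previous proposition) by a gluing construction modeled on the classical passage from cones to fans in toric geometry, taking care of the additional combinatorial data provided by the colors.

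First I would build the forward map $X \mapsto \frakF(X)$. For every $G$-orbit $Y \subseteq X$ one shows there is a unique $G$-invariant open subvariety $X_Y \subseteq X$ that is simple with closed orbit $Y$; concretely $X_Y = X \setminus \bigcup_{D \not\supseteq Y} D$ where $D$ runs over the $G$-invariant prime divisors not containing $Y$. By the simple classification, $X_Y$ corresponds to the colored cone $(\sigma_Y(X), \calF_Y(X))$. I would then check that $\frakF(X) := \{(\sigma_Y(X),\calF_Y(X)) : Y \text{ a } G\text{-orbit of } X\}$ is a colored fan. The face axiom is immediate from the previous proposition, since colored faces of $(\sigma_Y(X),\calF_Y(X))$ biject with $G$-orbits $Z \supseteq Y$, and for such $Z$ the simple open $X_Y$ is an open subembedding of $X_Z$ realizing the colored face. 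The relative-interior axiom follows from the observation that if $\val \in \calV$ specializes to distinct $G$-orbits $Y \neq Z$ then the associated prime divisors would have to coincide, contradicting the separateness of $X$.

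Conversely, given a colored fan $\frakF$, I would construct a $G/H$-embedding by gluing: for each colored cone $(\sigma,\calF) \in \frakF$ take the simple embedding $X_{(\sigma,\calF)}$ from the previous proposition and, whenever $(\tau,\calE)$ is a common colored face of $(\sigma,\calF)$ and $(\sigma',\calF')$, glue $X_{(\sigma,\calF)}$ to $X_{(\sigma',\calF')}$ along the canonical open subembedding $X_{(\tau,\calE)}$. The colored-face/sub-orbit bijection makes these identifications canonical, and the cocycle condition for triple overlaps reduces to functoriality of this bijection under passage to further faces. The resulting $G$-variety is then shown to be separated using the relative-interior uniqueness axiom, combined with the valuative criterion applied only to $G(k)$-invariant valuations (which is sufficient in the spherical setting because such valuations determine the embedding, a key input from Luna--Vust theory).

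Finally, I would check that the two constructions are mutually inverse. Starting from a colored fan $\frakF$ and forming the associated variety $X$, the simple open subembedding $X_Y$ attached to the closed $G$-orbit $Y \subseteq X_{(\sigma,\calF)}$ is, by construction, $X_{(\sigma,\calF)}$ itself, recovering $(\sigma,\calF)$. Starting from $X$, the simple pieces $X_Y$ cover $X$ and their tautological gluings reproduce $X$.

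The main obstacle will be the separatedness step in the second direction: one must rule out non-separated identifications along the gluing, and the relevant input — that $G(k)$-invariant valuations suffice to detect separatedness of spherical embeddings — is substantially deeper than its toric analogue. A secondary subtlety, absent in the toric case, is that the colors $\calF$ are not freely chosen but are constrained by which $B$-divisors actually pass through the relevant closed orbit; one must verify in the gluing that imposing compatibility on colors along common faces is both necessary and sufficient for the resulting $G$-variety to have the prescribed colored fan.
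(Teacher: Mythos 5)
The paper does not prove this theorem; it is quoted directly from Knop's survey, and the paragraph preceding it explicitly says ``We follow the exposition given in \cite{Knop}, to which we also refer to for the proofs of the results quoted below.'' Your outline, however, does match the strategy of the proof you would find in Knop (and originally in Luna--Vust): decompose a $G/H$-embedding into its simple $G$-stable open pieces indexed by $G$-orbits, classify simple pieces by colored cones (the cited ``previous proposition''), build the colored fan from compatibility of colored faces with sub-orbits, and in the reverse direction glue simple embeddings along common colored faces, with separatedness checked via the fact that a $G(k)$-invariant valuation of $k(G/H)$ has at most one center on a separated $G/H$-embedding.

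Two small clarifications worth making explicit. First, the simple open piece attached to a $G$-orbit $Y$ is most cleanly described as $X_Y=\{x\in X : Y\subseteq\overline{Gx}\}$; your description via removal of $G$-invariant divisors not containing $Y$ needs a word of justification because not every proper $G$-stable closed subset is a union of divisors (though the two descriptions do agree here). Second, the phrase about ``the associated prime divisors would have to coincide'' in the relative-interior step is not quite the right mechanism: the actual argument is that an element of $\calV$ lying in the relative interiors of two distinct colored cones would be a $G$-invariant valuation with two distinct centers on $X$, contradicting the valuative criterion of separatedness restricted to $G$-invariant valuations --- which, as you correctly flag, is the deeper Luna--Vust input that makes the whole theory work.
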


A $G$-orbit $Y\subseteq X$ is a spherical homogenous space (\cite[Corollary 2.2.]{Knop}) and the embedding $Y\hookrightarrow \overline Y$, where $\overline{Y}$ is the closure of $Y$ in $X$, is a $Y$-embedding. We can describe the colored fan $\mathfrak{F}(\overline{Y})$ in terms of $\mathfrak{F}(X)$ as follows. 
Let $\tau$ be the cone in $\mathfrak{F}(X)$ that corresponds to $Y$, then consider $M(\tau)=\tau^{\perp}\cap M(G/H)$. This consists of $B$ semi-invariant rational functions that have no poles or zeros on $Y$, and so we can restrict these functions to $Y$, thus giving a map $M(\tau)\rightarrow M(Y)$. This map is an isomorphism by Theorem 6.3 of \cite{Knop}, so we have an inclusion $\iota: M(Y)\hookrightarrow M(G/H)$ and thus an induced surjection $\iota^*:N_{\R}(G/H)\rightarrow N_{\R}(Y)$. If $N_{\R}(\tau)=N_{\R}(G/H)/\vspan(\tau)$ then this surjection is just the quotient map $N_{\R}(G/H)\rightarrow N_{\R}(\tau)$. Define
\[
\calD_{\iota}:=\big\{D\in \calD(G/H) \mid \textrm{$D\cap Y$ is a divisor of $Y$.}\big\}
\]
and
\[
\Star(\tau,\calE):=\big\{(\iota^*(\sigma),\calF\cap \calD_{\iota}) \mid (\sigma,\calF)\in \mathfrak{F}(X) \textrm{ and $(\tau,\calE)$ is a colored face of $(\sigma,\calF)$}\big\}.
\]

\begin{theorem}\label{thm_OrbitFan}
    Let $Y$ be a $G$-orbit of $X$ corresponding to the colored cone $(\tau, \calE)\in \mathfrak{F}(X)$. Then $M(Y)=M(\tau)$, $N_{\R}(Y)=N_{\R}(\tau)$, and $\calD(Y)=\calD_{\iota}$. Furthermore if $\overline{Y}$ is the closure of $Y$ in $X$ then $\mathfrak{F}(\overline{Y})$ is $\Star(\tau,\calE)$. 
\end{theorem}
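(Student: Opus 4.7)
The plan is to reduce most assertions to Knop's Local Structure Theorem (specifically \cite[Theorem 6.3]{Knop}, which has already been invoked in the paragraph preceding the statement) together with the orbit/colored-face correspondence recalled just above. The identification $M(Y)=M(\tau)$ is the non-formal content; once it is in hand, everything else follows either by duality or by a direct unwinding of definitions. So I would begin by quoting Knop to get $M(Y)=M(\tau)=\tau^{\perp}\cap M(G/H)$: every $B$-semi-invariant rational function on $Y$ extends to a $B$-semi-invariant rational function on a $B$-stable neighborhood of the generic point of $Y$ in $X$, and such an extension must have weight pairing trivially with $\tau$ (else it would have poles or zeros along the divisors $D\in\calE$, contradicting regularity on an open subset of $Y$); conversely restriction defines a map $M(\tau)\to M(Y)$ whose inverse is this extension procedure.

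Given this, the equality $N_{\R}(Y)=N_{\R}(\tau)$ is a purely formal consequence of dualizing $M(Y)=\tau^{\perp}\cap M(G/H)$: the dual of $\tau^{\perp}\cap M(G/H)$ is exactly the quotient $N_{\R}(G/H)/\vspan(\tau)$, which is $N_{\R}(\tau)$ by definition. The identification $\calD(Y)=\calD_{\iota}$ is then essentially the definition of $\calD_{\iota}$: a $B$-invariant prime divisor of $Y$ is, via restriction, exactly a $B$-invariant prime divisor of $G/H$ whose closure in $X$ meets $Y$ in codimension one (the other direction, that $D\cap Y$ is $B$-stable and irreducible when nonempty and of the right dimension, follows because $B$ has an open orbit on $Y$ as well).

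For the final assertion $\mathfrak{F}(\overline{Y})=\Star(\tau,\calE)$ I would proceed as follows. The $G$-orbits of $\overline{Y}$ are precisely the $G$-orbits of $X$ contained in $\overline{Y}$, which, by the second cited proposition of Knop, are exactly those orbits $Z$ such that $(\sigma_Z(X),\calF_Z(X))$ has $(\tau,\calE)$ as a colored face. For such a $Z$, the $G(k)$-invariant valuations on $k(\overline{Y})$ coming from divisors $D\in\calD_Z(X)\setminus\calD_Z(\overline{Y})$ vanish upon restriction, so under $M(Y)\hookrightarrow M(G/H)$ the cone $\sigma_Z(\overline{Y})\subseteq N_{\R}(Y)$ is the image $\iota^{\ast}(\sigma_Z(X))$, while the colors of $(\sigma_Z(\overline{Y}),\calF_Z(\overline{Y}))$ are precisely those $D\in\calF_Z(X)$ that restrict to divisors on $Y$, i.e.\ $\calF_Z(X)\cap\calD_{\iota}$. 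This is exactly the recipe defining $\Star(\tau,\calE)$.

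The main obstacle is the first identification $M(Y)=M(\tau)$: this rests on the Local Structure Theorem and is not something one can derive by combinatorial unwinding. Once we grant it (citing Knop), the remainder of the theorem is bookkeeping with the colored-cone/colored-face dictionary. A minor technical care point is that, while colored fans need not be fans of cones in the usual sense outside $\calV$, all the cones appearing here lie in a common apartment $N_{\R}(Y)$ and the intersections with $\calV_{Y}$ behave well, so the combinatorial gluing required to recognize $\Star(\tau,\calE)$ as a colored fan presents no difficulty.
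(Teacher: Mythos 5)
Your proposal is correct and takes the same route as the paper: the paper does not prove this theorem but recalls it from \cite{Knop}, citing \cite[Theorem~6.3]{Knop} for the non-formal identification $M(Y)\cong M(\tau)$ and otherwise deferring to Knop for the proofs of the results in Subsection~\ref{subsubsec_LVtheory}, which is exactly what your proposal does together with the (correct) observation that the remainder is duality and bookkeeping with the colored-face correspondence. The one minor slip in your heuristic for why an extension of $f$ must have weight in $\tau^{\perp}$ is that the obstruction should run over all $D\in\calD_Y(X)$ (the generators $\varrho(\val_D)$ of $\tau$) rather than only the colors $D\in\calE$, since $\tau$ is also spanned by classes of $G$-stable boundary divisors containing $Y$; but as you are in any case citing Knop for this step, the point is cosmetic.
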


\subsection{Toroidal spherical embeddings}

We will recall some results on when an $G/H$-embedding is also a toroidal embedding, and discuss properties satisfied by such embeddings.

\begin{definition}
    We say that a $G/H$-embedding $G/H\hookrightarrow X$ is a \textit{toroidal spherical embedding} if the associated colored fan $\mathfrak{F}(X)$ contains no colors.
\end{definition}

\begin{remark}
    Notice that the Luna-Vust theory gives a bijection between fans in $\calV$ and toroidal spherical embeddings.
\end{remark}

\begin{example}\label{Ex_ToroidalEmbed}
Fix a reductive group $G$, a Borel $B$, and maximal torus $T$.
The group $G\times G$ acts on $G$ via $(g,h)\cdot x\mapsto gxh^{-1}$. This makes the group $G$ a spherical $G\times G$-variety as the group $B\times B^{\opp}$ has an open orbit in $G$. Given our choice of Borel and torus the coweight lattice of $G/H$, $N(G/H)$, is identified with the cocharacter lattice of $T$, and the valuation cone $\calV \subseteq N_{\R}(G/H)$ is identified with the negative Weyl chamber defined by $B$ (i.e. the Weyl chamber defined by $B^{\opp}$). Toroidal $G\times G$-equivariant embeddings $G\hookrightarrow X$ are classified by fans supported in $\calV$. If $G$ is of adjoint type, then the Weyl chamber $\calV$ is a strictly convex cone and the fan defined by $\calV$ gives the wonderful compactification of $G$ (in the sense of \cite{deConciniProcesi}).
\end{example}

Let $X$ be a toroidal spherical embedding, let $X_0=X\setminus \left(\cup_{D\in \calD(G/H)} \overline{D}\right)$ and let $P$ be the stabilizer of $X_0$. Then $P$ is a parabolic subgroup containing $B$. Let $L$ be a Levy subgroup of $P$ and let $P_u$ be the unipotent radical of $P$, so $L\cap P_u = {e}$ and $P=LP_u$. The following theorem then gives a description of the local structure of $X$.

\begin{theorem}[\cite{PerrinSurvey} Proposition 3.3.2]\label{thm_LocalStructureTheorem} Let $G/H\hookrightarrow X$ be a toroidal spherical embedding.
There is a closed $L$-stable subvariety $Z\subseteq X_0$ whose $L$-orbits are in bijection with with $G$-orbits of $X$ via $Lx\mapsto Gx$. Furthermore there is an isomorphism $P_u\times Z \mapsto X_0$ given by $(p,z)\mapsto pz$. The variety $Z$ is in fact a toric variety whose open torus $S$ is a quotient of $L/[L,L]$.
\end{theorem}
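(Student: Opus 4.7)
The plan is to follow the classical two-step strategy for local structure theorems of Brion--Luna--Vust: first establish the multiplicative structure on the open orbit $G/H$, then extend to the toroidal embedding using the absence of colors. On the open orbit, the subset $(G/H)\cap X_0$ equals the complement in $G/H$ of all $B$-stable prime divisors, which is an affine $B$-invariant open. The classical local structure theorem for spherical homogeneous spaces (see e.g.\ \cite{PerrinSurvey} and \cite{Knop}) produces a base point $x_0 \in (G/H)\cap X_0$ and identifies $(G/H)\cap X_0$ with $P_u \times L\cdot x_0$ via multiplication, using the Levi decomposition $P = P_u \rtimes L$ and the fact that $P_u$ acts freely with slice $L\cdot x_0$.

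The main step is to extend this to $X_0$. Since $X$ is toroidal, no color of $G/H$ has closure equal to a $G$-invariant boundary divisor of $X$, and by the definition of $X_0$ every $B$-stable prime divisor of $X$ meeting $X_0$ is automatically $G$-stable (hence $P$-stable). This ensures that $X_0$ is $P$-invariant. Define $Z$ to be the closure of $L \cdot x_0$ in $X_0$; then $Z$ is an $L$-stable subvariety. I would verify that the multiplication map
\[
m\colon P_u \times Z \longrightarrow X_0, \qquad (p,z) \longmapsto p\cdot z
\]
is an isomorphism stratum-by-stratum. Each $G$-orbit $Y$ of $X$ meeting $X_0$ is again a spherical homogeneous space by Theorem \ref{thm_OrbitFan}, its intersection with $X_0$ is a single $P$-orbit, and the bijection between $L$-orbits on $Z$ and $G$-orbits on $X$ follows by matching both sides to cones in the colorless fan $\mathfrak{F}(X)$. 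Applying the homogeneous-space local structure theorem on each stratum gives bijectivity of $m$, and smoothness of $m$ along the dense orbit combined with $P_u$-equivariance upgrades $m$ to an isomorphism.

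Finally, to identify $Z$ as a toric variety, I would observe that the isotropy group $L_{x_0}$ of $x_0$ in $L$ contains $[L,L]$, as a consequence of the spherical local structure theorem (which shows that $L\cdot x_0$ is a quotient of the torus $L/[L,L]$). Thus the $L$-action on $Z$ factors through the torus $S := L/L_{x_0}$, itself a quotient of $L/[L,L]$, and $L\cdot x_0 \cong S$ is dense in $Z$. Since the $G$-stable prime divisors of $X$ meeting $X_0$ correspond to the rays of the colorless fan $\mathfrak{F}(X)$ by Theorem \ref{thm_OrbitFan}, the fan of $Z$ as an $S$-toric variety is exactly $\mathfrak{F}(X)$. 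The main obstacle in this plan is the étale verification for $m$ across boundary strata: this is precisely where the toroidal hypothesis is indispensable, since for a general spherical embedding the closures of colors intersect boundary orbits in ways that obstruct the transversality required for $m$ to be étale there.
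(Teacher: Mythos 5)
The paper does not prove this statement: it is imported verbatim from Perrin's survey (cited as Proposition~3.3.2 there), so there is no in-house proof for your attempt to match. That said, your outline is a reasonable sketch of the standard Brion--Luna--Vust argument, and it correctly identifies the pieces: pass to the open $B$-orbit, apply the local structure theorem for spherical homogeneous spaces to produce $x_0$ and the decomposition $P_u\times L\cdot x_0$, take $Z$ to be the closure of $L\cdot x_0$ in $X_0$, and use the colorless fan to match $L$-orbits of $Z$ with $G$-orbits of $X$ cone-by-cone.

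One thing you should correct: you say the toroidal hypothesis is ``indispensable'' for the \'etale verification of the multiplication map $m\colon P_u\times Z\to X_0$. In fact the local structure theorem (with the conclusion $X_0\cong P_u\times Z$ for a closed $L$-stable affine $Z$) holds for \emph{every} spherical embedding, toroidal or not --- see \cite[Theorem~2.3]{Knop}, where $X_0$ is defined relative to a chosen closed orbit $Y$ by removing only the closures of $\calD\setminus\calD_Y(X)$. What the toroidal hypothesis buys is different and twofold: (i) since $\calF_Y(X)=\emptyset$ for every orbit $Y$, removing \emph{all} color closures still leaves an $X_0$ that meets every $G$-orbit, so the bijection $Lz\mapsto Gz$ between $L$-orbits of $Z$ and $G$-orbits of $X$ actually holds; and (ii) the $L$-action on $Z$ factors through the torus $S=L/L_{x_0}$ (a quotient of $L/[L,L]$) \emph{with $Z$ toric}, because the absence of colors in the boundary ensures that the closed $L$-orbits in $Z$ are fixed points. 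Without toroidality, $Z$ is still a closed $L$-stable slice and $m$ is still an isomorphism, but $Z$ need not be a toric variety and $X_0$ may miss some orbits. You also need to be a little careful in the final step: proving $m$ is an isomorphism (rather than merely bijective) is genuinely the technical heart of the theorem, and you honestly flag this as the main obstacle; in characteristic zero one typically finishes by noting that a bijective birational morphism onto the normal variety $X_0$ is an isomorphism, or by running the original tangent-space/\'etale argument of Brion--Luna--Vust.
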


A particular result of the above theorem is that $GX_0=X$, so for any point $x\in X$ there is an open neighborhood that is isomorphic to $P_u\times Z$. Recall that any unipotent subgroup, in particular $P_u$, is isomorphic as a variety to $\A ^r_k$ for some $r\geq 0$. It also follows from the local structure theorem that the open $B$-orbit $BH/H$ is isomorphic to $P_u\times S$. The following diagram illustrates this:
\begin{equation*}\begin{tikzcd}
 && P_u\times Z \arrow[rrd, "\sim"] &&\\
P_u\times S \arrow[d,hookrightarrow] \arrow[urr,hookrightarrow]\arrow[rr, "\sim"] & &PH/H\arrow[d,hookrightarrow] \arrow[rr, hookrightarrow] & & X_0\arrow[d, hookrightarrow] \\
G\times S  \arrow[rr, twoheadrightarrow]\arrow[rrd,hookrightarrow] & & G/H  \arrow[rr, hookrightarrow] & & X. \\
&&G\times Z \arrow[rru] &&
\end{tikzcd}\end{equation*}
So the embedding  of $BH/H$ into $X_0$ is the same as the embedding of the torus $S$ into the toric variety $Z$, up to taking the product with a copy of affne space. Furthermore the embedding of $G/H$ into $X$ is Zariski locally described by the embedding of $S$ into $Z$, up to taking the product with affine space. This leads us to the following theorem.

\begin{proposition}
    Given a $G/H$-embedding $X$, the open embedding $G/H\hookrightarrow X$ is a toroidal embedding if and only if $X$ is a toroidal spherical embedding.
\end{proposition}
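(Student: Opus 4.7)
The plan is to establish both directions separately via Theorem~\ref{thm_LocalStructureTheorem}.

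For the direction ($\Leftarrow$), assume $X$ is a toroidal spherical embedding. For any $x \in X$, since $GX_0 = X$ (where $X_0 = X \setminus \bigcup_{D \in \calD(G/H)} \overline{D}$), there is $g \in G(k)$ with $x \in gX_0$. After translating by $g$, it is enough to produce a toric chart at a point of $X_0$. By Theorem~\ref{thm_LocalStructureTheorem}, $X_0 \cong P_u \times Z$ for a toric variety $Z$ with big torus $S$, and this isomorphism identifies the open $B$-orbit $BH/H$ with $P_u \times S$. The key point is that $BH/H = G/H \setminus \bigcup_{D \in \calD(G/H)} D$ (the complement of the colors in $G/H$), which coincides with $G/H \cap X_0$. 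Hence under the local isomorphism, $G/H \cap X_0$ matches $P_u \times S \subset P_u \times Z$. Using $P_u \cong \A^r$, the projection $P_u \times Z \to Z$ has the property that the preimage of the toric boundary of $Z$ is exactly the boundary of $X$ inside $X_0$, and a standard factorization of smooth morphisms through \'etale maps to affine bundles refines this to the required \'etale toric chart.

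For the direction ($\Rightarrow$), I argue by contrapositive: suppose $\frakF(X)$ contains a color, i.e.\ a cone $(\sigma, \calF) \in \frakF(X)$ with some $D \in \calF$. By Theorem~\ref{thm_OrbitFan} the corresponding $G$-orbit $Y$ is contained in $\overline{D}$, and $\overline{D}$ is a $B$-invariant but not $G$-invariant Weil divisor passing through any point $y \in Y \cap \overline{D}$. The aim is to show that no toroidal chart can exist at $y$. The condition $\gamma^{-1}(T) = \delta^{-1}(G/H)$ in a chart $\gamma\colon U \to Z$ identifies the irreducible components of the local boundary of $X$ through $y$ with toric boundary divisors of $Z$, and these correspond to $G$-invariant divisors of $X$ (as codimension-one $G$-orbit closures). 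The $B$-invariant but non-$G$-invariant divisor $\overline{D}$ through $y$ cannot be matched by any such $G$-invariant boundary component of the toric chart, yielding a contradiction.

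The main obstacle will be making the converse direction rigorous, since an \'etale toric chart has no \emph{a priori} compatibility with the $B$-action. The cleanest route is to invoke the equivalent characterization from \cite{BrionKumar}: $G/H \hookrightarrow X$ is toroidal spherical if and only if every $B$-invariant Cartier divisor meeting $X \setminus G/H$ is $G$-invariant. The forward implication of this characterization follows from the local structure theorem, and the converse reduces the problem to a local analysis of $B$-semi-invariant local equations in a toric chart, using that the $B$-semi-invariant regular functions on $BH/H$ generate the local ring near $y$ up to toric monomial factors.
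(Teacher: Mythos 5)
Your forward direction ($\Leftarrow$) is essentially the paper's argument: invoke the local structure theorem to reduce to $P_u\times S\hookrightarrow P_u\times Z$ and observe that $Z$ is a toric variety. The extra discussion about factoring smooth morphisms through \'etale maps to affine bundles is unnecessary, since a product with $\A^r$ already gives a Zariski-local toric chart, but this is a cosmetic point.

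The backward direction ($\Rightarrow$) has a genuine gap. You argue that the $B$-invariant but non-$G$-invariant divisor $\overline{D}$ through $y\in Y$ ``cannot be matched by any $G$-invariant boundary component of the toric chart, yielding a contradiction.'' But this is a non sequitur: $\overline{D}$ is not a component of the boundary $X\setminus G/H$ (since $\overline{D}\cap G/H=D\neq\emptyset$), so there is nothing to match it with. A toroidal chart only constrains the \emph{boundary} divisors to look like toric boundary divisors; it says nothing about general divisors through $y$, and a toric variety certainly has plenty of non-toric divisors passing through torus-fixed points. No contradiction arises.

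The paper's argument is genuinely different: if a color $D$ lies in $\calF$ for a colored cone of $\mathfrak{F}(X)$, one restricts to the simple open subembedding $X'$ determined by the 1-dimensional colored cone $(\rho,\calF)$ with $\rho=\R_{\geq 0}\varrho(\val_D)$. Because the unique ray of $\rho$ is accounted for by the color (not by a $G$-invariant boundary divisor), $X'$ has \emph{no} $G$-invariant boundary divisor, so the closed orbit $Y=X'\setminus(G/H)$ has codimension $\geq 2$. But the boundary of a KKMSD-toroidal embedding must be pure codimension one (\'etale-locally a toric boundary), contradiction. This is a codimension count, not a ``color cannot be matched'' argument, and it is the missing idea in your proposal. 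Your fallback suggestion to use the Brion--Kumar characterization of toroidal spherical embeddings still leaves the actual comparison with KKMSD-toroidality to ``a local analysis of $B$-semi-invariant local equations,'' which is precisely the part that needs an argument and is not supplied.
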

\begin{proof}
    As was discussed above, it follows from \ref{thm_LocalStructureTheorem} that if $X$ is spherically toroidal then  $G/H\hookrightarrow X$ is Zarski locally of the form $P_u\times S \hookrightarrow P_u\times Z$ and $Z$ is a toric variety with open torus $S$ so the embedding $S\hookrightarrow Z$ is a torus embedding.
    
    On the other hand, assume $X$ is \textit{not} spherically toroidal, we will show $X$ is \textit{not} toroidal. Let $D \in \calD$ be a color such that $D$ is contained in the colors of some cone of $\mathfrak{F}(X)$. Let $\rho$ be the ray generated by $\val_D$ in $\N_{\R}(G/H)$, and let $(\rho, \calF)$ be the colored cone of $\mathfrak{F}(X)$ whose underlying cone is given by $\rho$. Then $(\rho, \calF)$ defines a simple spherical embedding $G/H\hookrightarrow X'$, where $X'$ has exactly two $G$-orbits, the open $G$-orbit and a closed $G$-orbit which we denote $Y$. By assumption $\overline{D}\subseteq Y$ and $D$ have codimension 1 in $G/H$ as well as $Y\cap G/H=\emptyset$ and so $Y$ must have codimension 2. But $Y=X'\setminus G/H$ and so $G/H\hookrightarrow X'$ is not toroidal. Thus, $G/H\hookrightarrow X$ is not toroidal as being toroidal is an étale local condition and $X'$ is an open subembedding of $X$.
\end{proof}


\section{Spherical and toroidal tropicalization}\label{Sec_SpherTrop}

In this subsection we will review spherical tropicalization and use our results from the previous section to prove the following theorem from the introduction.

\begin{manualtheorem}{C}\label{thm_sphertroplogtrop}
Let $G$ be a connected reductive group and let $X$ be a spherical $G$-variety with open $G$-orbit $G/H$. Assume the embedding $G/H\hookrightarrow X$ is a toroidal embedding. Then we have that the spherical tropicalization map and the tropicalization map for toroidal embeddings agree. To be precise, $J_G(\Sigmabar_X^G)=J(\Sigmabar_X)$ and the retraction maps $\bfp$ and $\bfp_G$  are the same.
\end{manualtheorem}

In \cite{TevelevVogiannou} the tropicalization map for spherical homogeneous spaces was introduced, building on work of \cite{LunaVust}. Let $G/H$ be a spherical homogeneous space and let $\calV$ be the valuation cone. We will define the tropicalization map as a continuous map from the Berkovich analytification (see \cite[Theorem A]{Coles}). Recall that the first step in tropicalizing a toric varietiy is to tropicalize a torus, this extends to the toric variety as all torus orbits are isomorphic to tori. In a spherical variety all $G$-orbits are a homogeneous space, a variety isomorphic to $G/H$ for some $H$ a closed subgroup of $G$. Then the spherical tropicalization a continuous map
\[
\trop_G\colon (G/H)^{\an}\rightarrow \calV
\]
defined as follows. Let $K/k$ be a valued extension and let $\gamma\in G/H(K)$. For each $f\in k(G/H)$ there is a Zariski open $U_f\subseteq G(k)$ such that $\val_K\big(g\cdot f(\gamma)\big)$ is well-defined and constant (\cite[Subsection 3.2]{Coles}). Then define $\trop_G(\gamma)=\textrm{val}_{\gamma}$ where:
\[
\val_{\gamma}(f)=\val_K\big(g\cdot f(\gamma)\big) \,\,\,\,\, \textrm{   $g\in U_f$.}
\] These maps defined on each valued field $K$ agree with respect to valued extensions and thus define a continuous map from $G/H^{\an}$ \cite[Corollary 3.16]{Coles}. In \cite{Nashpreprint} it was shown that for a $G/H$-embedding $X$ the tropicalization map could be extended to $X$ in roughly the same the tropicalization map for a torus extends to a toric variety.
Each spherical variety $X$ has finitely many orbits and each orbit is spherical (\cite[Corollary 2.2]{Knop}). Each orbit $Y$ has an associated valuation cone $\calV(Y)$, and we will define the spherical tropicalization of $X$, denoted as $X^{\trop_G},$ to be the union of these valuations cones, topologized as follows. Let $X$ be a simple spherical embedding. Then $X$ corresponds to some colored cone $(\sigma, \calF)$ in $N_{\R}(G/H)$. Consider the space $\Hom(S_\sigma,\Rbar)$,
here $S_\sigma$ is the set of vectors in $M$ which pair with any element of $\sigma$ to a nonnegative number.
 Given $\lambda\in \Hom(S_\sigma,\Rbar)$ the set where $u$ is not equal to $\infty$ is given by $\tau^{\perp}\cap S_\sigma$ for some $\tau$ a face of $\sigma$, and so $\lambda$ defines an element of $\Hom_\R(N_{\R}(\tau),\R)$. This defines a bijection:
\[
\Hom(S_\sigma,\Rbar)\rightarrow \bigsqcup_{\textrm{$\tau$ a face of $\sigma$}}N_{\R}(\tau).
\]
Note that this is a partial compactification of $N_{\R}(G/H)$ because $N_{\R}(G/H)=N_{\R}({0})$.
By Theorem \ref{thm_OrbitFan} the face vector space $N_{\R}(\tau)$ is equal to $N_{\R}(Y)$ where $Y$ is the $G$-orbit corresponding to $Y$, and so $\calV(Y)\subseteq N_{\R}(\tau)$. So $X^{\trop_G}\subseteq \Hom(S_\sigma,\Rbar)$ and we give $X^{\trop_G}$ the subspace topology. When $X$ is not necessarily simple, we can write it as a finite union of simple subembeddings, the intersection of which is another simple subembedding, and we glue the tropicalizations accordingly. So $X^{\trop_G}$ is a collection of cones glued to give a partial compactification of $\calV$.

As with tori there is a section of the tropicalization map $\calV \hookrightarrow G/H^{\an}$, given by sending a valuation $\val$ to the points $(\eta_G,\val)\in G/H^{\an}$, when $X$ has multiple $G$-orbits these maps glue to a section $J_G\colon X^{\trop_G}\hookrightarrow X^{\an}$. The composition $J_g\circ \trop_G$ defines a retraction map $\bfp_G\colon X^{\an}\rightarrow X^{\an}$ and by \cite[Theorem A]{Coles} this map is given by $p\mapsto \bfg \ast p$ where $\bfg$ is the Shilov bpundary point of $G^{\beth}$. This description of tropicalization will be central to our later computations.
We refer the reader to \cite{NashExtendedVIaToric} and \cite{Coles} for a more in-depth discussion.

\begin{remark}\label{Rem_SpherTropFunctorial}
    Note that if $\xi\colon G_1 \rightarrow G_2$ is a group homomorphism and $X_i$ is a spherical $G_i$-variety, then spherical tropicalization is functorial in the sense that there is a continuous map $\trop(\xi)\colon \Trop_{G_1}(X_1)\rightarrow \Trop_{G_2}(X_2)$. The map $\trop(\phi)$ is induced by the map $X_1^{\an}\rightarrow X_2^{\an}$ given by $p\mapsto \bfg^ 2 \ast \xi^{\an}(p)$, here $\bfg^2$ is the Shilov boundary point of $G_2^{\beth}$. Note that this agrees with tropicalization of morphisms of spherical varieties as introduced in \cite[Section 3]{NashExtendedVIaToric}.
\end{remark}

Again in direct analogy with toric varieties, we can restrict the spherical tropicalization map from $X^{\an}$ to $X^{\beth}$ and obtain a map onto a subset of $X^{\trop_G}$ which we denote $\Sigmabar_X^G$. The composition $J_G\circ \trop_G\colon X^{\beth}\rightarrow X^{\beth}$ gives the retraction $\bfp_G$ and the image is a compact subset of $X^{\beth}$. The space $\Sigmabar_X^G$ is related to the canonical compactification of the colored fan of $X$.
We will describe $\Sigmabar_X^G$ in the case where the open $G$-orbit of $X$ is $G/H$ and the embedding $G/H\hookrightarrow X$ is toroidal, as this is our main case of interest. First, consider the case when $X$ is simple, i.e. there is one closed $G$-orbit. Then $X$ as above corresponds to a strictly convex cone $\sigma\subseteq N_{\R}(G/H)$, that is supported in $\calV$. In this case we will have that the image of $\trop_G$ is the set $\lambda\in \Hom(S_\sigma, \Rbar )$ such that $\lambda(m)\geq 0$ for any $m\in S_\sigma$, in fact the image of $\trop_G$ is exactly $\sigmabar= \Hom(S_\sigma,\Rbar_{\geq 0})$ by \cite[Theorem B]{Coles}. We then glue the cones $\sigmabar$ along their intersections as in the case of a toroidal embedding. To be precise, if $X_1$ and $X_2$ are two simple subembeddings then they intersect along a third simple subembedding $X_3$. If the cone corresponding to $X_i$ is $\sigma_i$ then $\overline{\sigma}_3$ is contained in $\overline{\sigma}_1$ and $\overline{\sigma}_2$, and in $\Sigma^G_X$ we have that $\overline{\sigma}_1$ and $\overline{\sigma}_3$ are glued along $\overline{\sigma}_3$.

We will now prove the main theorem of this Section.
Let $G/H\hookrightarrow X$ be a toroidal $G/H$-embedding. Let $X_0$, $L$, $P$, $Z$, and $S$ be as in Theorem \ref{thm_LocalStructureTheorem}, and let $T$ be a maximal torus of $B$.

\begin{proof}[Proof of Theorem \ref{thm_sphertroplogtrop}]
It suffices to show that the map $\bfp$ is equal to the map $\bfp_G$. Recall that $\bfp_G(p)=\bfg\ast p$.
The embedding $G\times S \hookrightarrow G\times Z$ is a toroidal embedding and there is a dominant surjective morphism of toroidal embeddings $G\times Z\rightarrow X$ given by left multiplication by $G$. From this it follows that we have a commutative diagram:
    \begin{equation*}
    \begin{tikzcd}
    G^{\beth}\times Z^{\beth} \arrow[rr,"m^{\an}"] \arrow[d,"\bfp_{G\times Z}"] && X^{\beth} \arrow[d,"\bfp"] \\
    J(\Sigmabar_{G\times Z}) \arrow [rr, "m^{\an}"] && J(\Sigmabar_{X}).
        \end{tikzcd}
\end{equation*}
Here $m$ is the mutliplcation map $G\times Z\rightarrow X$. Let $p\in X^{\beth}$ and let $q$ be a point in $G^{\beth}\times Z^{\beth}$ such that $m^{\an}(q)=p$ and $q$ is of the form $(g,s)$, where $g\in G^{\beth}$ and $s\in Z^{\beth}$.
Then $\bfp_{G\times Z}(q)=\omega_{\bfg, \bft_{\trop_S(s)}}$ where $\omega_{\bfg, \bft_{\trop(s)}}$ is the Shilov boundary of $\calM(\bfg) \times \calM(\bft_{\trop(s)})$ and $\trop_S$ is the tropicalization map associated to the toric variety $Z$. Thus we have that $\bfp(p)= m^{\an}\big(\bfp_{G\times Z}(q)\big)= \bfg \ast \bft_{\trop_S(s)}=\bfg \ast s= \bfg \ast p$. 
\end{proof}


\section{Affine buildings, wonderful compactifications, and spherical tropicalization}

The reductive group $G$ is a spherical $G\times G$-variety with respect to the action given by $(g,h)\cdot x=gxh^{-1}$. We can describe the Luna-Vust theory as follows. For the remainder of this section fix a maximal torus $T$ in $G$ and a Borel subgroup containg $T$. Let $d\colon G\rightarrow G\times G$ be the diagonal embedding and let $d^{-}\colon G\rightarrow G\times G$ be given by $ g\mapsto (g,g^{-1})$. Then $B\times B^{\opp}$ has an open orbit in $G$, and it follows from \cite[Theorem 3.4.12]{PerrinSurvey} that the lattice $M(G)$ can be identified with characters on $T\times T$ that are trivial on $d^{-}(T)$. So $M(G)$ can be canonically identified with characters on $d(T)$. So $N_{\R}(G)$ is $T^{\trop}$ and it also follows from \cite[Theorem 3.4.12]{PerrinSurvey} that the valuation cone $\calV$ in $N_{\R}(G)$ can be identified with the negative of the  Weyl chamber in that corresponds to $B$. So wonderful compactifications (up to $G\times G$-equivariant isomorphism) are classified by fans in $T^{\trop}$ whose support is contained the Weyl chamber determined by $B$.

 As discussed in the previous subsection we then have a spherical tropicalization map $G^{\an}\rightarrow \calV$ and there is a retraction map $\bfp_{G \times G}\colon X^{\an}\rightarrow J_{G\times G}(\calV)$ given by $p\mapsto \bfg \ast p \ast \bfg $. Before we discuss the relationship with the building we will pause to mention how this map relates to the \emph{wonderful compactification} of $G$. 

In \cite{deConciniProcesi} de Concini and Procesi use representation-theoretic methods to construct a compactification of a semisimple algebraic group with trivial center (an adjoint group) which has particularly nice properties. This compactifications is called the \emph{wonderful compactification} of $G$. We refer the reader to \cite{BrionKumar} for more background and generalizations of this construction.

In this artcile, a \emph{wonderful compactification} of $G$ is a toroidal compactification of $G$ that is equivariant with respect to the action of $G\times G$ on $G$ defined by $(g,h)\cdot x=gxh^{-1}$. 

\begin{remark}
Our terminology is a more general use of the term `wonderful compactification'. In particular a wonderful compactification will neither be smooth nor unique.
Most authors assume the group is semisimple so there is a unique final wonderful compactification, or even more they assume that the group is of adjoint type, so there that the unique final wonderful compactification is smooth.
\end{remark}

Let $\Delta$ be a fan in $\calV$ which covers $\calV$ and let $X$ be the associated Weyl compactification. The compactification $X$ gives rise to a tropicalization map for $G$. There is a tropicalization map $\trop_{won} \colon X^{\beth}\rightarrow \overline{\Sigma}_{X}$ induced by the fact that $G\hookrightarrow X$ is toroidal. In particular there is a map $\trop_{\won} \colon G^{\an}\rightarrow \Sigma_{X}$, but by Theorem \ref{thm_sphertroplogtrop} this map agrees with the spherical tropical map for $G$. The induced retraction $\bfp\colon G^{\an}\rightarrow G^{\an}$ is equal to the map $\bfp_{G\times G}$ which is given by $p\mapsto \bfg \ast p \ast \bfg$ and $\Sigma_{X}$ is the image of the valuation cone $\calV$ in $G^{\an}$. In particular, while the construction of particular wonderful embedding depends on a choice of maximal torus, the tropicalization map induced by a wonderful compactification only depends on the action of $G\times G$ on $G$.

The cone $\calV$ is a fundamental domain for the action of the Weyl group $W$ on $T^{\trop}$ so one could describe $\calV$ as a quotient of $T^{\trop}$ by $W$. Recall that we can also write $G^{\build}$ as a quotient of the product $G(k)\times T^{\trop}$. 
Then there is a map $\pi \colon G^{\build}\rightarrow \calV$ given by $(g,\lambda )\mapsto \lambda \textrm{ mod $W$}$. 
Recall that we also have a map $\trop_{\build}\colon \fD \rightarrow G^{\build}$ where $\fD\subseteq G^{\an}$.

\begin{manualtheorem}{D}\label{thm_tropbuildvswonderfultropsec9}
The map $\pi\colon G^{\build}\rightarrow \calV$ is well-defined, independent of the choice of $T$, and its restriction to the Weyl chambers in $G^{\build}$ is a linear isomorphism. Moreover, the diagram
        \begin{equation}\label{eq_factorizationsec9}\begin{tikzcd}
            & \fD \arrow[ld, "\trop_{\build}"'] \arrow[r,"\subseteq"]&G^{\an}\arrow[rd,"\trop_{G\times G}"]& \\
            G^{\build}_{\Berk}\arrow[rrr, "\pi"]  & && \calV 
\end{tikzcd}\end{equation} 
commutes and the factorization \eqref{eq_factorization} is functorial with respect to homomorphisms of reductive groups. To be precise, given $\xi: G_1\rightarrow G_2$ be a morphism of algebraic groups then the following diagram commutes:
     \begin{equation*}
\begin{tikzcd}
G_1^{\build }\arrow[d,"\xi^{\build}"'] \arrow[rr,"\pi"] & & \calV_1\arrow[d,"\trop(\xi)"]\\
G_2^{\build} \arrow[rr,"\pi"] & & \calV_2.
\end{tikzcd}
\end{equation*}
\end{manualtheorem}

\begin{proof}
We have that $\pi$ is well-defined because if $(g,\lambda)\sim (h,\mu)$ then $\lambda \textrm{ mod $W$} = \mu \textrm{ mod $W$} $. To see that $\pi$ is independent of the choice of $T$ consider that we have sections of $G^{\build}_{\Berk}$ and $\calV$ in $G^{\an}$. In fact we have a commutative diagram: 
     \begin{equation*}
\begin{tikzcd}
\Theta(G^{\build}_{\Berk})\arrow[rr,"\tilde{\pi}"] & & J(\calV )\\
G_{\Berk}^{\build}\arrow[u,"\Theta"]  \arrow[rr,"\pi"] & & \calV \arrow[u,"J"].
\end{tikzcd}
\end{equation*}
and the map $\tilde{\pi}$ is given by the map $p\mapsto \bfg \ast p$. Furthermore, this shows that Diagram \ref{eq_factorizationsec9} commutes, because $\Theta\circ \trop_{\build}$ is given by $p \mapsto p \ast \bfg$ and $J\circ \trop_{G\times G}$ is given by $p\mapsto \bfg \ast p \ast \bfg$.

    For the second diagram, we fix maximal tori $T_i\subseteq G_i$ and let $W_i$ be the corresponding Weyl group. Let $(g,\lambda)\in G_1^{\build}$ then $\trop(\xi)(\pi(g,\lambda))=\trop(\xi)(\lambda \textrm{ mod $W_1$})=\xi(\lambda) \textrm{ mod $W_2$}$, here $\xi(\lambda) $ is the image of $\lambda$ under the map $T_1^{\trop}\rightarrow T_2^{\trop}$. On the other hand we have $\pi\big(\xi^{\build}(g,\lambda)\big)=\pi\big(\xi(g),\xi(\lambda)\big)=\xi(\lambda) \textrm{ mod $W_2$}$.
\end{proof}

To end this section we would like to display how the maps introduced differ from the tropicalization map $\trop\colon T^{\an}\rightarrow T^{\trop}$. For a torus we have that $T^{\an}/T^{\beth}=T^{\build}=T^{\beth}\backslash T^{\an}/T^{\beth}=T^{\trop}$, for a general connected reductive group we have the following commutative diagram:

\begin{equation*}\begin{tikzcd}
G^{\an}\arrow[rr, "p\mapsto pG^{\beth}"] \arrow[bend right=80, rrdddd, "\trop_{G\times G}",swap] & & G^{\an}/ G^{\beth} \\
 & & \\
\fD \arrow[uu,hookrightarrow] \arrow[ddrr, "\trop_{G\times G}"] \arrow[rr,"\trop_{\build}"] & & G^{\build} \arrow[dd, "\pi"] \arrow[uu,"(g \textrm{,} \lambda)\mapsto gt_{\lambda}G^{\beth}",swap, hookrightarrow]\\ 
 & & \\
& & \calV \\
\end{tikzcd}\end{equation*} 
when $G=T$ all vertical maps are the identity map and  the maps left to right are $\trop$.


\bigskip

\part{Complements and examples}
\bigskip

\section{Bundles on chains of projective lines and their tropicalization}\label{section_bundles}

In \cite{MartensThaddeus}, Martens and Thaddeus propose a family of modular compactifications of an arbitrary connected reductive group $G$ that parametrize certain twice framed equivariant bundles on chain of projective lines. Special instances of this construction include the wonderful compactification, when $G$ is of adjoint type, all toric orbifolds, when $G=T$, and Kausz' compactification of $\GL_n$ \cite{Kausz}. In this section we expand \cite{MartensThaddeus} and study a tropicalization procedures for $G$ employing this moduli-theoretic perspective.

\subsection{Compactification of reductive as moduli stacks of bundles}
Before we recall the Martens--Thaddeus construction from \cite{MartensThaddeus}, we remind the reader of the following two facts as background:
\begin{enumerate}
    \item Let $X=X_1\cup\cdots \cup X_{k+1}$ be a nodal chain of $k+1$ projective lines starting at a $\G_m$-invariant smooth point $p_0$ of $X_1$ and ending in a $G_m$-invariant smooth point $p_\infty$ of $X_{k+1}$. Denote the nodes by $p_1,\ldots, p_k$. A principal $G$-bundle $E$ on $X$ is said to be \emph{rationally trivial}, if it is generically trivial on each component. Note that every principal $G$-bundle is rationally trivial, e.g. when $k$ is of characteristic zero. By \cite[Theorem 4.3]{MartensThaddeus_BirkhoffGrothendieck}, every rationally trivial principal $G$-bundle $E$ admits a reduction to the torus $T$. When $E$ is $\G_m$-equivariant, so is its reduction to $T$.
    \item A $\G_m$-equivariant principal $T$-bundle on $X$ is uniquely determined by cocharacters in $N$, one at each $p_0, p_1,\ldots, p_k,p_\infty$. In particular, by setting $\alpha_0=0$ and $\alpha_\infty=0$, there is a natural one-to-one correspondence between $\G_m$-equivariant principal $T$-bundle on $X$ with trivial $\G_m$-equivariant structure at $p_0$ and $p_\infty$ and $k$-tuples $(\alpha_1,\ldots, \alpha_k)$ consisting of cocharacters $\alpha_i\in N$ for $i=1,\ldots, k$. 
\end{enumerate}

Let $(\Delta_C,\Phi)$ be a simplicial stacky fan in the Weyl chamber $C$, where $\Phi$ is given by the choice of an integral element $\beta_\rho\in \rho\cap N$ for every ray $\rho$ of $\Delta$. Choose an order of the one-dimensional cones of $\Delta$, so that we may write $\big\{\sigma(1),\ldots, \sigma(k)\big\}$ for the ordered set of one-dimensional face of $k$-dimensional cone $\sigma\in\Delta_C$. 

Denote by $\calM_G(\Delta_C,\Phi)$ the category fibered in groupoids, whose fiber over a scheme $S$ consists of the following data:
\begin{enumerate}
    \item A flat and proper morphism $X\rightarrow S$, whose fibers are nodal chains of projective lines with two smooth $\G_m$-invariant sections $p_0$ and $p_\infty$.
    \item A $\G_m$-equivariant principal $G$-bundle $E$ that is rationally trivial (i.e. generically trivial on every component) and has trivial $\G_m$-equivariant structure at $p_0$ and $p_\infty$.
    \item A frame of $E_0$ and $E_\infty$ at the two $\G_m$-invariant smooth sections $p_0$ and $p_\infty$ of $X$.
\end{enumerate}
This data is assumed to be \emph{$(\Delta_C,\Phi)$-stable}, meaning that in every fiber there is a unique $k$-dimensional cone $\sigma\in\Delta$ such that $X$ is a nodal chain of $k+1$ projective lines and $E$ admits a reduction to $T$ that is isomorphic to $E\big(\beta_{\sigma(1)},\ldots, \beta_{\sigma(k)}\big)$. 

Then, \cite[Theorem 4.2]{MartensThaddeus} tells us that $\calM_G(\Delta_C,\Phi)$ is a separated Deligne-Mumford stack of finite type over $k$ that admits a $G\times G$-operation with a dense orbit isomorphic to $G$. Moreover, it is proper over $k$ if and only if $\vert\Delta_C\vert=C$ and the complement of the open and dense orbit $G$ has normal crossings by \cite[Proposition 3.7]{MartensThaddeus}.
We denote by $M_G(\Delta_C)$ the coarse moduli space of $\calM_G(\Delta_C,\Phi)$ (which does not depend on $\Phi$). By \cite[Corollary 8.2]{MartensThaddeus} this $M_{G}(\Delta_C)$ is projective if and only if the support of $\Delta_C$ is $C$ and $\Delta_C$ is a normal fan (see \cite{CoxLittleSchenk_toric}). 

\begin{example}\label{example_wonderfultropicalization}
    Suppose that $G$ is semisimple and with trivial center, i.e. of adjoint type. Then, if we choose the Weyl chamber $C$ itself as the unique maximal cone a fan, the associated moduli space $\calM_G(C)$ is the \emph{wonderful compactification} of de Concini--Procesi \cite{deConciniProcesi}. See \cite[Section 9]{MartensThaddeus} for further details on this relationship.
\end{example}

\begin{example}
    Suppose now that $G=T$ is an algebraic torus. Then the associated root system is trivial and the Weyl chamber $C$ is equal to $N_\R$. For a stacky fan $(\Delta,\Phi)$ the compacitification $\calM_G(\Delta,\Phi)$ the toric orbifold associated to $(\Delta,\Phi)$. 
\end{example}

\subsection{Toroidal tropicalization}
Let again $(\Delta_C,\Phi)$ be a simplicial stacky fan in the Weyl chamber $C$, where $\Phi$ is given by the choice of an integral element $\beta_{\rho}\in \rho\cap N$ for every one-dimensional face $\rho$ of $\Delta_C$. By \cite[Proposition 10.1]{MartensThaddeus}, the coarse moduli space $M_{G}(\Delta_C)$ is a spherical toroidal embedding with cone complex $\Sigma$. Thus, we have a natural toroidal tropicalization map 
\begin{equation*}
    \trop_{G,\Delta_C}\colon M_G(\Delta_C)^{\beth}\longrightarrow\overline{\Sigma} \ .
\end{equation*}
When we choose $\Delta_C$ such that it covers $C$ this tropicalization map agrees with the spherical tropicalization map on the dense open subset $G^{\an}$ by Theorem \ref{mainthm_sphertroplogtrop}. 

We now use the data given by the Kummer morphisms $\Phi$ to construct a root stack $M_G(\Sigma,\Phi)$ that makes the diagram
\begin{equation*}
    \begin{tikzcd}
        \calM_G(\Delta_C,\Phi)\arrow[rr] \arrow[d]& & M_G(\Delta_C,\Phi)\arrow[d]\\
        \calM_G(\Delta_C)\arrow[rr] & & M_G(\Delta_C)
    \end{tikzcd}
\end{equation*}
commute. We obtain a stacky toroidal tropicalization map 
\begin{equation*}
    \trop_{G,\Delta_C,\Phi}\colon \calM_G(\Delta_C,\Phi)^{\beth}\longrightarrow \overline{\widetilde{\Sigma}}
\end{equation*}
as explained in Section \ref{section_stackycc&toroidal} above. In view of Example \ref{example_wonderfultropicalization}, we sometimes refer to $\trop_{G,\Delta_C,\Phi}$ as the \emph{wonderful tropicalization}. Note that this map agrees with the spherical tropicalization map (in the non-stacky case) by Theorem \ref{thm_sphertroplogtrop}.

\subsection{Modular tropicalization}
In the spirit of \cite{ACP} we may give a moduli-theoretic interpretation of the wonderful tropicalization map, in particular this is a moduli-theoretic example of spherical tropicalization.

\begin{definition}
    A \emph{metric chain} $\Gamma$ consists of a chain graph $H=(V,E)$ starting at a vertex $v_1$ and ending at a vertex $v_{k+1}$ connected by edges $e_{1},\ldots, e_{k}$ and $k-2$ inner vertices $v_2,\ldots, v_{k-1}$ together with an edge length function $\ell\colon E(H)\rightarrow \R_{\geq 0}$.
\end{definition}

If we allow the edge lengths $\ell$ to take values in $\Rbar=\R\sqcup\{\infty\}$, we refer to $\Gamma$ as a \emph{pseudo-metric chain}. 

\begin{definition}
    A \emph{$(\Delta_C,\Phi)$-decoration} on a (pseudo)-metric chain $\Gamma$ is given by associating to every vertex $e$ of $\Gamma$ a vector $\beta(e)\in\{\beta_\rho\mid \rho\in\Delta_C(1)\}$ that preserves the order on the rays of $\Delta_C$ and such that there is a (automatically unique) cone $\sigma\in \Delta_C$ such that $\beta(e_i)=\beta_{\sigma(i)}$ for all $1\leq i\leq k$. 
\end{definition}

We refer to the datum $\big(H,\beta(e_1),\ldots, \beta(e_k)\big)$ as the \emph{combinatorial type} of a $(\Delta_C,\Phi)$-decorated (pseudo)-metric chain. 

Given $(\Delta_C,\Phi)$ as above, we may interpret $\Delta_C$ as a parameter space for metric $(\Delta,\Phi)$-decorated metric chains. To be precise, the point in $\Delta_C$ associated to $\beta(\Gamma,\beta(e_1),\ldots, \beta(e_k)\beta)$ is given by $\ell(e_1)\cdot \beta(e_1) + \cdots + \ell(e_k)\cdot \beta(e_k)\in \sigma$, where $\sigma\in \Delta_C$ is the unique cone that corresponds to the combinatorial type of $\big(\Gamma,\beta(e_1),\ldots, \beta(e_n)\big)$. This identification immediately extends to an identification of the canonically compactified $\overline{\Delta}_C$ as a parameter space for $(\Delta_C,\Phi)$-decorated pseudo-metric chains. 

Under this identification the wonderful tropicalization admits a modular interpretation as follows: A point $x\in \calM_G(\Delta_C,\Phi)^\beth$ can be represented by a point in $\calM_G(\Delta_C,\Phi)(R)$ where $R$ is the valuation ring of a non-Archimedean extension $L$ of $K$, which we may assume to be algebraically closed. This datum in particular gives rise to a flat and proper family $X$ of chains of nodal curves over $R$ together with a $\G_m$-equivariant principal $G$-bundle $E$ that is rationally trivial and $(\Delta_C,\Phi)$-stable. This means that the restriction $E_s$ of $E$ to the special fiber $X_s$ of $X$ admits a reduction to $E(\beta_{\sigma(1)}, \ldots, \beta_{\sigma(k)})$ and, thus, determines a cone $\sigma\in\Delta$ (generated by $\sigma(1),\ldots, \sigma(k)$). \'Etale locally around every node $p_i$ of $X_s$ (for $i=1,\ldots, k$) the family $X$ may be written as $xy=r_i$ and we set $l(e_i)=\val_R(r_i)$. Then $\trop_{G,\Delta_C,\Phi}(x)$ is precisely the point in $\sigma$ corresponding to the metric chain with edge lengths $\ell(e_1),\ldots, \ell(e_k)$.

\begin{remark}
Suppose that $G=\GL_n$. Then the datum of a $(\Delta_C,\Phi)$-decoration on a metric chain $\Gamma$ is the same as a vector bundle on $\Gamma$ (in the sense of \cite{GrossUlirschZakharov}) together with a fixed choice of transition maps on the star cover of $\Gamma$. We refer the reader to \cite[Section 2.3]{GrossUlirschZakharov} for further details on this terminology.
\end{remark}

\section{The \texorpdfstring{$A_n$}{An} case and Goldman-Iwahori space}
\label{section_examples}
  
\subsection{Goldman-Iwahori space} Let $V$ be an $n$-dimensional $k$-vector space.  Recall that a \emph{non-Archimedean norm} on $V$ is a map 
 \begin{equation*}
\gnor \colon V\rightarrow{\R} 
 \end{equation*}
 subject to the following axioms:
 \begin{enumerate}
 \item $\nor{v}\geq 0$ for all $v\in V$ and $\nor{v}=0$ if and only if $v=0$;
 \item $\nor{\gamma v} =\nor{\gamma} \cdot \nor{v}$ for all $v\in V$ and $\gamma\in k$;
 \item $\nor{v+w}\leq \max\{\nor{v},\nor{w}\}$ for all $v,w\in V$. 
 \end{enumerate}

 \begin{definition}
The \emph{Goldman-Iwahori space} associated is the set $\calN(V)$ of all non-Archimedean norms on the dual space $V^\ast$, endowed with the weakest topology that makes all evaluation maps
  \begin{equation*}\begin{split}
  \ev_v\colon \calN(V)&\longrightarrow \R\\
  \gnor &\longmapsto \nor{v}
\end{split}  \end{equation*}
for $v\in V^\ast$ continuous. 
 \end{definition}

Given a surjective linear map $f\colon V\rightarrow W$ of finite-dimensional $k$-vector spaces, there is an induced continuous map $\calN(V)\rightarrow\calN(W)$ given by
\begin{equation*}
    \gnor \longmapsto \gnor \circ f^\ast \ ,
\end{equation*}
where $f^\ast$ denotes the injective linear map that is dual to $f$. In particular, there is a natural continuous operation of $\GL(V)(k)$ on $\calN(V)$.
Moreover, when $L$ is a non-Archimedean extension of $k$,  there is a natural continuous and surjective restriction map $\calN(V_L)\rightarrow \calN(V)$. 
 
 \begin{example}
 Let $\underline{e}=(e_1,\ldots, e_n)$ be a  
 basis of $V^\ast$ and $\lambda=(\lambda_1,\ldots, \lambda_n)\in\R^n$. Then the association 
 \begin{equation*}\begin{split}
 \gnor_{\underline{e},\lambda}\colon V^\ast&\longrightarrow \R^n\\
 u=a_1e_1+\ldots+a_ne_n&\longmapsto \max_{i=1,\ldots, n}{\nor{a_i} e^{-\lambda_i}}
 \end{split}\end{equation*}
 is a non-Archimedean norm on $V^\ast$. 
 \end{example}

 In fact, when $k$ is spherically complete (e.g.\ when it carries the trivial valuation), every non-Archimedean norm on $V^\ast$ is of the form $\gnor_{\underline{e},\lambda}$ (so that $\GL(V)$ acts transitively on $\calN(V)$). Note that a choice of a basis $\underline{e}$ modulo rescaling of the basis is equivalent to choosing a maximal torus $T$ and there is a map from $\alpha\colon \GL(V)^{\build} \rightarrow  \calN(V)$ given by mapping the point $\bft_{\lambda}\in T^{\trop}\subseteq \GL(V)^{\build}$ to the norm  $\gnor_{\underline{e},\lambda}$.

 \begin{proposition}\label{prop_buildingtoGoldmanIwahori}
 The map $\alpha$ is well-defined and determines a continuous bijection from $\GL(V)^{\build}_{\Berk}$ to the Goldman-Iwahori space $\calN(V)$.
 \end{proposition}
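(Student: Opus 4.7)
The plan is to first extend $\alpha$ by $G(k)$-equivariance via $\alpha(g, \lambda) := g \cdot \gnor_{\underline{e}, \lambda}$, where $\GL(V)(k)$ acts on $\calN(V)$ through $(g \cdot \gnor)(v) := \gnor(g^\ast v)$. I would then verify that this factors through the equivalence $\sim$ on $G(k) \times T^{\trop}$: if $g\bft_\lambda g^{-1} = h\bft_\mu h^{-1}$ in $G^{\an}$, then the tori $gTg^{-1}$ and $hTh^{-1}$ must coincide (as the minimal torus whose analytification contains the given point), so $h^{-1}g \in \textrm{Norm}_G(T)(k)$ with Weyl image $w$ satisfying $w\cdot \lambda = \mu$. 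Over the trivially valued field $k$, elements of $\textrm{Norm}_G(T)(k)$ act on a norm that is diagonalizable in $\underline{e}$ purely by the induced Weyl action on its spectrum (any scalar factors have norm one), and hence $g \cdot \gnor_{\underline{e}, \lambda} = h \cdot \gnor_{\underline{e}, \mu}$. Independence from the choice of maximal torus follows by a similar conjugation argument.

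\textbf{Step 2: Bijectivity.} Because $k$ is trivially valued and therefore spherically complete, the classical Goldman-Iwahori theorem tells us that every norm on $V^\ast$ admits an orthogonal decomposition, i.e., is of the form $\gnor_{\underline{e}', \lambda}$ for some basis $\underline{e}'$. Since $\GL(V)(k)$ acts transitively on bases, we may write $\underline{e}' = g^\ast \underline{e}$ for some $g \in G(k)$, whence $\gnor = \alpha(g, \lambda)$, which gives surjectivity. For injectivity I would rewrite $\alpha(g, \lambda) = \alpha(h, \mu)$ as $\gnor_{\underline{e}, \lambda} = (g^{-1}h) \cdot \gnor_{\underline{e}, \mu}$ and observe that a single norm is then diagonal in two bases related by $g^{-1}h$. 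Uniqueness of the diagonalizing line decomposition (again a consequence of spherical completeness) forces $g^{-1}h \in \textrm{Norm}_G(T)(k)$, and comparing spectra yields $w \cdot \lambda = \mu$ for the Weyl image $w$ of $g^{-1}h$.

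\textbf{Step 3: Continuity, and the main obstacle.} Since the topology on $\calN(V)$ is initial with respect to the evaluations $\ev_v$, proving continuity of $\alpha$ reduces to proving continuity of $\ev_v \circ \alpha\colon G^{\build}_{\Berk} \to \R$ for each $v \in V^\ast$. My plan is to exhibit a single regular function $F_v \in k[G]$ satisfying
\begin{equation*}
\ev_v\big(\alpha(g, \lambda)\big) = \nor{F_v}_{\Theta(g, \lambda)}\,;
\end{equation*}
once this identity is established, continuity is immediate, because the right-hand side is the composition of the homeomorphism $\Theta\colon G^{\build}_{\Berk} \xrightarrow{\sim} \Theta(G^{\build}_{\Berk})$ with the evaluation map $p \mapsto \nor{F_v}_p$, which is continuous by definition of the Berkovich topology on $G^{\an}$. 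Concretely, for $v = \sum_j b_j e_j$ I will take $F_v(h) := v(h v_1) = \sum_j b_j x_{j, 1}$, where $v_1$ is the first vector of the basis of $V$ dual to $\underline{e}$. The verification proceeds by computing $F_v \circ L_g = \sum_k a_k(g, v)\, x_{k, 1}$ with $a_k(g, v) = \sum_j b_j g_{jk}$, expanding each $x_{k, 1}$ on the big Bruhat cell of $\GL(V)$ as $\chi^{e_1}$ for $k = 1$ or $\chi^{e_1}\cdot X_{\alpha_{k, 1}}$ for $k > 1$, and applying the explicit formula for $\nor{\cdot}_{g \bft_\lambda \ast \bfg}$ recorded in Subsection~\ref{subsec_BTbuildings}; the root exponents telescope to give $\max_k \nor{a_k(g, v)}\, e^{-\lambda_k} = \gnor_{\underline{e}, \lambda}(g^\ast v) = \alpha(g, \lambda)(v)$. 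The hard part will be precisely this Bruhat-cell computation. I note finally that $\alpha$ is in general only a continuous bijection and \emph{not} a homeomorphism, since the Berkovich topology on $G^{\build}$ is strictly finer than the Goldman-Iwahori topology.
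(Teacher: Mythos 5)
Your proposal is essentially correct and pursues the same underlying idea as the paper, but the paper packages it far more economically. The paper's proof observes that $\alpha$ factors as a composition of three manifestly continuous maps,
\begin{equation*}
\GL(V)^{\build}_{\Berk}\xrightarrow{\ \Theta\ }\GL(V)^{\an}\longrightarrow \big(\GL(V)/P_v\big)^{\an}\subseteq \A(V)^{\an}\longrightarrow \calN(V),
\end{equation*}
where the middle arrow is induced by the orbit map $h\mapsto hv$ for a fixed nonzero vector $v\in V$ (with stabilizer $P_v$) and the last arrow restricts a multiplicative seminorm on $k[V^\ast]=\Sym V^\ast$ to the linear forms $V^\ast$. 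This delivers well-definedness and continuity simultaneously, leaving only surjectivity (spherical completeness) and injectivity (pass to a common apartment and evaluate at the $e_i$) to check. Your $F_v(h)=v(hv_1)$ is exactly the pullback of the linear functional $v$ along this orbit map, so your Step 3 is really the explicit verification that the above composition agrees with $\alpha$ — a verification the paper asserts but does not carry out, so your more hands-on version is a legitimate and in fact useful supplement.

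Two points to flag. First, the anticipated ``hard part'' of Step 3 — expanding $x_{k,1}$ in Bruhat-cell coordinates and telescoping products of root exponents — is unnecessary and introduces complications (the big-cell expansion of a general regular function is awkward). Since each $x_{k,1}$ is a $T$-eigenvector of weight $e_k$ under left translation, i.e.\ $m^\ast(x_{k,1})=\chi^{e_k}\otimes x_{k,1}$ in $k[T]\otimes_k k[G]$, and $\bfg$ is the trivial norm on $k[G]$, the definition of the $\ast$-product and the tensor seminorm gives directly $\nor{\sum\nolimits_k a_k x_{k,1}}_{\bft_\lambda\ast\bfg}=\max_k\nor{a_k}e^{-\lambda_k}$, with no Bruhat-coordinate computation at all. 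Second, your Step 1 argument that $gTg^{-1}$ and $hTh^{-1}$ ``must coincide as the minimal torus whose analytification contains the given point'' breaks down whenever $\lambda$ lies on a coroot hyperplane: then $\bft_\lambda$ sits inside a proper subtorus of $T$, hence inside infinitely many maximal tori, and one cannot conclude $h^{-1}g\in\textrm{Norm}_G(T)(k)$. This is repairable (use the minimal torus through $\bft_\lambda$, or run the common-apartment argument you already invoke for injectivity), but the composition formulation sidesteps the issue entirely, which is one of its chief advantages. Your closing remark that $\alpha$ is a continuous bijection but not a homeomorphism agrees with what the paper records elsewhere.
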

 
 \begin{proof}
Choose a non-zero vector $v\in V$. Note that $\alpha$ may be written as a composition of continuous maps
\begin{equation*}
    \GL(V)^{\build}\hooklongrightarrow \GL(V)^{\an}\longrightarrow (\GL(V)/P_v)^{\an}\subseteq \A(V)^{\an}\longrightarrow \calN(V) \ ,
\end{equation*}
given as follows: The first map is the injection of $G^{\build}$ into the Berkovich analytic space described in Section \ref{sec_buildings} above. The second arrow is given by taking the quotient by the closed subgroup fixing the vector $v$, identifying $(G/P_v)^{\an}$ with the affine space $\A(V)^{\an}$. The last arrow is given by restricting a seminorm on the coordinate ring $k[V^\ast]$ of $\A(V)$ to $V^\ast$. This shows that $\alpha$ is well-defined and continuous. The surjectivity of $\alpha$ follows from the fact that $k$ is spherically complete. 

It remains to show that $\alpha$ is injective. Let $t_\lambda\in T^{\trop}$ and $t_\mu\in (T')^{\trop}$ be two points in $G^{\build}$ such that $\alpha(t_\lambda)=\alpha(t_{\lambda'})$ in $\calN(V)$. In this case, $\alpha(t_\lambda)$ and $\alpha(t_{\lambda'})$  lie in the same apartment and, hence, we may choose $T=T'$. In other words, we may choose a basis $\underline{e}$ of $V^\ast$ such that $\gnor_{\underline{e},\lambda}=\gnor_{\underline{e},\mu}$. Evaluating the norm at the $e_i$, we therefore see that $\lambda=\mu$. 
 \end{proof}

\subsection{An explicit interpretation of our tropicalization maps} Under the above identification, the tropicalization map 
 \begin{equation*}
 \trop_{\build}\colon \fD\longrightarrow \GL(V)^{\build}
 \end{equation*} 
 has the following explicit description: A point $x$ in $\mathfrak{D}$ may be represented by a product $gh$ such that $g$ is an invertible linear map $g\colon V_L\rightarrow V_L$ for a non-Archimedean extension $L$ of $\C$ (e.g. $L=\C((t^{\R}))$ the field of Hahn series) for which there exists a $k$-linear basis $\underline{e}$ of $V$ with respect to which $g$ has the diagonal form
 \begin{equation*}
 \begin{bmatrix}
 \ell_1  & 0 & \cdots & 0\\
 0& \ddots & \ddots & \vdots\\
 \vdots & \ddots & \ddots & 0\\
 0 & \cdots &  0 & \ell_n 
 \end{bmatrix}
\end{equation*} 
with $\ell_i \in L^\ast$ for $i=1,\ldots, n$, and $h\in \GL(V)(\calO_L)$ where $\calO_L$ is the valuation ring of $L$. Writing $\vec{\lambda}=(\ell_1,\dots, \ell_n)$, the tropicalization $\trop_{\build}(x)$ of $x$ is  the non-Archimedean norm $\gnor_{\underline{e},\val({\vec{\ell}})}$ on  $V$.

Recall that the spherical tropicalization map 
 \begin{equation*}
 \trop_{\GL(V)\times \GL(V)}\colon \GL(V)^{\an}\longrightarrow \calV
 \end{equation*} 
 is given by first identifying $\calV$ with $T^{\trop}/W$ where $T$ is a fixed maximal torus in $\GL(V)$ and $W$ is the associated Weyl group. Recall that this can be identified with the cone given by $t_{\lambda}\in T^{\trop}$ such that $\lambda_1\geq \lambda_2\geq \ldots \lambda_n$.
 Then a point $x\in \GL(V)(L)$ can be written as $gth$ where $g,h\in \GL(V)(\calO_L)$ and $t\in T(L)$ is a matrix 
 \begin{equation*}
 \begin{bmatrix}
 \ell_1  & 0 & \cdots & 0\\
 0& \ddots & \ddots & \vdots\\
 \vdots & \ddots & \ddots & 0\\
 0 & \cdots &  0 & \ell_n 
 \end{bmatrix}
\end{equation*} 
where $\val(\ell_1)\geq \ldots \geq \val(\ell_n)$.
Set $\lambda_i=\val(\ell_i)$ then $\trop_{\GL(V)\times\GL(V)}(x)$ is equal to the point $t_{\lambda}\in T^{\trop}/W$ by \cite[Theorem 2]{TevelevVogiannou}. The map $\pi\colon \GL(V)^{\build}\rightarrow \calV$ is given as follows. Let $x=\gnor_{\underline{e},\lambda }$ be a point in $\GL(V)^{\build}$, then the coordinates of $\lambda$ can be permuted to form a new vector $\lambda'$ with $\lambda'_1\geq\ldots \geq \lambda'_n$ and $\pi(x)$ is the point $\bft_{\lambda'}\in T^{\trop}/W$.

\subsection{$\PGL$ and $\SL$}\label{subsection_PGLSL}
We say that two non-Archimedean norms $\gnor$ and $\gnor'$ on $V$ are \emph{equivalent} if there is a constant $c>0$ such that 
 \begin{equation*}
 \nor{u} =c\nor{u}'
 \end{equation*}
 for all $u\in V^\ast$. We write $\calP(V)$ for the quotient of $\calN(V)$ by this equivalence relation (endowed with the quotient topology). 
 
Under the bijection $\alpha$ the extended affine building $\PGL(V)^{\build}$ may be identified with $\calP(V)$ and the extended affine building $\SL(V)^{\build}\subseteq \GL(V)^{\build}$ with the subset of norm $\gnor_{\underline{e},\lambda}$ such that $\lambda_1+\cdots +\lambda_n=0$. We  visualize this situation  in the following commutative diagram
\begin{equation*}
\begin{tikzcd}
\SL(V)^{\build} \arrow[rr] \arrow[d,"\subseteq"]& & \calS(V)\arrow[d,"\subseteq"]\\
\GL(V)^{\build}\arrow[d,"\quot"] \arrow[rr,"\alpha"]&& \calN(V)\arrow[d,"\quot"]\\
\PGL(V)^{\build} \arrow[rr] && \calP(V)
\end{tikzcd}
\end{equation*}
Moreover, note that the two vertical compositions are, in fact, homoeomorphisms, since every norm $\gnor_{\underline{e},\lambda }$ is equivalent to a unique norm with $\lambda_1+\cdots +\lambda_n=0$. This homoemorphism, however, does not preserve the integral structures on each torus (see Figure \ref{fig_thefigure} for an example). This phenomenon was first observed in an analogous situation in \cite{MartensThaddeus} and, both in this article and in \cite{MartensThaddeus} motivates the use of stacky structures around the boundary.







\bibliographystyle{amsalpha}
\bibliography{biblio}{}

\appendix


\end{document}